\documentclass[11pt,leqno]{article}
\usepackage[utf8]{inputenc}

\usepackage{amsmath}
\usepackage{amsthm}
\usepackage{enumerate}
\usepackage{mathtools}
 
\usepackage[colorlinks=false]{hyperref}

\usepackage{color}

 \usepackage{tikz}
 \usetikzlibrary{arrows.meta}
 \usetikzlibrary{decorations.markings}

\makeatletter
\g@addto@macro\bfseries{\boldmath}
\makeatother

\usepackage[charter]{mathdesign}

\usepackage{xy}
\xyoption{all}

\numberwithin{equation}{section}

\theoremstyle{plain}
    \newtheorem{theorem}[equation]{Theorem}
    \newtheorem{lemma}[equation]{Lemma}
    \newtheorem{corollary}[equation]{Corollary}
    \newtheorem{proposition}[equation]{Proposition}

    \newtheorem*{theorem*}{Theorem}
    \newtheorem*{proposition*}{Proposition}
    \newtheorem*{corollary*}{Corollary}
    \newtheorem*{lemma*}{Lemma}
    \newtheorem*{conjecture*}{Conjecture}
    \newtheorem{definition-theorem}[equation]{Definition/Theorem}
    \newtheorem{definition-lemma}[equation]{Definition/Lemma}
\theoremstyle{definition}
    \newtheorem{definition}[equation]{Definition}
    \newtheorem{example}[equation]{Example}

    \newtheorem{remark}[equation]{Remark}

    \newcommand{\R}{\mathbb{R}}
    \newcommand{\C}{\mathbb{C}}
    
    \newcommand{\Z}{\mathbb{Z}}
    \newcommand{\Q}{\mathbb{Q}}
    \newcommand{\T}{\mathbb{T}}
  
    \newcommand{\germ}{\mathfrak}

   	\renewcommand{\phi}{\varphi}
	\let\epsilon\varepsilon
     
    \newcommand{\Bounded}{\operatorname{B}}
    
    \newcommand{\Compact}{\operatorname{K}}
    
    \newcommand{\Unitary}{\operatorname{U}}

\newcommand{\into}{\hookrightarrow}
\newcommand{\onto}{\twoheadrightarrow}
\newcommand{\dd}{d}  
\newcommand{\restrict}{\raisebox{-.5ex}{$|$}}

\newcommand{\id}{\mathrm{id}}

    \DeclareMathOperator{\Hom}{Hom}

    \DeclareMathOperator{\res}{res}
    \DeclareMathOperator{\Ind}{Ind}
    
    \DeclareMathOperator{\Ad}{Ad}
    \DeclareMathOperator{\Aut}{Aut}
    \DeclareMathOperator{\Rep}{Rep}
    
    \DeclareMathOperator{\lspan}{span}
	\DeclareMathOperator{\image}{image}
    \DeclareMathOperator{\SL}{SL}
    \DeclareMathOperator{\GL}{GL}
    \DeclareMathOperator{\Sp}{Sp}
    \DeclareMathOperator{\supp}{supp}
    \DeclareMathOperator{\spectrum}{spectrum}
    \DeclareMathOperator{\transpose}{t}
    \DeclareMathOperator{\order}{order}
    \DeclareMathOperator{\triv}{triv}
    \DeclareMathOperator{\sign}{sign}
    
	\newcommand{\bra}[1]{{\langle{#1}\vert}}
	\newcommand{\ket}[1]{{\vert {#1}\rangle}}
    \DeclareMathOperator{\UnitaryMultiplier}{UM}

\newcommand{\smallbmat}[1]{{\ensuremath \left[\begin{smallmatrix} #1 \end{smallmatrix}\right]}}
\renewcommand{\setminus}{-}
\renewcommand{\i}{\iota}

\makeatletter
\DeclareFontFamily{OMX}{MnSymbolE}{}
\DeclareSymbolFont{MnLargeSymbols}{OMX}{MnSymbolE}{m}{n}
\SetSymbolFont{MnLargeSymbols}{bold}{OMX}{MnSymbolE}{b}{n}
\DeclareFontShape{OMX}{MnSymbolE}{m}{n}{
    <-6>  MnSymbolE5
   <6-7>  MnSymbolE6
   <7-8>  MnSymbolE7
   <8-9>  MnSymbolE8
   <9-10> MnSymbolE9
  <10-12> MnSymbolE10
  <12->   MnSymbolE12
}{}
\DeclareFontShape{OMX}{MnSymbolE}{b}{n}{
    <-6>  MnSymbolE-Bold5
   <6-7>  MnSymbolE-Bold6
   <7-8>  MnSymbolE-Bold7
   <8-9>  MnSymbolE-Bold8
   <9-10> MnSymbolE-Bold9
  <10-12> MnSymbolE-Bold10
  <12->   MnSymbolE-Bold12
}{}

\let\llangle\@undefined
\let\rrangle\@undefined
\DeclareMathDelimiter{\llangle}{\mathopen}%
                     {MnLargeSymbols}{'164}{MnLargeSymbols}{'164}
\DeclareMathDelimiter{\rrangle}{\mathclose}%
                     {MnLargeSymbols}{'171}{MnLargeSymbols}{'171}
\makeatother

\title{Reduced $C^*$-algebras and $K$-theory for reductive $p$-adic groups}
\author{Pierre Clare, Tyrone Crisp}
\date{\empty}
 
\begin{document}

\maketitle

\begin{abstract}
We calculate the $K$-theory of the reduced $C^*$-algebra $C^*_r(G)$ of a reductive $p$-adic group $G$. To do so, we show that each direct summand in Plymen's Plancherel decomposition of $C^*_r(G)$ is Morita equivalent to a twisted crossed product for an action of a finite group on the blow-up of a compact torus along the zero-locus of a certain Plancherel density. It follows that the $K$-theory of $C^*_r(G)$ is the direct sum of the  twisted equivariant $K$-theory groups of these blow-ups, which can be computed using an Atiyah-Hirzebruch spectral sequence. As an illustration, the case of $\Sp_4$ is treated in some detail. Our main result is obtained from a more general study of $C^*$-algebras of compact operators on twisted equivariant Hilbert modules, from which we also recover results due to Wassermann for real groups, and to Afgoustidis and Aubert in the $p$-adic case, as well as a new description of the tempered dual of $G$ as a topological space.
\end{abstract}

\section{Introduction}

Let $G$ be a reductive $p$-adic group (i.e., the  group of $F$-points of a connected reductive group defined over a nonarchimedean local field $F$ of characteristic zero), and let $C^*_r(G)$ be the $C^*$-algebra generated by the left-regular representation of $G$ on $L^2(G)$. In this paper we give a new description of the algebra $C^*_r(G)$ up to Morita equivalence, and a new description of the $K$-theory of this $C^*$-algebra  that is amenable to computation. 

To help motivate this work on $p$-adic reductive groups,  let us first recall what is known in the parallel setting of \emph{real} reductive groups. In \cite{Wassermann} A.~Wassermann gave a simple description, up to Morita equivalence, of the $C^*$-algebra $C^*_r(G)$ of a real reductive group $G$, and he indicated how this computation leads to a proof of the  {Connes-Kasparov conjecture} for $G$, which identifies the $K$-theory of $C^*_r(G)$ in index-theoretic terms. (Full details of Wassermann's argument can be found in the union of \cite{CCH-1}, \cite{CHST}, and \cite{CHS}.) Several different proofs of the Connes-Kasparov conjecture  have since appeared, and one consequence of (and motivation for) the ongoing work on this problem has been to clarify the representation-theoretic content of the conjecture. It is now understood that the validity of the conjecture  is closely bound up with Mackey's observation of an analogy between the tempered dual of a real reductive group and the unitary dual of the associated Cartan motion group \cite{Higson_08,Afgoustidis-Mackey}; and also with Vogan's classification of the tempered dual in terms of minimal $K$-types \cite{BHY}.

The Connes-Kasparov conjecture is a special case of the {Baum-Connes conjecture} \cite{BCH}, which gives a formula for the $K$-theory of $C^*_r(G)$ for an arbitrary locally compact group $G$. This conjecture has been verified for many classes  of groups---including $p$-adic reductive groups, for which the conjecture (proved by V.~Lafforgue \cite{Lafforgue-CR,Lafforgue-Invent}, see also \cite{CEN_CK_almost_connected}) gives an isomorphism between the $K$-theory of $C^*_r(G)$ and the $G$-equivariant $K$-homology of the Bruhat-Tits building of $G$. The problem of elucidating the representation-theoretic content of this isomorphism for $p$-adic reductive groups has been discussed from various angles, in \cite[Section 6]{BCH}, \cite{BHP}, and \cite[Section 4.2]{ABPS}, for example; but compared to the substantial progress that has been made on the analogous problem for real groups, in the $p$-adic case there is much that remains to be understood. To quantify this difference we note that there are now at least four genuinely different proofs of the conjecture in the real case (\cite{Wassermann}, \cite{Lafforgue-Invent}, \cite{Afgoustidis-Mackey}, \cite{BHY}), while in the $p$-adic case we still only have Lafforgue's proof. (But let us add two qualifications to this last assertion. Firstly, \cite{BHP-GLn} gives a proof of the conjecture for the special case of $p$-adic $\GL_n$, that relies on ideas that are very different from those of Lafforgue's proof. Secondly, combining results from \cite{BBH}, \cite{Higson-Nistor}, \cite{Schneider}, \cite{Voigt}, and \cite{Solleveld-HP} gives a proof that the conjecture holds modulo torsion, that is different again from Lafforgue's approach.)

Among the proofs of the Baum-Connes/Connes-Kasparov conjecture for real reductive groups, the approach we are aiming at here is closest to the one set out in \cite{Wassermann}. Wassermann begins by noting that the Plancherel theorem for real reductive groups---due principally to Harish-Chandra---gives a decomposition of $C^*_r(G)$ as a direct sum of $C^*$-algebras of Weyl-group-invariant, compact-operator-valued functions on parameter spaces for the discrete-series representations of the Levi subgroups of $G$. (This result is modeled on a similar decomposition of the Harish-Chandra Schwartz algebra, due to Arthur \cite{Arthur}.)  Wassermann uses results about intertwining operators due to Knapp and Stein \cite{Knapp-Stein-II} to identify each summand, up to Morita equivalence, with a crossed product of a commutative $C^*$-algebra by a finite group. The $K$-theory of these crossed products is then computed, and shown to coincide with the computation of $K_*(C^*_r(G))$ given by the Connes-Kasparov map.

Parts of this approach have previously been adapted to $p$-adic groups. A version of the $C^*$-algebra Plancherel decomposition for $p$-adic reductive groups appears in \cite{Plymen-Reduced}, with some examples computed in \cite{Plymen-SL2}, \cite{Plymen-GLn}, and recently in \cite{Aubert-Plymen-Klein}, for instance.  Morita equivalences and $K$-theory computations similar to Wassermann's were established in some special cases in \cite{Plymen-Reduced}, \cite{Plymen-Leung}, \cite{Jawdat-Plymen}, and \cite{AA}, for example. In a somewhat different direction, Aubert, Baum, Plymen, and Solleveld (see for instance \cite{ABPS}) have formulated, and in many cases proved, a series of conjectures on the smooth dual of a $p$-adic reductive group $G$, among which is   a formula for $K_*(C^*_r(G))$ \cite[Conjecture 5]{ABPS} that has recently been verified in \cite[Corollary 6.26]{Solleveld-notes}.

In this paper we  prove that each of the direct-summands in the Plancherel decomposition of $C^*_r(G)$ is Morita equivalent to a twisted crossed product of a commutative $C^*$-algebra by a finite group. The $K$-theory of these summands can thus be expressed in terms of twisted equivariant $K$-theory \cite{Dwyer}, and can be computed using the general machinery of \cite{Luck-Chern}. In this way we obtain a new formula for the $K$-theory of $C^*_r(G)$, modulo the classification of the discrete series (a difficult open problem, in general).

To illustrate our results in a simple special case, consider the symplectic group $G=\Sp_4(F)$. Let $M$ be a minimal Levi subgroup of $G$, let $X\cong \T^2$ be the torus of unramified unitary characters of $M$, and let $W=N_G(M)/M$ be the Weyl group of $M$. To each character $\chi\in X$ there corresponds a parabolically induced representation of $G$, and in the `compact picture' of parabolic induction these representations all act on the same Hilbert space, which we shall denote by $H$. One of the direct-summands of $C^*_r(G)$ appearing in the Plancherel theorem of \cite{Plymen-Reduced} is the $C^*$-algebra of $W$-invariants $C(X,\Compact(H))^W$, where $\Compact$ denotes the compact operators, and where the action of $W$ on $C(X,\Compact(H))$ combines the coadjoint action on $X$ and conjugation by normalised intertwining operators $I_{w,\chi}\in \Unitary(H)$ (for $w\in W$ and $\chi\in X$.) It was noted in \cite{AA} that the Morita equivalence results of \cite{Plymen-Leung} and \cite{AA} do not apply to this summand, the chief obstacle being the fact that the  groups
\[
W'_\chi = \left\{ w\in W\ \middle|\ w\chi=\chi\text{ and the operator $I_{w,\chi}$  is a scalar } \right\}
\]
do not depend solely on the isotropy groups $W_\chi$. For instance, the action of $W$ on $X$ has two fixed points, but \cite[Theorem 1, p.364]{Keys} implies that only one of these two points (namely, the trivial character) has $W'_\chi=W$.

We show that the Plancherel formula can be modified, in this example, by replacing the torus $X$ by the space $\widetilde{X}$ obtained by slicing $X$ along the lines in the picture below:
\begin{center}
\includegraphics[scale=.15]{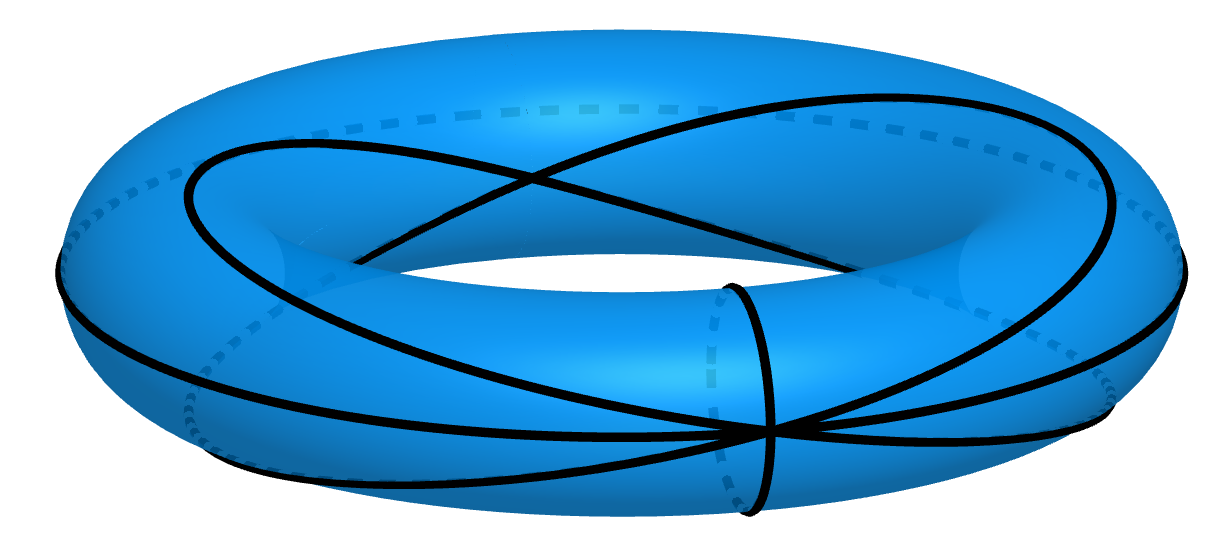}
\end{center}
These lines are the points $\chi\in X$ for which the group $W'_\chi$ is non-trivial \cite[Theorem 1, p.364]{Keys}. They are also the zeros of the Plancherel density associated to this component of the tempered dual of $G$ \cite{Silberger-dimension}. Slicing the torus $X$ in this way yields a new space $\widetilde{X}$ that is a disjoint union of four closed triangles. The action of the Weyl group $W$ on $X$ induces an action on $\widetilde{X}$, and it is not hard to see that the canonical `gluing' map $\widetilde{X}\to X$ induces an isomorphism of $C^*$-algebras $C(X,\Compact(H))^W \xrightarrow{\cong} C(\widetilde{X},\Compact(H))^W$. 

For each $\chi\in X$ the group $W'_\chi$ is the Weyl group of a root system \cite{Silberger-dimension}. The fibre of the gluing map $\widetilde{X}\to X$ over $\chi$ is naturally in bijection with the set of Weyl chambers for $W'_\chi$, and for each $\widetilde\chi$ in this fibre we have $W_{\widetilde\chi}=R_\chi$, the Knapp-Stein $R$-group at $\chi$ determined by the choice of Weyl chamber corresponding to $\widetilde{\chi}$. Thus the scalar-intertwining group $W'_{\widetilde\chi}$ is trivial for each $\widetilde\chi$, and this implies that the $C^*$-algebra $C(\widetilde{X},\Compact(H))^W$ is Morita equivalent to the crossed product $C(\widetilde{X})\rtimes W$. This implies, firstly, that the spherical principal-series component of the tempered dual of $G$ (i.e., the part of the tempered dual coming from parabolic induction of unitary unramified characters of $M$) is homeomorphic to the {extended quotient}  $\widetilde{X}/\!\!/ W$, in the sense of \cite[Section 2.1]{ABPS}, equipped with the Jacobson topology. Moreover, the Green-Julg theorem \cite{Julg} implies that the $K$-theory of the direct-summand of $C^*_r(G)$ associated with this component of the tempered dual is isomorphic to the $W$-equivariant $K$-theory of the space $\widetilde{X}$. This $K$-theory can easily be computed using the equivariant Atiyah-Hirzebruch spectral sequence \cite{Segal}, and we find that $K_0\cong \Z^2$ and $K_1=0$. 

Our results work in a similar way for a general reductive $p$-adic group $G$ and a discrete-series representation $\sigma$ of a Levi subgroup $M$ of $G$. We slice (or more precisely, we blow up, in the sense of \cite[Chapter 5]{Melrose}) the torus $X$ of unramified unitary characters of $M$ along certain hypersurfaces---the zeros of the Plancherel density associated to the family of parabolically induced $G$-representations $\Ind(\sigma\otimes\chi)$---to obtain a new space $\widetilde{X}$.  We prove that the direct-summand in the $C^*$-algebraic Plancherel formula for $G$ corresponding to the pair $(M,\sigma)$ is Morita equivalent to a twisted crossed product $C(\widetilde{X})\rtimes_\gamma W$, for an appropriate finite group $W$ and $2$-cocycle $\gamma:W\times W\to \T$. This gives a homeomorphism from the  component of the tempered dual of $G$ associated to the pair $(M,\sigma)$ to the {twisted extended quotient} $(\widetilde{X}/\!\!/ W)_\gamma$ (again in the sense of \cite[Section 2.1]{ABPS}). Our Morita equivalence result also implies that the $K$-theory of $C^*_r(G)$ is isomorphic to the direct sum of the twisted equivariant $K$-theory groups $K_*(C(\widetilde{X})\rtimes_\gamma W)\cong K^*_{W,\gamma}(\widetilde{X})$. These $K$-theory groups can be computed---at least rationally, and in low-rank cases exactly---using an Atiyah-Hirzebruch spectral sequence.

Returning to our initial motivation, the results in this paper do not yet furnish a new proof of the Baum-Connes conjecture, since our focus here is entirely on the `right-hand side' of the conjecture. Connecting our computation of $K_*(C^*_r(G))$  to the `left-hand side'---the equivariant $K$-homology of the Bruhat-Tits building---will likely require substantial additional work. 
 
In preparation for that work we have included in this paper a reformulation of our results in terms of the original parameter spaces $X$ (the tori of unramified unitary characters of the Levi subgroups of $G$), since these tori are more closely related to the Bruhat-Tits building than are our blown-up spaces $\widetilde{X}$. In terms of these tori, our results identify each of the summands $C(X,\Compact(H))^W$ appearing in the Plancherel decomposition of $C^*_r(G)$, up to Morita equivalence, with an ideal in the twisted crossed product $C(X)\rtimes_\gamma W$ that we identify explicitly; and we show that the $K$-theory of this ideal can be computed using a spectral sequence whose $E^2$ page is the equivariant cohomology of a coefficient system on $X$ that is defined in terms of certain projective representations of the $R$-groups $R_\chi$.

The paper is organised as follows. In Section \ref{sec:general} we establish an abstract Morita equivalence result for $C^*$-algebras of the form $\Compact_{C_0(X)}(E)^W$, where $X$ is a locally compact Hausdorff space, $W$ is a finite group, and $E$ is a twisted $W$-equivariant Hilbert $C_0(X)$-module.  
Though our focus in this paper is on $p$-adic groups, we note (in Example \ref{example:Wassermann}) that our abstract  result also gives a new proof of the Morita equivalences established for real reductive groups in \cite{Wassermann}. 

In Section \ref{sec:p-adic} we apply our abstract theorem to establish two new Morita equivalences for the Plancherel components the $C^*$-algebra of a $p$-adic reductive group, using the tori $X$ and their blow-ups $\widetilde{X}$, respectively.  In Section \ref{sec:K-theory} we use these equivalences, together with results of \cite{Dwyer} and \cite{Luck-Chern}, to compute the $K$-theory of $C^*_r(G)$ up to a convergent spectral sequence. Finally, Section \ref{sec:examples} contains several examples of our results, including a complete determination of the $K$-theory of all of the possible Plancherel components of $C^*_r(G)$ for the group $G=\Sp_4(F)$.

Let us conclude this introduction with a few words about the relationship between our results and the work of Aubert, Baum, Plymen and Solleveld. As we noted above, a different formula for $K_*(C^*_r(G))$ in terms of twisted equivariant $K$-theory, based on the Bernstein decomposition of $C^*_r(G)$, has recently been established by Solleveld in \cite[Corollary 6.26]{Solleveld-notes}, verifying the conjecture \cite[Conjecture 5]{ABPS}. (Earlier work of Solleveld, e.g. \cite{Solleveld-AKT,Solleveld-HH}, is also highly relevant to this computation.) Since the Plancherel decomposition refines the Bernstein decomposition, each of the equivariant $K$-theory groups appearing in Solleveld's formula is isomorphic to a direct sum of certain of the groups appearing in our formula. The isomorphisms are far from trivial, however, as we explain in Section \ref{sec:ABPS}; and the study of the underlying topological phenomena seems like an interesting direction for future work.

\paragraph*{Acknowledgements:} Our collaboration on this project was supported by funding from the American Mathematical Society, the Simons Foundation, the Fondation Sciences Mathématiques de Paris and the Université de Lorraine. We thank Anne-Marie Aubert, Nigel Higson, and Maarten Solleveld for helpful comments and corrections.

\section{Morita equivalences from twisted equivariant Hilbert modules}\label{sec:general}

Here, we shall review some basic aspects of the $C^*$-algebraic machinery to be used throughout the paper, assuming some familiarity with the language of Hilbert modules. An elementary introduction to these topics with an eye towards the representation theory of reductive groups can be found in \cite{Notes_Luminy}.

\subsection{Basic definitions}

\begin{definition}[See \S 2.4 in~\cite{Zeller-Meier}]\label{def:twisted-action-B}
A \emph{twisted action} of a finite group $W$ on a $C^*$-algebra $B$ is a pair of maps
\[
\alpha: W\to \Aut(B)\qquad \text{and}\qquad \gamma:W\times W\to \UnitaryMultiplier(B)
\]
(into the group of $*$-automorphisms and the group of unitary multipliers of $B$, respectively,) subject to the compatibility relations
\begin{itemize}
\item $\alpha_v(\alpha_w(b)) = \gamma(v,w) \alpha_{vw}(b) \gamma(v,w)^*$ for all $v,w\in W$ and $b\in B$,
\item $\alpha_u(\gamma(v,w)) \gamma(u,vw)=\gamma(uv,w)\gamma(u,v)$ for all $u,v,w\in W$, and
\item $\gamma(1_W,w)=\gamma(w,1_W)=1_{\UnitaryMultiplier(B)}$ for all $w\in W$, and $\alpha_{1_W}=1_{\Aut(B)}$.
\end{itemize}
To these data we associate the \emph{twisted crossed product} $C^*$-algebra
\[
B\rtimes_\gamma W = \left\{ \textstyle\sum_{w\in W} b_w w\ \middle|\ b_w\in B\right\},
\]
with multiplication
\[
b_1 w_1 \cdot b_2 w_2 = b_1 \alpha_{w_1}(b_2)\gamma(w_1,w_2) w_1w_2
\]
and involution
\[
(bw)^* = \gamma(w^{-1},w)^*\alpha_{w^{-1}}(b^*)w^{-1}.
\]
The norm on $B\rtimes_\gamma W$ is defined by choosing a faithful Hilbert-space representation $B\into \Bounded(H)$, and then representing $B\rtimes_\gamma W$ faithfully on the Hilbert space $\ell^2(W)\otimes H$ using the formula
\[
bw (\delta_v\otimes h) \coloneqq \delta_{wv}\otimes \alpha_{wv}^{-1}(b\gamma(w,v)).
\]
\end{definition}

We next consider twisted actions on Hilbert modules. Let $E$ be a right Hilbert module over a $C^*$-algebra $B$, with $B$-valued inner product $\langle\ |\ \rangle$. The $C^*$-algebra of compact operators on $E$---that is, the operator-norm closure of the span of the operators $\ket{\xi}\bra{\eta}:\zeta\mapsto \xi\langle\eta\ |\ \zeta\rangle$---is denoted by $\Compact_B(E)$. 

\begin{definition}\label{def:twisted-action-E}
Let $(\alpha,\gamma)$ be a twisted action of a finite group $W$ on a $C^*$-algebra $B$, and let $E$ be a right Hilbert $B$-module. An  \emph{$(\alpha,\gamma)$-twisted action} of $W$ on   $E$ is a map $U:W\to \GL(E)$ satisfying
\begin{itemize}
\item $U_w(\xi b)=U_w(\xi)\alpha_w(b)$ for all $w,\in W$, $\xi\in E$, and $b\in \UnitaryMultiplier(B)$,
\item $\langle U_w\xi\ |\ U_w\eta\rangle = \alpha_w\langle \xi\ |\ \eta\rangle$ for all $w\in W$ and $\xi,\eta\in E$, and
\item $U_v U_w(\xi) = U_{vw}(\xi)\gamma(v,w)^*$ for all $v,w\in W$ and $\xi\in E$.
\end{itemize}
Note that the second condition ensures that each $U_w$ is an isometry.
\end{definition}

Given an $(\alpha,\gamma)$-twisted action $U$ of $W$ on $E$ as above, we define
\begin{equation}\label{eq:xi-bw-definition}
\xi\cdot bw\coloneqq U_w^{-1}(\xi b) = U_{w^{-1}}(\xi)\alpha_{w^{-1}}(b)\gamma(w^{-1},w)
\end{equation}
and
\begin{equation}\label{eq:BW-valued-ip}
\llangle \xi\ |\ \eta\rrangle \coloneqq \sum_{w\in W}\langle\xi\ |\ U_w\eta\rangle w\in B\rtimes_\gamma W
\end{equation}
for all $\xi,\eta\in E$, $b\in B$, and $w\in W$.

We also define an action of the group $W$ on the $C^*$-algebra of compact operators $\Compact_B(E)$, by
\[
w:k\mapsto U_w k U_w^{-1}.
\]
This is an action by $*$-automorphisms, so the set of fixed points
\[
\Compact_B(E)^W \coloneqq \{k\in \Compact_B(E)\ |\ U_w k=kU_w\text{ for all }w\in W\}
\]
is a $C^*$-algebra.

\begin{lemma}\label{lem:E-BW-module}
The definitions \eqref{eq:xi-bw-definition} and \eqref{eq:BW-valued-ip} make $E$ into a right Hilbert $B\rtimes_\gamma W$-module, and we have 
\(
\Compact_{B\rtimes_\gamma W}(E) = \Compact_B(E)^W.
\)
\end{lemma}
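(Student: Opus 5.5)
We first check that \eqref{eq:xi-bw-definition} and \eqref{eq:BW-valued-ip} define a pre-Hilbert $B\rtimes_\gamma W$-module structure on $E$, then identify the compact operators. The action axiom $(\xi\cdot x)\cdot y=\xi\cdot(xy)$ for $x=b_1w_1$, $y=b_2w_2$ is an algebraic consequence of Definition \ref{def:twisted-action-E}. Writing $\xi\cdot bw=U_w^{-1}(\xi b)$, we have
\[
(\xi\cdot b_1w_1)\cdot b_2w_2 = U_{w_2}^{-1}\bigl(U_{w_1}^{-1}(\xi b_1)\,b_2\bigr)=U_{w_2}^{-1}U_{w_1}^{-1}\bigl(\xi b_1\alpha_{w_1}(b_2)\bigr),
\]
using $U_w(\eta b)=U_w(\eta)\alpha_w(b)$. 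Since $U_{w_1}U_{w_2}=U_{w_1w_2}(\,\cdot\,)\gamma(w_1,w_2)^*$, we get $(U_{w_1}U_{w_2})^{-1}(\zeta)=U_{w_1w_2}^{-1}(\zeta\gamma(w_1,w_2))$. This is exactly $\xi\cdot\bigl(b_1\alpha_{w_1}(b_2)\gamma(w_1,w_2)w_1w_2\bigr)$.

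Next we verify the inner-product axioms. Sesquilinearity is clear. For $B\rtimes_\gamma W$-linearity, $\llangle\xi\,|\,\eta\cdot x\rrangle=\llangle\xi\,|\,\eta\rrangle x$, it suffices to check $x=b$ and $x=w$ separately; each reduces to reindexing the sum over $W$ and applying the cocycle relations. For the adjoint property $\llangle\xi\,|\,\eta\rrangle^*=\llangle\eta\,|\,\xi\rrangle$, we use the involution formula, $\langle U_w\eta\,|\,\xi\rangle=\alpha_w\langle\eta\,|\,U_w^{-1}\xi\rangle$, and $U_w^{-1}=U_{w^{-1}}(\cdot)\gamma(w^{-1},w)$.

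Positivity is the substantive point. Here I would use the faithful representation of $B\rtimes_\gamma W$ on $\ell^2(W)\otimes H$ from Definition \ref{def:twisted-action-B}. Alternatively, and more cleanly, I would use the faithful conditional expectation $P:B\rtimes_\gamma W\to B$, $\sum b_ww\mapsto b_1$. One then shows that $\llangle\xi\,|\,\xi\rrangle=\tfrac{1}{|W|}\sum_{v}$ of terms of the form $\llangle\xi\,|\,\eta\rrangle$ arising from the averaged vector, or more directly compares $\llangle\xi|\xi\rrangle$ with the $|W|$-fold amplification. Concretely, the map $\xi\mapsto(U_v\xi)_{v\in W}$ intertwines $E$ with the standard module $E\otimes_B(B\rtimes_\gamma W)$, or with $E^{|W|}$ carrying the induced structure. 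Under this map, $\llangle\xi|\xi\rrangle$ becomes $\langle\Xi|\Xi\rangle$ in a known Hilbert module, and this gives positivity. Definiteness then follows from $P(\llangle\xi|\xi\rrangle)=\langle\xi|\xi\rangle$. The same identity shows that the norms satisfy $\|\xi\|_B\le\|\xi\|_{B\rtimes W}\le|W|^{1/2}\|\xi\|_B$ (the upper bound by the finite sum), so the two norms are equivalent and $E$ is complete. I expect positivity to be the main obstacle, and I would first try the expectation/embedding argument.

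Finally, we identify the compact operators. An operator $T$ that is adjointable for $B\rtimes_\gamma W$ commutes with the right action of each $w$, i.e. with $U_w^{-1}$. It is also $B$-adjointable, because $P$ recovers $\langle\,|\,\rangle$ and adjoints are unique. Hence $\mathcal{L}_{B\rtimes W}(E)=\mathcal{L}_B(E)^W$. For rank-one operators we compute
\[
\ket{\xi}\bra{\eta}_{B\rtimes W}(\zeta)=\sum_w \xi\cdot\langle\eta|U_w\zeta\rangle w=\sum_w U_w^{-1}\bigl(\xi\langle\eta|U_w\zeta\rangle\bigr)=\sum_w U_w^{-1}\ket{\xi}\bra{\eta}_BU_w(\zeta),
\]
which is the $W$-average $\sum_w w^{-1}\cdot k$ (up to the factor $|W|$) of $k=\ket{\xi}\bra{\eta}_B$. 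The averaging map $k\mapsto\sum_w U_w^{-1}kU_w$ is continuous and maps $\Compact_B(E)$ onto $\Compact_B(E)^W$, since it is $|W|$ times the identity on invariants. It therefore sends the dense span of rank-ones onto a dense subspace of $\Compact_B(E)^W$. This shows $\Compact_{B\rtimes_\gamma W}(E)=\Compact_B(E)^W$.
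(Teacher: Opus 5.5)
Apart from positivity, your argument is the paper's. The module axioms and linearity are routine in both. Definiteness and the lower bound $\|\xi\|_B\le\|\xi\|_{B\rtimes W}$ come from the $1_W$-coefficient: your expectation $P$ plays the role of the paper's compression to $\delta_1\otimes H$. The identification of compacts, via $|\xi\rrangle\llangle\eta|=\sum_w U_w^{-1}\ket{\xi}\bra{\eta}U_w$ and the averaging map, is the same.

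For positivity the paper computes in the faithful representation on $\ell^2(W)\otimes H$. It shows $\langle f\,|\,\llangle\xi\ |\ \xi\rrangle f\rangle=\|t_\xi f\|^2$, where $t_\xi(\sum_w\delta_w\otimes h_w)=\sum_w U_w^{-1}\xi\otimes h_w\in E\otimes_B H$, and reads off an upper bound with constant $|W|$ from $t_\xi$. Your embedding into $E\otimes_B(B\rtimes_\gamma W)$ is a genuinely different, representation-free route, and it works. As written, though, it is only a hedged sketch, and you should commit to it and do the computation.

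Put $\Phi(\xi)\coloneqq|W|^{-1/2}\sum_{v\in W}U_v\xi\otimes v$. You need three facts: $x^*=\gamma(x^{-1},x)^*x^{-1}$, $\alpha_{x^{-1}}\langle U_x\xi\ |\ U_y\eta\rangle=\langle U_{x^{-1}}U_x\xi\ |\ U_{x^{-1}}U_y\eta\rangle$, and $U_{x^{-1}}U_y=U_{x^{-1}y}(\cdot)\gamma(x^{-1},y)^*$. With these, all cocycle factors cancel and
\[
x^*\langle U_x\xi\ |\ U_y\eta\rangle\, y=\langle\xi\ |\ U_{x^{-1}y}\eta\rangle\,(x^{-1}y).
\]
Hence $\langle\Phi\xi\ |\ \Phi\eta\rangle=\llangle\xi\ |\ \eta\rrangle$, and one also checks $\Phi(\xi\cdot bw)=\Phi(\xi)\cdot bw$. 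Positivity is then inherited from the interior tensor product.

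The triangle inequality gives $\|\xi\|_{B\rtimes W}=\|\Phi\xi\|\le|W|^{1/2}\|\xi\|_B$. Your direct estimate $\|\llangle\xi\ |\ \xi\rrangle\|\le\sum_w\|\langle\xi\ |\ U_w\xi\rangle\|\le|W|\,\|\xi\|_B^2$ gives the same constant, which is slightly sharper than the paper's $|W|$.

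Drop the suggestion that $P$ alone could yield positivity. $P(x)\ge 0$ does not force $x\ge 0$; for example, $1+2u$ in $C^*(\Z/2)$ has eigenvalue $-1$.

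What your route buys is independence from any choice of faithful representation of $B$, plus an explicit isometric, $B\rtimes_\gamma W$-linear embedding of $E$ into an induced module. The paper's route is a more hands-on computation with the regular representation it has just defined. Your extra identification $\mathcal{L}_{B\rtimes_\gamma W}(E)=\mathcal{L}_B(E)^W$ is correct but not needed.
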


\begin{proof}
Routine computations confirm that \eqref{eq:xi-bw-definition} defines a right $B\rtimes_\gamma W$-module structure on $E$, and that the inner product \eqref{eq:BW-valued-ip} satisfies $\llangle \xi \ |\ \eta bw\rrangle = \llangle \xi\ |\ \eta\rrangle bw$ and $\llangle \xi\ |\ \eta\rrangle^* = \llangle \eta\ |\ \xi\rrangle$ for all $\xi,\eta\in E$, $b\in B$, and $w\in W$. For each nonzero $\xi\in E$ we have $\llangle \xi\ |\ \xi\rrangle\neq 0$, because the coefficient of $1_W$ in this inner product is the nonzero element $\langle \xi\ |\ \xi\rangle\in B$. 

To prove that $\llangle \xi\ |\ \xi\rrangle\geq 0$ for all $\xi\in E$, choose a faithful representation $B\into \Bounded(H)$ and consider the faithful representation of $B\rtimes_\gamma W$ on $\ell^2(W)\otimes H$ from Definition \ref{def:twisted-action-B}. A straightforward computation shows that for each element $f=\sum_{w\in W} \delta_w \otimes h_w \in \ell^2(W)\otimes H$ we have
\begin{equation}\label{eq:BW-ip-positive}
\left\langle f\ \middle|\ \llangle \xi\ |\ \xi\rrangle f\right\rangle = \left\langle \sum_{x\in W} U_x^{-1}(\xi)\otimes h_x\ \middle|\ \sum_{y\in W} U_y^{-1}(\xi)\otimes h_y\right\rangle,
\end{equation}
where the right-hand side is the inner product in the Hilbert space $E\otimes_B H$. Thus $\llangle\xi\ |\ \xi\rrangle\geq 0$.

To see that $E$ is complete in the norm $\|\xi\|_{\llangle\ |\ \rrangle}=\|\llangle\xi\ |\ \xi\rrangle\|^{1/2}$, we will prove that this norm is equivalent to the norm $\|\xi\|_{\langle\ |\ \rangle}=\|\langle\xi\ |\ \xi\rangle\|^{1/2}$. For each $h\in H$ we have
\[
\left\langle \delta_1\otimes h\ \middle| \llangle \xi\ |\ \xi\rrangle (\delta_1\otimes h)\right\rangle = \left\langle h\ \middle|\ \langle\xi\ |\ \xi\rangle h\right\rangle,
\]
which implies that $\|\xi\|_{\llangle\ |\ \rrangle}\geq \|\xi\|_{\langle\ |\ \rangle}$. On the other hand, the identity \eqref{eq:BW-ip-positive} shows that $\|\xi\|_{\llangle\ |\ \rrangle}$ is equal to the norm of the operator
\[
t_\xi :\ell^2(W)\otimes H \to E\otimes_B H,\qquad \sum_{w\in W} \delta_w\otimes h_w \mapsto \sum_{w\in W} U_w^{-1}(\xi)\otimes h_w.
\]
Each of the operators $\sum_w \delta_w\otimes h_w \mapsto U_v^{-1}(\xi)\otimes h_v$ has norm equal to $\|\xi\|_{\langle\ |\ \rangle}$, and so
\[
\|\xi\|_{\llangle\ |\ \rrangle} = \|t_{\xi}\| \leq |W| \cdot \|\xi\|_{\langle\ |\ \rangle}.
\]

Turning to the algebra $\Compact_{B\rtimes_\gamma W}(E)$ of compact operators, for all $\xi,\eta\in E$ we have
\[
|\xi\rrangle\llangle \eta| = \sum_{w\in W} U_w^{-1} \ket{\xi}\bra{\eta} U_w,
\]
which is $|W|$ times the projection of $\ket{\xi}\bra{\eta}$ onto the fixed-point algebra $\Compact_{B}(E)^W$. Since on the one hand the span of the operators $|\xi\rrangle\llangle\eta|$ is dense in $\Compact_{B\rtimes_\gamma W}(E)$, and on the other hand the span of the $W$-fixed projections of the operators $\ket{\xi}\bra{\eta}$ is dense in $\Compact_B(E)^W$, we have $\Compact_{B\rtimes_\gamma W}(E)=\Compact_B(E)^W$ as claimed.
\end{proof}

\begin{remark}
The above construction gives an equivalence of categories, between twisted $W$-equivariant Hilbert $B$-modules and Hilbert $B\rtimes_\gamma W$-modules, that preserves the spaces of `compact' operators on each side. This is a version of the Green-Julg theorem \cite{Julg}.
\end{remark}

\subsection{\texorpdfstring{$C_0(X)\rtimes_\gamma W$ and its dual}{C0(X)⋊gamma W and its dual}}

We now specialise to the case where the $C^*$-algebra $B$ is commutative.

Let $X$ be a locally compact Hausdorff space, equipped with an action $\alpha$ of a finite group $W$. We will usually drop $\alpha$ from the notation for the action of $W$ on $X$ itself, but include $\alpha$ for the induced action on functions: thus if $b$ is a function on $X$ then for each $w\in W$ we let $\alpha_w b$ denote the function $\alpha_w b(x) \coloneqq b(w^{-1}x)$. With $\T$ denoting the unit circle in $\C$, let $\gamma:W\times W\to C(X,\T)$ be a map such that the pair $(\alpha,\gamma)$ forms a twisted action of $W$ on $C_0(X)$, in the sense of Definition \ref{def:twisted-action-B}: that is, 
\begin{equation}\label{eq:twisted-2-cocycle}
\alpha_u\gamma(v,w)\gamma(u,vw) = \gamma(u,v)\gamma(uv,w)
\end{equation}
for all $u,v,w\in W$, and $\gamma(1,w)=\gamma(w,1)=1$ (the constant function) for all $w\in W$. We note for future use that these conditions imply the identity
\[
\alpha_v\gamma(w,w^{-1})= 
\gamma(v,w)\gamma(vw,w^{-1}).
\]
For $x\in X$ and $v,w\in W$ we write $\gamma_x(v,w)$, instead of $\gamma(v,w)(x)$, for the evaluation of the function $\gamma(v,w):X\to \T$ at the point $x$.

The spectrum of the twisted crossed product $C_0(X)\rtimes_\gamma W$ can be computed by the `Mackey machine' (see for instance \cite[Chap.~2, \S12-13]{Echterhoff-MRG_machine}), as we shall now recall. 

\begin{example}\label{ex:imprimitivity}
Suppose that $\mathcal{O}$ is a transitive (hence  finite) $W$-space. For each $x\in \mathcal{O}$ we consider the projection $P_x\coloneqq \delta_x 1_W\in C(\mathcal{O})\rtimes_\gamma W$, where $\delta_x\in C(\mathcal{O})$ denotes the characteristic function of the point $x$. It is easily checked that the two-sided ideal of $C(\mathcal{O})\rtimes_\gamma W$ generated by $P_x$ is all of $C(\mathcal{O})\rtimes_\gamma W$, and that
\[
P_x (C(\mathcal{O})\rtimes_\gamma W) P_x = \left\{ \sum_{w\in W_x} b_w \delta_x w\ \middle|\ b_w\in \C \right\} \cong C^*_{\gamma_x}(W_x),
\]
where $C^*_{\gamma_x}(W_x)\coloneqq \C\rtimes_{\gamma_x} W_x$ is the $\gamma_x$-twisted group $C^*$-algebra of the finite group $W_x$. The bimodule $(C(\mathcal{O})\rtimes_\gamma W)P_x$ thus implements a Morita equivalence $C(\mathcal{O})\rtimes_\gamma W\sim C^*_{\gamma_x}(W_x)$. 
\end{example}

\begin{definition}
Let $\mathcal{O}$ be a $W$-orbit in $X$. We let $\res_{\mathcal{O}}:C_0(X)\rtimes_\gamma W\to C(\mathcal{O})\rtimes_\gamma W$ be the $*$-homomorphism defined by
\[
\res_{\mathcal{O}}(bw) \coloneqq b\restrict_{\mathcal{O}} w,
\]
for $b\in C_0(X)$ and $w\in W$.
\end{definition}

We shall need some basic terminology about projective representations of finite groups:

\begin{definition}
Let $G$ be a finite group, and let $\eta:G\times G\to \T$ be a map satisfying $\eta(v,w)\eta(u,vw)=\eta(u,v)\eta(uv,w)$ and $\eta(w,1)=\eta(1,w)=1$ for all $u,v,w\in W$. A \emph{unitary $\eta$-representation} of $G$ is a map $\pi : G\to \Unitary(H_\pi)$, where $H_\pi$ is a complex Hilbert space, satisfying $\pi(g_1)\pi(g_2)=\eta(g_1,g_2)\pi(g_1g_2)$ for all $g_1,g_2\in G$. As usual we say that $\pi$ is \emph{irreducible} if $H_\pi$ has no proper nonzero $G$-invariant subspace. We let $\widehat{G}^{\eta}$ denote the set of isomorphism classes of irreducible unitary $\eta$-representations of $G$. This is the same as the dual of the $C^*$-algebra $C^*_{\eta}(G)$.

We also recall, for future use, that the \emph{contragredient} of an $\eta$-representation $\pi$ is the $\overline{\eta}$-representation $\overline{\pi} : G\to \Unitary(\overline{H_\pi})$ given by $\overline{\pi}(g)\overline{h}\coloneqq \overline{\pi(g)h}$, where $\overline{H_\pi}$ is the complex conjugate of the Hilbert space $H_\pi$.
\end{definition}

We now describe the dual of $C_0(X)\rtimes_\gamma W$.

\begin{definition}\label{def:rho-x-pi}
Fix $x\in X$. Let $\mathcal{O}=Wx$, let $P_x=\delta_x 1_W\in C(\mathcal{O})\rtimes_\gamma W$, and let $F_x=\left(C(\mathcal{O})\rtimes_\gamma W\right)P_x$. In Example \ref{ex:imprimitivity} we observed that the $C(\mathcal{O})\rtimes_{\gamma}W$--$C^*_{\gamma_x}(W_x)$ bimodule $F_x$ is a Morita equivalence. Thus each $\pi\in \widehat{W_x}^{\gamma_x}$ induces an irreducible $*$-representation
\[
\rho_{x,\pi}:C(\mathcal{O})\rtimes_\gamma W \to \Bounded(F_x\otimes_{C^*_{\gamma_x}(W_x)} H_\pi),\qquad \rho_{x,\pi}(c)(f\otimes h) \coloneqq cf\otimes h.
\]
Composing $\rho_{x,\pi}$ with the restriction map $\res_{\mathcal{O}}:C_0(X)\rtimes_\gamma W\to C(\mathcal{O})\rtimes_\gamma W$ gives an irreducible representation of $C_0(X)\rtimes_\gamma W$, which we also denote by $\rho_{x,\pi}$. 
\end{definition}

\begin{lemma}\label{lem:Mackey-machine}
\begin{enumerate}[\rm(a)]
\item Every irreducible representation of $C_0(X)\rtimes_\gamma W$ is isomorphic to $\rho_{x,\pi}$ for some $x\in X$ and $\pi\in \widehat{W_x}^{\gamma_x}$.
\item $\rho_{x,\pi} \cong \rho_{y,\tau}$ if and only if there is a $w\in W$ such that $y=wx$ and $\tau \cong {}^w\pi$, where ${}^w\pi:W_{wx}\to \Unitary(H_\pi)$ is the unitary $\gamma_{wx}$-representation 
\[
{}^w\pi(v) \coloneqq \overline{\gamma_x(w^{-1},w)}\gamma_x(w^{-1},v)\gamma_x(w^{-1}v,w) \pi(w^{-1}vw).
\]
\end{enumerate}
\end{lemma}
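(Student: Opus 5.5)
Part (a) will follow from the standard argument that every irreducible representation of $C_0(X)\rtimes_\gamma W$ factors through $\res_{\mathcal{O}}$ for a single orbit $\mathcal{O}$. Let $\rho$ be an irreducible representation on $H$. Since $W$ is finite, the inclusion $C_0(X)\to C_0(X)\rtimes_\gamma W$, $b\mapsto b1_W$, is nondegenerate, so $\rho$ restricts to a nondegenerate representation of $C_0(X)$. The subalgebra $C_0(X)^W$ is central in $C_0(X)\rtimes_\gamma W$: for invariant $b$ we have $(b1_W)(cw)=bc\,w=c\,\alpha_w(b)w=(cw)(b1_W)$. By Schur's lemma $\rho$ therefore restricts on $C_0(X)^W$ to a character. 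That character is evaluation at some point of $X/W$, i.e.\ at an orbit $\mathcal{O}$. It follows that $\rho$ kills the ideal generated by the functions in $C_0(X)$ vanishing on $\mathcal{O}$. I would check that this ideal is exactly $\ker\res_{\mathcal{O}}$: an element $\sum b_w w$ lies in the kernel iff each $b_w$ vanishes on $\mathcal{O}$, and each such $b_w$ factors as a product of a function vanishing on $\mathcal{O}$ and another function, using Cohen factorization or just the structure of $C_0$. Hence $\rho$ factors through the finite-dimensional algebra $C(\mathcal{O})\rtimes_\gamma W$. By Example~\ref{ex:imprimitivity} this algebra is Morita equivalent to $C^*_{\gamma_x}(W_x)$ via $F_x$, for any chosen $x\in\mathcal{O}$. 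Morita equivalence induces a bijection of irreducible representations given by $\pi\mapsto F_x\otimes H_\pi$, so $\rho\cong\rho_{x,\pi}$ for some $\pi$.

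For (b), I take the two directions in turn.

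\emph{Only if.} Suppose $\rho_{x,\pi}\cong\rho_{y,\tau}$. The two representations restrict to the same character of $C_0(X)^W$, so $x$ and $y$ lie in the same orbit. For a fixed base point $x$, the Morita bijection shows that $\rho_{x,\pi}\cong\rho_{x,\pi'}$ forces $\pi\cong\pi'$.

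\emph{If, and reduction to a fixed base point.} It remains to compare the two base points $x$ and $wx$ of the same orbit, and this is the step I expect to be the main obstacle, since it is where the cocycle formula has to be verified. The key is that the projections $P_x$ and $P_{wx}$ are unitarily equivalent in $C(\mathcal{O})\rtimes_\gamma W$. Put $V=\delta_{wx}w$; a short computation should give $V^*V=P_x$ and $VV^*=P_{wx}$. Right multiplication by $V^*$ then gives a bimodule isomorphism $F_x\to F_{wx}$, $f\mapsto fV^*$. This isomorphism intertwines the right actions through the isomorphism
\[
P_x(C(\mathcal{O})\rtimes_\gamma W)P_x\to P_{wx}(\cdots)P_{wx},\qquad a\mapsto VaV^*,
\]
which sends $\delta_x v$ to a scalar multiple of $\delta_{wx}\,wvw^{-1}$. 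Transporting $\pi$ along this isomorphism gives an irreducible $\gamma_{wx}$-representation of $W_{wx}$ that induces $\rho_{x,\pi}$ from base point $wx$. Computing the scalar with the multiplication and involution formulas of Definition~\ref{def:twisted-action-B} should give the stated formula for ${}^w\pi$. I would simplify it using the derived identity $\alpha_v\gamma(w,w^{-1})=\gamma(v,w)\gamma(vw,w^{-1})$; equivalently, one can check that $V^*(\delta_{wx}v)V$ is the appropriate multiple of $\delta_x w^{-1}vw$.

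Given this transport, the general case of (b) follows from the fixed-base-point statement: $\rho_{y,\tau}\cong\rho_{x,\pi}$ with $y=wx$ holds iff $\tau\cong{}^w\pi$. The bookkeeping of $\gamma$-factors, keeping track of which point each cocycle is evaluated at, is the only delicate part.
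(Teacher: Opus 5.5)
Your proposal is correct and follows essentially the paper's proof. You use the centrality of $C_0(X)^W$ and Schur's lemma to localise to a single orbit, the Morita equivalence $F_x$ to handle a fixed base point, and conjugation to change base point; since $V=\delta_{wx}w=wP_x$, your map $f\mapsto fV^*$ on $F_x$ and $\Ad_V$ on the corner coincide with the paper's $f\mapsto fw^*$ and $\Ad_w$. Pulling $\pi$ back along $\Ad_{w^*}$, i.e.\ computing $w^*(\delta_{wx}v)w=\overline{\gamma_x(w^{-1},w)}\gamma_x(w^{-1},v)\gamma_x(w^{-1}v,w)\,\delta_x\,w^{-1}vw$, gives exactly the stated formula for ${}^w\pi$.
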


\begin{proof}
If $b\in C_0(X)$ is a $W$-invariant function then $b1_W$ lies in the centre of $C_0(X)\rtimes_\gamma W$. Schur's lemma thus implies that each irreducible representation of $C_0(X)\rtimes_\gamma W$ factors through the restriction $\res_{\mathcal{O}}$ for a unique orbit $\mathcal{O}\subseteq X$. Since for each $x\in\mathcal{O}$ the bimodule $F_x$ is a Morita equivalence, the irreducible representations of $C(\mathcal{O})\rtimes_\gamma W$ are precisely those of the form $F_x\otimes_{C^*_{\gamma_x}(W_x)} H_\pi$, for $\pi$ an irreducible unitary $\gamma_x$-representation of $W_x$. This proves part (a).

For part (b), fix $x\in X$ and let $\mathcal{O}=Wx$. For each $w\in W$ the $*$-automorphism
\[
\Ad_w: C(\mathcal{O})\rtimes_{\gamma} W \to C(\mathcal{O})\rtimes_{\gamma} W,\qquad c\mapsto wcw^*
\]
satisfies $\Ad_w(P_x)=P_{wx}$, and so this map restricts to an isomorphism
\begin{equation}\label{eq:Adw-x-wx}
C^*_{\gamma_x}(W_x)\cong P_x\left( C(\mathcal{O})\rtimes_{\gamma} W\right)P_x \xrightarrow{\cong} P_{wx} \left( C(\mathcal{O})\rtimes_\gamma W\right) P_{wx} \cong C^*_{\gamma_{wx}}(W_{wx}).
\end{equation}
The map $\pi\mapsto {}^w\pi$, from $\widehat{W_x}^{\gamma_x}$ to $\widehat{W_{wx}}^{\gamma_{wx}}$, is the pushforward of representations along the isomorphism \eqref{eq:Adw-x-wx}. It follows from this that the map
\[
F_x\otimes_{C^*_{\gamma_x}(W_x)} H_\pi \to F_{wx}\otimes_{C^*_{\gamma_{wx}}(W_{wx})} H_\pi,\qquad f\otimes h\mapsto fw^*\otimes h
\]
is a unitary isomorphism of $C_0(X)\rtimes_\gamma W$-representations $\rho_{x,\pi}\cong \rho_{wx,{}^w \pi}$. 

For the ``only if'' assertion in (b), suppose that $\rho_{x,\pi}\cong \rho_{y,\tau}$. Then these representations coincide on the centre of $C_0(X)\rtimes_{\gamma} W$, so $x$ and $y$ must lie in the same $W$-orbit: thus $y=wx$ for some $w\in W$. We then have $\rho_{y,\tau}\cong \rho_{x,\pi}\cong \rho_{y,{}^w\pi}$, and since $F_y$ is a Morita equivalence this implies that $\tau\cong{}^w\pi$.
\end{proof}

\subsection{\texorpdfstring{The ideal $C(X,E,W)$}{The ideal C(X,E,W)}}

Let $E$ be a right Hilbert $C_0(X)$-module equipped with an $(\alpha,\gamma)$-twisted action $U$ of $W$, as in Definition \ref{def:twisted-action-E}. Let $\llangle\ |\ \rrangle$ be the $C_0(X)\rtimes_\gamma W$-valued inner product on $E$ defined by \eqref{eq:BW-valued-ip}, and define
\[
C(X,E,W)\coloneqq \overline{\lspan}\left\{ \llangle\xi\ |\ \eta\rrangle\in C_0(X)\rtimes_\gamma W\ \middle|\ \xi,\eta\in E\right\}.
\]

Each Hilbert module $F$ over a $C^*$-algebra $B$ induces a Morita equivalence between  the ideal $\overline{\lspan}\{\langle\xi\ |\ \eta\rangle\ |\ \xi,\eta\in F\}$ in $B$, and the $C^*$-algebra $\Compact_B(F)$. Lemma \ref{lem:E-BW-module} thus implies that $E$ implements a Morita equivalence between $C(X,E,W)$ and the fixed-point algebra $\Compact_B(E)^W$. The ideal $C(X,E,W)$ in $C_0(X)\rtimes_\gamma W$ determines, and is determined by, the closed subset
\[
\left\{ \rho_{x,\pi} \ \middle|\ \rho_{x,\pi}(C(X,E,W))\neq 0 \right\}
\]
of $\widehat{C_0(X)\rtimes_\gamma W}$. (See Definition \ref{def:rho-x-pi} for the meaning of $\rho_{x,\pi}$.) We are going to describe this set in terms of local data, at each $x\in X$.

For each $x\in X$ we let $\C_x=\C$ equipped with the representation $b\mapsto b(x)$ of $C_0(X)$. The Hilbert-module tensor product $E_x\coloneqq E\otimes_{C_0(X)} \C_x$ is a Hilbert space. For each $\xi\in E$ we let $\xi(x) \coloneqq \xi\otimes 1\in E_x$. We have $\langle \xi\ |\ \eta\rangle(x) = \langle \xi(x)\ |\ \eta(x)\rangle$ for all $\xi,\eta\in E$ and all $x\in X$.

For each $w\in W$ and each $x\in X$ the formula
\[
I_{w,x} : E_x\to E_{wx}, \qquad I_{w,x}(\xi(x)) \coloneqq (U_w\xi)(wx) 
\]
gives a well-defined unitary operator. Writing the same thing slightly differently, 
\[
(U_w\xi)(x) = I_{w,w^{-1}x}(\xi(w^{-1}x)).
\]
 
It will be useful to note the following local formula for the $C_0(X)\rtimes_\gamma W$-module structure on $E$: 
\begin{equation}\label{eq:xiw-def-I}
(\xi\cdot bw)(x) = I_{w,x}^*(\xi(wx))b(wx).
\end{equation}

The cocycle relation $U_v U_w(\xi) = U_{vw}(\xi)\gamma(v,w)^*$ ensures that for all $v,w\in W$ and $x\in X$ we have
\begin{equation}\label{eq:I-gamma}
I_{v,wx}I_{w,x} = \overline{\gamma_{vw x}(v,w)} I_{vw,x},
\end{equation}
where we recall that $\gamma_x(v,w)$ denotes the scalar $\gamma(v,w)(x)\in \T$. 
Thus the map $I_x:W_x\to\Unitary(E_x)$ defined by $w\mapsto I_{w,x}$ is a unitary $\overline{\gamma_x}$-representation.

\begin{definition}
For each $x\in X$ we define
\[
\supp_{E,x}\coloneqq \left\{ \pi\in \widehat{W_x}^{\gamma_x}\ \middle|\ \begin{aligned}
&\pi\text{ appears in the contragredient of the }\\ &\text{$\overline{\gamma_x}$-representation $I_x:w\mapsto I_{w,x}\in \Unitary(E_x)$}\end{aligned}\right\}.
\]
\end{definition}

\begin{proposition}\label{prop:CXEW-support}
For each $x\in X$ and $\pi\in \widehat{W_x}^{\gamma_x}$ we have $\rho_{x,\pi}(C(X,E,W))\neq 0$  if and only if $\pi\in \supp_{E,x}$. In other words (since each closed two-sided ideal in a $C^*$-algebra is an intersection of primitive ideals),
\begin{equation}\label{eq:CXEW-support}
C(X,E,W) = \bigcap_{Wx\in W\backslash X} \bigcap_{\substack{\pi\in \widehat{W_x}^{\gamma_x},\\ \pi\notin  \supp_{E,x}}} \ker \rho_{x,\pi}.
\end{equation}
\end{proposition}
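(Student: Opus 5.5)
Since $\rho_{x,\pi}$ factors through $\res_{\mathcal{O}}$ with $\mathcal{O}=Wx$, and through the Morita equivalence $F_x$ of Example \ref{ex:imprimitivity}, the plan is to compress everything by $P_x$. The ideal $C(X,E,W)$ is the closed span of the inner products $\llangle\xi\,|\,\eta\rrangle$. Hence $\rho_{x,\pi}(C(X,E,W))\neq 0$ if and only if some $\rho_{x,\pi}(\llangle\xi\,|\,\eta\rrangle)\neq 0$.

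A nonzero ideal of $C(\mathcal{O})\rtimes_\gamma W$ is not annihilated by an irreducible representation exactly when its image under the Morita correspondence is not killed by $\pi$. Since $P_x$ is full, this reduces to the following condition: there exist $\xi,\eta$ such that $\pi$ does not vanish on
\[
P_x\,\res_{\mathcal{O}}(\llangle\xi\,|\,\eta\rrangle)\,P_x\in C^*_{\gamma_x}(W_x).
\]
Here I use that $\rho_{x,\pi}(J)\neq 0$ if and only if $\pi(P_xJP_x)\neq 0$ for an ideal $J$, because $\rho_{x,\pi}$ restricted to the corner $P_x(\cdot)P_x$ is $\pi$ on $P_x\rho_{x,\pi}$.

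Next I compute this corner explicitly. From \eqref{eq:BW-valued-ip}, $\res_{\mathcal{O}}\llangle\xi\,|\,\eta\rrangle=\sum_w \langle\xi\,|\,U_w\eta\rangle|_{\mathcal{O}}\,w$. Multiplying by $\delta_x$ on both sides keeps only the $w\in W_x$ terms, with coefficient $\langle\xi\,|\,U_w\eta\rangle(x)$, because $\delta_x w\,\delta_x = \delta_x\alpha_w(\delta_x)w$ and $\alpha_w\delta_x=\delta_{wx}$. Using $(U_w\eta)(x)=I_{w,x}\eta(x)$ for $w\in W_x$, the corner element is
\[
\sum_{w\in W_x}\langle \xi(x)\,|\,I_{w,x}\eta(x)\rangle\, w\in C^*_{\gamma_x}(W_x).
\]
Such elements are matrix coefficients of the $\overline{\gamma_x}$-representation $I_x$ on $E_x$. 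Moreover, as $\xi,\eta$ range over $E$, the vectors $\xi(x),\eta(x)$ range over all of $E_x$, since $E\to E_x$ is surjective. So the corner of $C(X,E,W)$ is the closed span of these coefficient functions.

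The key claim is therefore that $\pi$ is nonzero on the span of the elements $\sum_{w}\langle u\,|\,I_{w,x}v\rangle w$, for $u,v\in E_x$, if and only if $\pi$ occurs in $\overline{I_x}$. I would argue this via the finite-dimensional structure. Decompose $E_x=\bigoplus_\sigma V_\sigma\otimes H_\sigma$ into $\overline{\gamma_x}$-isotypic pieces; then $I_x$ is a Hilbert direct sum of irreducibles. For $u,v$ in a single copy of $\sigma$, the function $w\mapsto\langle u\,|\,\sigma(w)v\rangle$ is antilinear in $u$ and equals $\overline{\langle \sigma(w)v\,|\,u\rangle}$, which is a matrix coefficient of $\overline{\sigma}$ (up to the paper's inner-product convention). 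By Schur orthogonality for twisted group algebras, the span of these coefficients is exactly the simple matrix block of $C^*_{\gamma_x}(W_x)\cong\bigoplus_{\tau}\mathrm{End}(H_\tau)$ corresponding to $\tau=\overline{\sigma}$, or to its contragredient. Cross terms between inequivalent or orthogonal summands vanish. Hence the corner of $C(X,E,W)$ is the sum of the blocks indexed by $\supp_{E,x}$, and $\pi$ is nonzero on it precisely when $\pi\in\supp_{E,x}$.

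The main obstacle will be the bookkeeping of conjugations and cocycles: identifying the coefficient function with the block of $\overline{\sigma}$ rather than $\sigma$, given the convention that the inner product is linear in the second variable and the multiplication rule with $\gamma_x$. I would check this first in the untwisted case $\gamma=1$, then track the twist through the definitions. Infinite-dimensional $E_x$ causes no difficulty, since $W_x$ is finite and isotypic projections exist.

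Finally, the identity \eqref{eq:CXEW-support} follows because a closed ideal equals the intersection of the kernels of the irreducible representations that annihilate it. By Lemma \ref{lem:Mackey-machine} these are the $\rho_{x,\pi}$ with $\pi\notin\supp_{E,x}$, and it suffices to take one $x$ per orbit, since $\supp$ is compatible with ${}^w\pi$ via $I_{w,x}$.
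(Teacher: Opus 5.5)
Your proposal is correct and follows essentially the paper's own argument. Both proofs compress by the full projection $P_x$ to reduce to the span of the elements $\sum_{w\in W_x}\langle u\,|\,I_{w,x}v\rangle w$ in $C^*_{\gamma_x}(W_x)$, and then conclude by Schur orthogonality; the paper phrases the first step as localising $E$ to the orbit and computing $P_x\llangle\xi\,|\,\eta\rrangle P_x=\llangle\xi(x)\,|\,\eta(x)\rrangle$. The hedge in your block identification resolves itself: the blocks of $C^*_{\gamma_x}(W_x)$ are indexed by $\gamma_x$-representations, and the coefficient functions of an irreducible $\overline{\gamma_x}$-subrepresentation $\sigma$ of $I_x$ span exactly the block of $\overline{\sigma}$, which is what the definition of $\supp_{E,x}$ requires.
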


\begin{proof}
Fix $x\in X$,  let $\mathcal{O}$ denote the orbit $Wx$, and let $E_{\mathcal{O}}$ denote the Hilbert $C(\mathcal{O})$-module $E\otimes_{C_0(X)} C(\mathcal{O})$. The given twisted action of $W$ on $E$ induces a twisted action $U_w(\xi\otimes b)\coloneqq U_w(\xi)\otimes \alpha_w(b)$ on $E_{\mathcal{O}}$, and so the formulas \eqref{eq:xi-bw-definition} and \eqref{eq:BW-valued-ip} make $E_{\mathcal{O}}$ into a right Hilbert $C(\mathcal{O})\rtimes_\gamma W$-module. For all $\xi,\eta\in E$ we have
\[
\res_{\mathcal{O}}\llangle \xi\ |\ \eta\rrangle = \left\llangle \xi\otimes 1_{\mathcal{O}}\ \middle|\ \eta\otimes 1_{\mathcal{O}}\right\rrangle,
\]
where $1_{\mathcal{O}}$ denotes the constant function $1$ on $\mathcal{O}$. This equality ensures that $\res_{\mathcal{O}}C(X,E,W) = C(\mathcal{O}, E\restrict_{\mathcal{O}}, W)$, and since each of the representations $\rho_{x,\pi}$ factors through $\res_{\mathcal{O}}$ we may assume without loss of generality that $X=\mathcal{O}$.

Next, recall that the irreducible representation $\rho_{x,\pi}$ of $C(\mathcal{O})\times_\gamma W$ is the one corresponding to the irreducible representation $\pi$ of $C^*_{\gamma_x}(W_x)$ via the full projection $P_x=\delta_x 1_W\in C(\mathcal{O})\rtimes_\gamma W$ (see Example \ref{ex:imprimitivity}). The map $J\mapsto P_x J P_x$ is a bijection between the set of ideals in $C(\mathcal{O})\rtimes_\gamma W$ and the set of ideals in $P_x\left(C(\mathcal{O})\rtimes_\gamma W\right)P_x\cong C^*_{\gamma_x}(W_x)$. This bijection preserves inclusions, and sends the primitive ideal $\ker \rho_{x,\pi}\subseteq C(\mathcal{O})\rtimes_\gamma W$ to the primitive ideal $\ker \pi \subseteq C^*_{\gamma_x}(W_x)$. Our goal is to prove that $C(\mathcal{O},E,W)=\bigcap_{\pi\not\in \supp_{E,x}} \ker \rho_{x,\pi}$, and the preceding remarks show that this is equivalent to proving that $P_x C(\mathcal{O},E,W)P_x = \bigcap_{\pi\not\in \supp_{E,x}} \ker \pi$.

We will now compute  $P_x C(\mathcal{O},E,W)P_x$. 
For all $\xi,\eta\in E$ we have
\[
\begin{aligned}
& P_x\left\llangle \xi \ \middle|\ \eta \right\rrangle P_x  = \left\llangle \xi \delta_x \ \middle|\ \eta \delta_x \right\rrangle  
= \sum_{w\in W} \left\langle \xi \delta_x\ \middle| \ U_w(\eta \delta_x)\right\rangle w \\
& =  \sum_{w\in W} \left\langle \xi \ \middle| \ U_w(\eta)\right\rangle \delta_x\delta_{wx} w = \sum_{w\in W_x} \left\langle \xi \ \middle| \ U_w(\eta )\right\rangle(x) \delta_x w \\
&=  \sum_{w\in W_x} \left\langle  \xi(x)\ \middle| \ U_w(\eta(x))\right\rangle \delta_x w  = \llangle \xi(x)\ |\ \eta(x)\rrangle \in C(x,E_x,W_x).
\end{aligned}
\]
Thus the bijection $J\mapsto P_x J P_x$ sends the ideal $C(\mathcal{O}, E, W)$ to the ideal $C(x,E_x, W_x)$ in $C^*_{\gamma_x}(W_x)$. 

We are left to prove that $C(x,E_x,W_x)=\bigcap_{\pi\not\in \supp_{E,x}} \ker \pi$, which follows from Schur orthogonality: for $\pi\in \widehat{W_x}^{\gamma_x}$, and $\xi(x),\eta(x)\in E_x$, we have
\[
\pi\left( \llangle \xi(x)\ |\ \eta(x)\rrangle\right) = \sum_{w\in W_x} \langle \xi(x)\ |\ I_{w,x}\eta(x) \rangle \pi(w),
\]
which is equal to $0$ for all $\xi,\eta$ if and only if $\pi$ is not contained in the contragredient of $I_{x}$.
\end{proof}

\begin{corollary}\label{cor:E-equivalence-easy}
The Hilbert module $E$ is a Morita equivalence between $\Compact_{C_0(X)}(E)^W$ and $C_0(X)\rtimes_\gamma W$ if and only if, for every $x\in X$, every irreducible $\gamma_x$-representation of $W_x$ occurs in the contragredient of the representation $I_x:w\mapsto I_{w,x}$.\hfill\qed
\end{corollary}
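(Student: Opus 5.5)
The corollary follows directly from Lemma \ref{lem:E-BW-module} and Proposition \ref{prop:CXEW-support}. The plan is to reduce the Morita equivalence statement to the fullness of $E$ as a Hilbert $C_0(X)\rtimes_\gamma W$-module, and then read fullness off from the support description.

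First, recall the standard fact that a right Hilbert $B$-module $F$ is a $\Compact_B(F)$--$B$ imprimitivity bimodule exactly when it is full, i.e.\ when $\overline{\lspan}\langle F\,|\,F\rangle = B$. By Lemma \ref{lem:E-BW-module}, $E$ is a Hilbert $C_0(X)\rtimes_\gamma W$-module with $\Compact_{C_0(X)\rtimes_\gamma W}(E)=\Compact_{C_0(X)}(E)^W$. So $E$ is a Morita equivalence between $\Compact_{C_0(X)}(E)^W$ and $C_0(X)\rtimes_\gamma W$ if and only if $C(X,E,W)=C_0(X)\rtimes_\gamma W$. For the ``only if'' direction I will take ``$E$ is a Morita equivalence'' to mean that $E$ itself, with this inner product, is the imprimitivity bimodule; fullness is then part of the definition.

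Second, a closed two-sided ideal of a $C^*$-algebra is the whole algebra if and only if no irreducible representation vanishes on it. Lemma \ref{lem:Mackey-machine}(a) says every irreducible representation of $C_0(X)\rtimes_\gamma W$ is of the form $\rho_{x,\pi}$. Hence $C(X,E,W)$ is everything if and only if $\rho_{x,\pi}(C(X,E,W))\neq 0$ for all $x\in X$ and all $\pi\in\widehat{W_x}^{\gamma_x}$. By Proposition \ref{prop:CXEW-support}, this holds if and only if $\supp_{E,x}=\widehat{W_x}^{\gamma_x}$ for every $x$. Unwinding the definition of $\supp_{E,x}$, this says that every irreducible $\gamma_x$-representation of $W_x$ occurs in the contragredient of $I_x$, which is the stated condition.

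No step is a real obstacle. The only point needing care is the interpretation of ``Morita equivalence via $E$'' as fullness of $E$. Equivalently, by \eqref{eq:CXEW-support}, the intersection of kernels on the right-hand side is the whole algebra exactly when the index set of excluded pairs $(x,\pi)$ is empty.
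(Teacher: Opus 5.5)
Your proposal is correct and is the argument the paper intends. The paper states the corollary without a separate proof, as an immediate consequence of Lemma~\ref{lem:E-BW-module} (so $E$ is an imprimitivity bimodule exactly when $C(X,E,W)=C_0(X)\rtimes_\gamma W$), Proposition~\ref{prop:CXEW-support}, and the description of the dual in Lemma~\ref{lem:Mackey-machine}, combined just as you combine them.
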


\subsection{Scalar intertwiners and the completeness condition}

In our study  of Morita equivalences for  $p$-adic reductive groups in Section \ref{sec:p-adic},  we will see that it is possible to arrange things so that the condition in Corollary \ref{cor:E-equivalence-easy}---that $\supp_{E,x} = \widehat{W_x}^{\gamma_x}$ for all $x\in X$---is satisfied. This condition is in general not satisfied, however, for the most natural choice of the space $X$ (namely, the torus of unramified unitary characters of a Levi subgroup of our reductive group); in order to apply Corollary \ref{cor:E-equivalence-easy} we will need to work with a modification of this space. Since there may be situations where one wants to work directly with the torus of unramified characters, we will now give an explicit description of the ideal $C(X,E,W)$ under a less stringent hypothesis than the one in Corollary \ref{cor:E-equivalence-easy}, that is satisfied in the case of these tori.

An obvious reason why we might have $\supp_{E,x}\neq \widehat{W_x}^{\gamma_x}$ is if there is a non-identity element $w\in W_x$ whose associated operator $I_{w,x}$ is a scalar; then $\supp_{E,x}$ can only contain representations in which $w$ acts by the complex-conjugate of that scalar. This is the obstruction that we will now deal with. 

\begin{definition}
For each $x\in X$ we define
\[
W_x' \coloneqq \{w\in W_x\ |\ I_{w,x} \text{ is a scalar operator on $E_x$} \}.
\]
For $w\in W'_x$ we let $\i_{w,x}\in\T$ be the scalar satisfying $I_{w,x}=\i_{w,x}\id_{E_x}$.
\end{definition}

\begin{lemma}\label{lem:Wprime-normal}
For each $x\in X$ and each $w\in W$ we have $wW'_x w^{-1} = W'_{wx}$, and for each $v\in W'_x$ we have
\begin{equation}\label{eq:i-conjugation}
\i_{wvw^{-1},wx} = \i_{v,x} \gamma_{wx}(wv,w^{-1})\gamma_{wx}(w,v)\overline{\gamma_{wx}(w,w^{-1})}.
\end{equation}
In particular, $W'_x$ is a normal subgroup of $W_x$, and the projection
\[
P_{\i,x}\coloneqq \frac{1}{|W'_x|}\sum_{v\in W'_x} \i_{v,x}v 
\]
is central in $C^*_{\gamma_x}(W_x)$.
\end{lemma}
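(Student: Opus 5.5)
The plan is to compute directly with the cocycle relation \eqref{eq:I-gamma} for the operators $I_{w,x}$. Fix $x\in X$, $w\in W$ and $v\in W'_x$, so that $I_{v,x}=\i_{v,x}\id_{E_x}$. We express $I_{wvw^{-1},wx}$ as the composite $E_{wx}\to E_x\to E_x\to E_{wx}$, namely
\[
I_{w,x}\, I_{v,x}\, I_{w^{-1},wx}.
\]
Two applications of \eqref{eq:I-gamma} give the comparison. First, $I_{v,x}I_{w^{-1},wx}=\overline{\gamma_{x}(v,w^{-1})}\,I_{vw^{-1},wx}$, using $vw^{-1}(wx)=vx=x$. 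Second, $I_{w,x}I_{vw^{-1},wx}=\overline{\gamma_{wx}(w,vw^{-1})}\,I_{wvw^{-1},wx}$. Hence $I_{wvw^{-1},wx}$ is a scalar multiple of $I_{w,x}I_{v,x}I_{w^{-1},wx}=\i_{v,x}\,I_{w,x}I_{w^{-1},wx}$. Moreover $I_{w,x}I_{w^{-1},wx}=\overline{\gamma_{wx}(w,w^{-1})}\,I_{1,wx}=\overline{\gamma_{wx}(w,w^{-1})}\id$, since $I_{1,wx}=\id$ because $U_1=\id$. So $I_{wvw^{-1},wx}$ is a scalar, which shows $wW'_xw^{-1}\subseteq W'_{wx}$. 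Applying the same inclusion to $w^{-1}$ and the point $wx$ gives the reverse inclusion. The explicit value of the scalar is then a product of $\gamma$-factors times $\i_{v,x}$.

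The main obstacle is the bookkeeping needed to turn that product of $\gamma$'s into the precise form \eqref{eq:i-conjugation}. Rather than the factorization above, I would first try the factorization
\[
I_{w,x}I_{v,x}=\overline{\gamma_{wx}(w,v)}\,I_{wv,x},\qquad I_{wv,x}\,I_{w^{-1},wx}=\overline{\gamma_{wx}(wv,w^{-1})}\,I_{wvw^{-1},wx}.
\]
Both use $wvx=wx$. This yields
\[
\i_{v,x}\,\overline{\gamma_{wx}(w,w^{-1})}=\overline{\gamma_{wx}(w,v)}\;\overline{\gamma_{wx}(wv,w^{-1})}\;\i_{wvw^{-1},wx},
\]
and multiplying through by the two $\gamma$ factors (which have modulus one) gives exactly \eqref{eq:i-conjugation}. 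This avoids invoking the 2-cocycle identity at all.

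For the consequences, take $w\in W_x$ in the conjugation statement: then $wW'_xw^{-1}=W'_x$. That $W'_x$ is a subgroup of $W_x$ follows from \eqref{eq:I-gamma} with both points equal to $x$, since products and inverses of scalar $I$'s are scalars. Hence $W'_x$ is normal in $W_x$.

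For $P_{\i,x}$, note first that $v\mapsto \i_{v,x}$ is a $\gamma_x$-"character" of $W'_x$. Indeed, $\i_{u,x}\i_{v,x}=\overline{\gamma_x(u,v)}\,\i_{uv,x}$, and in $C^*_{\gamma_x}(W'_x)$ this reads $(\i_{u,x}u)(\i_{v,x}v)=\i_{u,x}\i_{v,x}\gamma_x(u,v)\,uv=\i_{uv,x}\,uv$. So $v\mapsto \i_{v,x}v$ is a genuine homomorphism of $W'_x$ into the unitaries. It follows that $P_{\i,x}$ is the averaging projection: it is idempotent, and self-adjoint because the set of elements $\i_{v,x}v$ is closed under adjoints.

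Centrality in $C^*_{\gamma_x}(W_x)$ amounts to checking that $w(\i_{v,x}v)w^*=\i_{wvw^{-1},x}\,wvw^{-1}$ for $w\in W_x$. The twisted-crossed-product multiplication gives $w\,v\,w^*=c\cdot wvw^{-1}$, where $c$ is a product of values $\gamma_x(w,v)$, $\gamma_x(wv,w^{-1})$ and $\overline{\gamma_x(w^{-1},w)}$-type terms. By \eqref{eq:i-conjugation} these are exactly the factors relating $\i_{wvw^{-1},x}$ to $\i_{v,x}$. Here one uses the noted identity relating $\gamma(w^{-1},w)$ and $\gamma(w,w^{-1})$ (with $\alpha$ trivial at the fixed point $x$). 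Conjugation by $w$ therefore permutes the terms of $P_{\i,x}$, so $P_{\i,x}$ commutes with every $w\in W_x$ and is central.
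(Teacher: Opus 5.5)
Your proposal is correct, and your second factorization is exactly the paper's proof: two applications of \eqref{eq:I-gamma} give $I_{wvw^{-1},wx}=\gamma_{wx}(wv,w^{-1})\gamma_{wx}(w,v)\,I_{w,x}I_{v,x}I_{w^{-1},wx}$, and evaluating $I_{w,x}I_{w^{-1},wx}=\overline{\gamma_{wx}(w,w^{-1})}\,\id$ yields \eqref{eq:i-conjugation}, after which the paper simply specialises to $w\in W_x$. The additional details you supply are all correct and fill in what the paper leaves implicit: the reverse inclusion via $w^{-1}$, closure of $W'_x$ under products and inverses, the fact that $v\mapsto \i_{v,x}v$ is an honest homomorphism (so $P_{\i,x}$ is a projection), and the identity $\gamma_x(w,w^{-1})=\gamma_x(w^{-1},w)$ for $w\in W_x$ needed to match $wvw^*$ with \eqref{eq:i-conjugation}; the only slip is cosmetic, since $\i_x$ is a $\overline{\gamma_x}$-character rather than a $\gamma_x$-character, as your own formula shows.
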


\begin{proof}
For $w\in W$ and $v\in W'_x$ the relation \eqref{eq:I-gamma} gives
\begin{align*}
I_{wvw^{-1},wx} &= \gamma_{wx}(wv,w^{-1})I_{wv,x}I_{w^{-1},wx} \\
& = \gamma_{wx}(wv,w^{-1})\gamma_{wx}(w,v) I_{w,x} I_{v,x} I_{w^{-1},wx}\\
& = \i_{v,x} \gamma_{wx}(wv,w^{-1})\gamma_{wx}(w,v)\overline{\gamma_{wx}(w,w^{-1})} \id_{E_{wx}}.
\end{align*}
This proves \eqref{eq:i-conjugation}. Taking $w\in W_x$ in \eqref{eq:i-conjugation} shows that $W'_x$ is normal in $W_x$ and that $P_{\i,x}$ is central in $C^*_{\gamma_x}(W_x)$.
\end{proof}

For each unitary $\gamma_x$-representation $\pi:W'_x\to \Unitary(H_\pi)$, the operator $\pi(P_{\i,x})$ is the orthogonal projection of $H_\pi$ onto the subspace
\[
\pi(P_{\i,x})H_\pi=\left\{h\in H_\pi\ \middle|\ \pi(w')h=\overline{\i_{w',x}}h\text{ for all }w'\in W'_x\right\}.
\]
If $\pi$ is the restriction to $W'_x$ of a unitary $\gamma_x$-representation of $W_x$, then the fact that $P_{\i,x}$ is central in $C^*_{\gamma_x}(W_x)$ ensures that $\pi(P_{\i,x})H_\pi$ is a $W_x$-invariant subspace of $H_\pi$. In particular, if $\pi$ is irreducible then this subspace is either $0$ or all of $H_\pi$. 

\begin{definition}\label{def:lie-over}
We say that an irreducible unitary $\gamma_x$-representation $\pi$ of $W_x$ \emph{lies over $\overline{\i_x}$} if $\pi(w')=\overline{\i_{w',x}}\id_{H_\pi}$ for all $w'\in W'_x$. (Or, equivalently, if $H_\pi$ contains a nonzero vector $h$ satisfying $\pi(w')h=\overline{\i_{w',x}}h$ for all $w'\in W'_x$.) The set of isomorphism classes of such representations is denoted by $\widehat{W_x}^{\gamma_x}_{\overline{\i_x}}$.
\end{definition}

Recall that we defined $\supp_{E,x}$ to be the subset of $\widehat{W_x}^{\gamma_x}$ consisting of those representations that occur in the contragredient of the representation $I_x:w\mapsto I_{w,x}$. Since the subgroup $W'_x$ acts through the scalar character $\i_x$ in the representation $I_x$, and thus by the scalar $\overline{\i_x}$ in the contragredient, we necessarily have
\[
\supp_{E,x}\subseteq \widehat{W_x}^{\gamma_x}_{\overline{\i_x}}.
\]

\begin{definition}\label{def:completeness-condition}
We say that the module $E$ satisfies the \emph{completeness condition} if for each $x\in X$ we have $\supp_{E,x} = \widehat{W_x}^{\gamma_x}_{\overline{\i_x}}$; that is, if every irreducible unitary $\gamma_x$-representation of $W_x$ that lies over $\overline{\i_x}$ appears in the contragredient of the representation $I_x$. 
\end{definition}

Note that if the group $W'_x$ is trivial for every $x\in X$, then the completeness condition is the condition that appears in Corollary \ref{cor:E-equivalence-easy}: i.e., that $\supp_{E,x}=\widehat{W_x}^{\gamma_x}$ for every $x$.

\begin{lemma}\label{lem:C-completeness}
If the module $E$ satisfies the completeness condition, then we have
\[
C(X,E,W) = \left\{ \sum_{w\in W} b_w w\in C_0(X)\rtimes_{\gamma} W\ \middle|\ \begin{aligned} &\forall x\in X,\ \forall w'\in W_x',\ \forall w\in W:\\ & b_{w'w}(x)  = \i_{w',x}\gamma_{x}(w',w) b_w(x)\end{aligned} \right\}.
\]
\end{lemma}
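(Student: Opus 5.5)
Let $D$ denote the right-hand side. The plan is to combine Proposition \ref{prop:CXEW-support} with the completeness condition. Under completeness, $\supp_{E,x}=\widehat{W_x}^{\gamma_x}_{\overline{\i_x}}$, so \eqref{eq:CXEW-support} becomes
\[
C(X,E,W)=\bigcap_{x\in X}\ \bigcap_{\pi\in \widehat{W_x}^{\gamma_x},\ \pi\ \text{not over}\ \overline{\i_x}} \ker\rho_{x,\pi}.
\]
It therefore suffices to show that an element $c=\sum_w b_w w$ lies in $D$ if and only if $\rho_{x,\pi}(c)=0$ for every $x$ and every irreducible $\pi$ of $W_x$ that does not lie over $\overline{\i_x}$.

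First reduce to a single orbit, as in the proof of Proposition \ref{prop:CXEW-support}. Both conditions only involve the values $b_w(y)$ for $y\in\mathcal{O}=Wx$. For $\rho_{x,\pi}$ this is because it factors through $\res_{\mathcal{O}}$; for $D$ it is visible from the definition. Then pass to the corner $P_x(C(\mathcal{O})\rtimes_\gamma W)P_x\cong C^*_{\gamma_x}(W_x)$, which preserves kernels of $\rho_{x,\pi}\leftrightarrow\pi$. The representations $\pi$ not lying over $\overline{\i_x}$ are exactly those with $\pi(P_{\i,x})=0$, because $P_{\i,x}$ is central by Lemma \ref{lem:Wprime-normal}. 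The intersection of their kernels in $C^*_{\gamma_x}(W_x)$ is therefore the ideal $P_{\i,x}C^*_{\gamma_x}(W_x)$, and in $C(\mathcal{O})\rtimes_\gamma W$ it is the ideal generated by $P_{\i,x}$. Since $\mathcal{O}$ is finite, $C(\mathcal{O})\rtimes_\gamma W=\bigoplus_{y\in\mathcal{O}}\delta_y(C(\mathcal{O})\rtimes_\gamma W)$ as a left module. I expect the ideal in question to be
\[
\{c : Q_y\,\delta_y c=\delta_y c\ \ \forall y\in\mathcal{O}\},\qquad Q_y:=\frac{1}{|W'_y|}\sum_{v\in W'_y}\i_{v,y}\delta_y v,
\]
that is, the ideal generated by the central element $\sum_{y}Q_y$. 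This element is a projection, and equation \eqref{eq:i-conjugation} is exactly what makes it $W$-invariant, hence central in $C(\mathcal{O})\rtimes_\gamma W$.

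The main computation is to rewrite $Q_y\delta_y c=\delta_y c$ coefficientwise. Since $Q_y$ is the average over $W'_y$ of the unitaries $\i_{v,y}\delta_y v$, the condition is equivalent to $\i_{v,y}\delta_y v\cdot\delta_y c=\delta_y c$ for all $v\in W'_y$. Using the multiplication rule, $\delta_y v\cdot b_w w=\delta_y\,\alpha_v(b_w)\gamma(v,w)\,vw$, and $\alpha_v(b_w)(y)=b_w(v^{-1}y)=b_w(y)$ because $v\in W_y$. Comparing coefficients of $vw$ then gives $\i_{v,y}\gamma_y(v,w)b_w(y)=b_{vw}(y)$, which is precisely the defining condition of $D$ at the point $y$. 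Each step above is reversible, so the two conditions are equivalent.

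The main obstacle I anticipate is the identification of the intersection of kernels with the ideal generated by $\sum_y Q_y$, together with checking its centrality; I would handle this through the corner bijection $J\mapsto P_xJP_x$ and the $W$-equivariance formula \eqref{eq:i-conjugation}. A second point needs care. Multiplication by a projection gives the relation only at points $y$ of the orbit, but the relation at $y=wx$ for varying $x$ is the same condition as the one imposed in $D$ at every point. So $D$ is checked pointwise over all $x$, and the global equality follows by intersecting over orbits.
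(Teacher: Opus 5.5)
Your proposal is correct and follows essentially the same route as the paper: reduce to a single orbit, observe that your central projection $\sum_{y\in\mathcal{O}}Q_y$ (the paper's $P_{\i,\mathcal{O}}$) cuts out exactly the right-hand side, and use $P_{\i,\mathcal{O}}P_x=P_{\i,x}$ (equivalently, the corner bijection) together with the completeness condition and Proposition~\ref{prop:CXEW-support} to match the annihilating irreducible representations. Your coefficientwise check that $Q_y\delta_y c=\delta_y c$ is equivalent to the defining relation at $y$ supplies the computation that the paper leaves as ``straightforward''.
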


\begin{proof}
Temporarily let $D(X,E,W)$ denote the right-hand side of the equality asserted in Lemma \ref{lem:C-completeness}. We know (from Proposition \ref{prop:CXEW-support}) that $C(X,E,W)$ is the intersection of the ideals $\res_{\mathcal{O}}^{-1} C(\mathcal{O},E_{\mathcal{O}}, W)$, as $\mathcal{O}$ ranges through the orbit space $W\backslash X$. On the other hand, the fact that $D(X,E,W)$ is defined by a pointwise condition ensures that $D(X,E,W)=\bigcap_{\mathcal{O}} \res_{\mathcal{O}}^{-1} D(\mathcal{O},E_{\mathcal{O}},W)$. So it will suffice to restrict our attention to a single orbit $\mathcal{O}$, and prove that $C(\mathcal{O},E_{\mathcal{O}}, W)=D(\mathcal{O},E_{\mathcal{O}},W)$.

Let $P_{\i,\mathcal{O}}\in C(\mathcal{O})\rtimes_\gamma W$ be the element
\begin{equation}\label{eq:PO-def}
P_{\i,\mathcal{O}}\coloneqq \sum_{z\in \mathcal{O}} |W'_z|^{-1}\sum_{w\in W'_z} \i_{w,z} \delta_z w.
\end{equation}
Straightforward computations, using Lemma \ref{lem:Wprime-normal}, show that $P_{\i,\mathcal{O}}$ is a central projection in $C(\mathcal{O})\rtimes_\gamma W$, and that we have
\begin{equation}\label{eq:PO}
D(\mathcal{O}, E_{\mathcal{O}}, W) = P_{\i,\mathcal{O}}(C(\mathcal{O})\rtimes_\gamma W).
\end{equation}

The equality \eqref{eq:PO} shows that $D(\mathcal{O},E_{\mathcal{O}},W)$ is an ideal in $C(\mathcal{O})\rtimes_\gamma W$, and that for each $x\in \mathcal{O}$ and each $\pi\in \widehat{W_x}^{\gamma_x}$ we have $\rho_{x,\pi}(D(\mathcal{O},E_{\mathcal{O}},W))\neq 0$ if and only if $\rho_{x,\pi}(P_{\i,\mathcal{O}})\neq 0$. A further computation shows that $P_{\i,\mathcal{O}}P_x = P_{\i,x}$, where $P_x=\delta_x 1_W$ as in Example \ref{ex:imprimitivity}, and $P_{\i,x}$ is as defined in Lemma \ref{lem:Wprime-normal}. Recalling that for each $\pi\in \widehat{W_x}^{\gamma_x}$ we defined $\rho_{x,\pi} = (C(\mathcal{O})\rtimes_\gamma W)P_x\otimes_{C^*_{\gamma_x}(W_x)}H_\pi$, we find that 
\[
\begin{aligned}
\rho_{x,\pi}(P_{\i,\mathcal{O}})=0 \iff \pi(P_{\i,\mathcal{O}}P_x)=0 \iff \pi(P_{\i,x})=0 & \iff \pi\not\in \widehat{W_x}^{\gamma_x}_{\overline{\i_x}} \\
& \iff \pi\not\in \supp_{E,x}
\end{aligned}
\]
where the last equivalence is the completeness condition. Proposition \ref{prop:CXEW-support} thus implies that the ideals $C(\mathcal{O},E_{\mathcal{O}},W)$ and $D(\mathcal{O},E_{\mathcal{O}},W)$ are annihilated by the same irreducible representations of $C(\mathcal{O})\rtimes_\gamma W$, so these ideals are equal.
\end{proof}

Combining Lemmas \ref{lem:E-BW-module} and \ref{lem:C-completeness}, we arrive at:

\begin{theorem}\label{thm:general-Morita}
Let $(\alpha,\gamma)$ be a twisted action of a finite group $W$ on a commutative $C^*$-algebra $C_0(X)$, and let $E$ be a Hilbert $C_0(X)$-module equipped with an $(\alpha,\gamma)$-twisted action of $W$, satisfying the completeness condition. The module $E$ implements a Morita equivalence between the fixed-point algebra 
$\Compact_{C_0(X)}(E)^W$ and the ideal 
\[
C(X,E,W) = \left\{ \sum_{w\in W} b_w w\in C_0(X)\rtimes_{\gamma} W\ \middle|\ \begin{aligned} &\forall x\in X,\ \forall w'\in W_x',\ \forall w\in W:\\ & b_{w'w}(x)  = \i_{w',x}\gamma_{x}(w',w) b_w(x)\end{aligned} \right\}
\]
in the twisted crossed product $C_0(X)\rtimes_\gamma W$.
\hfill\qed
\end{theorem}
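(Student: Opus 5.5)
The theorem is a direct assembly of Lemma \ref{lem:E-BW-module} and Lemma \ref{lem:C-completeness}, together with the general fact that a full Hilbert module is an imprimitivity bimodule.

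First, I will apply Lemma \ref{lem:E-BW-module} with $B=C_0(X)$. This makes $E$ a right Hilbert $C_0(X)\rtimes_\gamma W$-module via \eqref{eq:xi-bw-definition} and \eqref{eq:BW-valued-ip}, and it gives
\[
\Compact_{C_0(X)\rtimes_\gamma W}(E)=\Compact_{C_0(X)}(E)^W .
\]

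Second, I will invoke the standard fact, recalled just before Proposition \ref{prop:CXEW-support}, that any right Hilbert $B'$-module $F$ implements a Morita equivalence between $\Compact_{B'}(F)$ and the closed ideal $\overline{\lspan}\{\langle\xi\,|\,\eta\rangle\}$ of $B'$. Here $F=E$ and $B'=C_0(X)\rtimes_\gamma W$. The left inner product is $|\xi\rrangle\llangle\eta|$, and $E$ is full over the ideal $C(X,E,W)$ by its very definition, so $E$ is an imprimitivity bimodule between $\Compact_{C_0(X)}(E)^W$ and $C(X,E,W)$.

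Third, under the completeness condition (Definition \ref{def:completeness-condition}), Lemma \ref{lem:C-completeness} identifies $C(X,E,W)$ with the explicitly described subset of $C_0(X)\rtimes_\gamma W$ given by the pointwise relations $b_{w'w}(x)=\i_{w',x}\gamma_x(w',w)b_w(x)$. Substituting this into the second step yields exactly the statement of the theorem.

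There is essentially no obstacle, since all the substantive work (positivity and completeness of $\llangle\ |\ \rrangle$, and the support computation via the central projections $P_{\i,\mathcal{O}}$) was done in the earlier lemmas. The only point to double-check is that $C(X,E,W)$ is genuinely a two-sided ideal, which is needed for the Morita equivalence to be stated as one with an ideal. This follows from $\llangle\xi\,|\,\eta\rrangle c=\llangle\xi\,|\,\eta c\rrangle$ and $c^*\llangle\xi\,|\,\eta\rrangle=\llangle\xi c\,|\,\eta\rrangle$ for $c\in C_0(X)\rtimes_\gamma W$.
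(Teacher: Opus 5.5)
Your proposal is correct and follows exactly the paper's route: the paper states this theorem as the immediate combination of Lemma \ref{lem:E-BW-module} (giving $\Compact_{C_0(X)\rtimes_\gamma W}(E)=\Compact_{C_0(X)}(E)^W$), the standard fact that a Hilbert module is an imprimitivity bimodule between its compact operators and the closed span of its inner products, and Lemma \ref{lem:C-completeness}, which identifies that span under the completeness condition. Your extra check that $C(X,E,W)$ is a two-sided ideal, via $\llangle\xi\,|\,\eta\rrangle c=\llangle\xi\,|\,\eta c\rrangle$ and $c^*\llangle\xi\,|\,\eta\rrangle=\llangle\xi c\,|\,\eta\rrangle$, is correct, and the paper leaves it implicit.
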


\subsection{Localisation}\label{subsec:localisation}

The localisation procedure that was used to restrict to a single $W$-orbit in the proof of Proposition \ref{prop:CXEW-support} can be generalised, as follows. Let $X$, $W$, $\alpha$, $\gamma$, $E$, and $U$ be as above, and let $Y$ be a $W$-invariant subset of $X$ that is locally closed (that is, $Y=Y_1\setminus Y_2$, where $Y_2$ and $Y_1$ are closed subsets of $X$; we can assume without loss of generality that $Y_1$ and $Y_2$ are $W$-invariant). Then $Y$ is a locally compact Hausdorff space in the subspace topology, and all of the structure that we have defined for $X$ restricts to $Y$: 
\begin{itemize}
\item The action $\alpha$ restricts to an action $\alpha_Y$ of $W$ on $Y$, because $Y$ is assumed $W$-invariant.
\item The  $2$-cocycle $\gamma:W\times W\to C(X,\T)$ yields, upon restriction of functions from $X$ to $Y$, a $2$-cocycle $\gamma_Y:W\times W\to C(Y,\T)$.
\item The Hilbert module $E$ can be localised to the Hilbert $C_0(Y)$-module $E_Y\coloneqq E\otimes_{C_0(X)} C_0(Y)$. Here $C_0(X)$ acts on $C_0(Y)$ via the restriction map $C_0(X) \to C_b(Y)$. 
\item The $(\alpha,\gamma)$-twisted action $U$ of $W$ on $E$ yields an $(\alpha_Y,\gamma_Y)$-twisted action $U_Y$ of $W$ on $E_Y$, through the formula $U_{Y,w}(\xi\otimes b)\coloneqq U_w(\xi)\otimes \alpha_w(b)$.
\end{itemize}

Localisation is functorial with respect to inclusions, in the sense that if $Z\subseteq Y \subseteq X$, where $Z$ and $Y$ are locally closed in $X$, then we have $(\alpha_Y)_Z=\alpha_Z$, $(\gamma_Y)_Z=\gamma_Z$, and $(E_Y)_Z\cong E_Z$ via the canonical isomorphism 
\[
C_0(Y)\otimes_{C_0(Y)} C_0(Z) \xrightarrow{b\otimes c\mapsto bc} C_0(Z).
\]
In the special case where $Y$ is actually a closed subset of $X$, we have a surjective restriction map $E\xrightarrow{\xi\mapsto \xi\restrict_Y} E_Y$, given by writing each $\xi\in E$ in the form $\eta b$ for some $\eta\in E$ and $b\in C_0(X)$, and then defining $\xi\restrict_Y\coloneqq \eta\otimes b\restrict_Y\in E\otimes_{C_0(X)}C_0(Y)$. 

The following observation will be useful in $K$-theory computations.

\begin{lemma}\label{lem:C-localisation}
For each $W$-invariant closed subset $Y$ of $X$, the canonical short exact sequence
\[
0 \to C_0(X\setminus Y)\rtimes_{\gamma_{X\setminus Y}} W \to C_0(X)\rtimes_\gamma W \xrightarrow{\res_Y} C_0(Y)\rtimes_{\gamma_Y} W\to 0
\]
restricts to a short exact sequence
\[
0 \to C(X\setminus Y, E_{X\setminus Y}, W) \to C(X,E,W) \to C(Y,E_Y,W)\to 0.
\]
\end{lemma}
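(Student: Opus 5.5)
We need three things: the image of $C(X,E,W)$ under $\res_Y$ is $C(Y,E_Y,W)$; the intersection $C(X,E,W)\cap \bigl(C_0(X\setminus Y)\rtimes_{\gamma} W\bigr)$ equals $C(X\setminus Y,E_{X\setminus Y},W)$; and exactness in the middle then follows from exactness of the ambient sequence.

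\emph{Image under $\res_Y$.} We argue as in the proof of Proposition \ref{prop:CXEW-support}. For $\xi,\eta\in E$ we have
\[
\res_Y\llangle\xi\ |\ \eta\rrangle=\llangle\xi\restrict_Y\ |\ \eta\restrict_Y\rrangle .
\]
This is checked coefficientwise: $\langle\xi\ |\ U_w\eta\rangle\restrict_Y=\langle\xi\restrict_Y\ |\ U_{Y,w}\eta\restrict_Y\rangle$, because restriction $E\to E_Y$ intertwines $U$ with $U_Y$ and the inner products with restriction of functions. The restriction map $E\to E_Y$ is surjective since $Y$ is closed. Hence $\res_Y$ maps the spanning set of $C(X,E,W)$ onto the spanning set of $C(Y,E_Y,W)$. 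A $*$-homomorphism between $C^*$-algebras has closed range on closed subalgebras, so $\res_Y(C(X,E,W))=C(Y,E_Y,W)$.

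\emph{Kernel.} Set $U=X\setminus Y$ and $J=C_0(U)\rtimes_\gamma W$. Then $C(X,E,W)$ is an ideal of $C_0(X)\rtimes_\gamma W$, so
\[
C(X,E,W)\cap J = C(X,E,W)\cdot J = J\cdot C(X,E,W)\cdot J .
\]
To identify this with $C(U,E_U,W)$, the cleanest route is via the primitive-ideal description \eqref{eq:CXEW-support}. The spectrum of $J$ is the open subset of $\widehat{C_0(X)\rtimes_\gamma W}$ consisting of the $\rho_{x,\pi}$ with $x\in U$. The restriction to $J$ of $\rho_{x,\pi}$ for $x\in U$ is the representation $\rho_{x,\pi}$ of $J$. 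The local data $E_x$, $I_x$, and hence $\supp_{E,x}$, are unchanged when passing to $E_U$, because $(E_U)_x\cong E_x$ by functoriality of localisation. An ideal of $J$ is determined by which irreducible representations of $J$ it annihilates. By Proposition \ref{prop:CXEW-support} applied to $X$, an irreducible $\rho_{x,\pi}$ of $J$ with $x\in U$ fails to kill $C(X,E,W)\cap J$ exactly when $\pi\in\supp_{E,x}$. By the same proposition applied to $U$, this is also exactly when it fails to kill $C(U,E_U,W)$. One point must be verified here: a representation of $J$ that is nonzero on $C(X,E,W)$ is nonzero on $C(X,E,W)\cap J$. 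This holds because the irreducible representation extends uniquely to the ambient algebra and is nondegenerate on $J$, so it is nonzero on $J\cdot C(X,E,W)$. Both ideals of $J$ therefore have the same hull, and are equal.

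\emph{Alternative for the kernel.} One can instead argue directly with the modules: $E_U\cong EC_0(U)$ is a closed submodule of $E$, and $\llangle\xi b\ |\ \eta c\rrangle=\bar b\,\llangle\xi\ |\ \eta\rrangle\, c$ spans $J\,C(X,E,W)\,J$. The hull argument seems safer, though.

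\emph{Main obstacle.} The delicate step is the identification of the kernel. In particular, we must check carefully that $C(U,E_U,W)$, defined intrinsically, sits inside $C_0(X)\rtimes_\gamma W$ as $C(X,E,W)\cap J$. This requires the compatibility of $\rho_{x,\pi}$ and $\supp$ under the inclusion $J\subseteq C_0(X)\rtimes_\gamma W$. Everything else is formal.
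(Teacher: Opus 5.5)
Your proposal is correct and follows the paper's own argument. Surjectivity onto $C(Y,E_Y,W)$ comes from the identity $\res_Y\llangle \xi\ |\ \eta\rrangle = \llangle \xi\restrict_Y\ |\ \eta\restrict_Y\rrangle$, and the identification of $C(X,E,W)\cap\bigl(C_0(X\setminus Y)\rtimes_\gamma W\bigr)$ with $C(X\setminus Y,E_{X\setminus Y},W)$ comes from the primitive-ideal description \eqref{eq:CXEW-support}; the extra checks you supply (nondegeneracy of $\rho_{x,\pi}$ on the ideal, and $\supp_{E_{X\setminus Y},x}=\supp_{E,x}$) are details the paper leaves implicit.
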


\begin{proof}
The identity $\res_Y\llangle \xi\ |\ \eta\rrangle = \llangle \xi\restrict_Y\ |\ \eta\restrict_Y\rrangle$ ensures that $\res_Y(C(X,E,W))=C(Y,E_Y,W)$. On the other hand, the identity  \eqref{eq:CXEW-support} implies that $C(X,E,W)\cap (C_0(X\setminus Y)\rtimes_\gamma W)=C(X\setminus Y, E_{X\setminus Y},W)$.
\end{proof}

As a special case, suppose that $X=Y\sqcup Z$, a disjoint union of $W$-invariant open subsets. Then we have a canonical isomorphism
\begin{equation}\label{eq:C-direct-sum}
C(X,E,W) \cong C(Y,E_Y,W)\oplus C(Z,E_Z,W).
\end{equation}

\subsection{A special case}

In this section we explain the relationship between our Morita equivalence result Theorem \ref{thm:general-Morita} and an earlier result of Afgoustidis and Aubert \cite{AA} (which generalised a still earlier result of Plymen and Leung \cite{Plymen-Leung}, which was in turn an adaptation of an argument of Wassermann \cite{Wassermann}.)

\begin{corollary}\label{cor:AA}
Let $X$, $E$, $W$,  $\alpha$, and $\gamma$ be as above. Suppose that the completeness condition (Definition \ref{def:completeness-condition}) is satisfied, and assume in addition that:
\begin{enumerate}[\rm(1)]
\item $\i_{w',x}=1$ for all $x\in X$ and all $w'\in W'_x$;
\item $\gamma_x(v,w)=\gamma_y(v,w)$ for all $x,y\in X$ and all $v,w\in W$; 
\item there are subgroups $W'$ and $R$ in $W$ such that $W=W'\rtimes R$;
\item there is a point $x_0\in X$ with $W_{x_0}=W$; and
\item $W'_x=W_x\cap W'$ for all $x\in X$.
\end{enumerate}
Then the $C^*$-algebra $\Compact_{C_0(X)}(E)^W$ is Morita equivalent to $C_0(X/W')\rtimes_{\gamma} R$.
\end{corollary}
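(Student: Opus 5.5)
The plan is to work inside the ideal $C(X,E,W)$ supplied by Theorem \ref{thm:general-Morita}, and cut it down by a single projection that averages over $W'$. By Theorem \ref{thm:general-Morita}, $\Compact_{C_0(X)}(E)^W$ is Morita equivalent to $C(X,E,W)$. Hypotheses (1) and (2) simplify the defining condition of that ideal to
\[
b_{w'w}(x)=\gamma(w',w)\,b_w(x)\qquad\text{for all } x\in X,\ w'\in W'_x,\ w\in W,
\]
where $\gamma$ is now a constant $\T$-valued $2$-cocycle. It therefore suffices to show that $C(X,E,W)$ is Morita equivalent to $C_0(X/W')\rtimes_\gamma R$. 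I will do this with a full corner. Note that if $C_0(X)$ is non-unital, the projection below lives in the multiplier algebra.

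\textbf{Step 1: $\gamma$ is trivial on $W'$.} By (4) and (5), $W'_{x_0}=W'$. By (1), $I_{w',x_0}=\id$ for every $w'\in W'$. The relation \eqref{eq:I-gamma} then forces $\gamma(v,w)=1$ for $v,w\in W'$. Consequently
\[
P\coloneqq |W'|^{-1}\sum_{w'\in W'} w'
\]
is a projection in $M(C_0(X)\rtimes_\gamma W)$.

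\textbf{Step 2: $P(C_0(X)\rtimes_\gamma W)$ lies in $C(X,E,W)$.} Compute the coefficients of $P\cdot bw$. Averaging over all of $W'$ yields exactly the relation $b_{w'w}=\gamma(w',w)b_w$ for every $w'\in W'$, using the cocycle identity together with Step 1. By (5), $W'_x\subseteq W'$, so this relation holds in particular for $w'\in W'_x$, and hence $P(C_0(X)\rtimes_\gamma W)\subseteq C(X,E,W)$.

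\textbf{Step 3: $P$ is full in $C(X,E,W)$.} I need the closed ideal generated by $P$ in $C_0(X)\rtimes_\gamma W$ to coincide with $C(X,E,W)$. By Proposition \ref{prop:CXEW-support} together with the completeness condition, it suffices to show the following: for each $x$ and each $\pi$ lying over the trivial character of $W'_x$, we have $\rho_{x,\pi}(P)\neq 0$.

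To see this, compress by $P_x=\delta_x1_W$. Only the terms with $w'\in W'\cap W_x=W'_x$ survive (again by (5)), giving
\[
P_xPP_x=\frac{|W'_x|}{|W'|}\,P_{\i,x}.
\]
The projection $P_{\i,x}$ acts as the identity on such $\pi$, so $\rho_{x,\pi}(P)\neq 0$. Therefore $C(X,E,W)$ is Morita equivalent to the corner $P(C_0(X)\rtimes_\gamma W)P$.

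\textbf{Step 4: identify the corner.} Using (3), write each $w\in W$ uniquely as $w=w'r$ with $w'\in W'$ and $r\in R$. A general element of the corner is then a sum $\sum_{r\in R} P b_r r P$. Since $R$ normalises $W'$, I would show that $rP=c_r\,Pr$ for an appropriate unitary $c_r$ built from values of $\gamma$, and that $P b P = P\,\bar b$, where $\bar b$ is the $W'$-average of $b$. From this, the map
\[
\sum_r \bar b_r\, r\;\longmapsto\; \sum_r P\,\bar b_r\, r
\]
should give an isomorphism $C_0(X)^{W'}\rtimes_{\gamma|_R} R\cong P(C_0(X)\rtimes_\gamma W)P$, possibly after modifying $\gamma|_R$ by a coboundary, and $C_0(X)^{W'}=C_0(X/W')$.

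\textbf{Main obstacle.} I expect Step 4 to be the hardest part: the cocycle bookkeeping that checks multiplicativity and $*$-compatibility of this map. The key inputs are the $2$-cocycle identity \eqref{eq:twisted-2-cocycle} with constant $\gamma$ and the triviality of $\gamma$ on $W'\times W'$ from Step 1. If $\gamma(w',r)$ and $\gamma(r,w')$ do not already cancel, I would first normalise $\gamma$ within its cohomology class so that $\gamma(w'r,w''s)$ depends only on $r,s$. This normalisation does not change the crossed product up to isomorphism.
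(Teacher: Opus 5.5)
Steps 1--3 are sound. There is one small correction in Step 2: for $a\in C_0(X)\rtimes_\gamma W$, the coefficients $c_w$ of $Pa$ satisfy $c_{w'w}=\gamma(w',w)\,\alpha_{w'}(c_w)$ for $w'\in W'$, not $c_{w'w}=\gamma(w',w)c_w$. The defining relation of $C(X,E,W)$ only appears after evaluating at points $x$ with $w'\in W'_x$, but that is all you need.

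The genuine gap is in Step 4, and it is not just bookkeeping. For $r\in R$ and $w'\in W'$ one has, in $C_0(X)\rtimes_\gamma W$,
\[
r\,w'\,r^*=\lambda_r(w')\,rw'r^{-1},\qquad \lambda_r(w')\coloneqq\gamma(r,w')\,\gamma(rw',r^{-1})\,\overline{\gamma(r^{-1},r)}.
\]
Because $\gamma\equiv 1$ on $W'\times W'$, the map $\lambda_r$ is a character of $W'$. Hence $rPr^*=|W'|^{-1}\sum_{u\in W'}\lambda_r(r^{-1}ur)\,u$ equals $P$ if $\lambda_r\equiv 1$, and is a projection orthogonal to $P$ otherwise. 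So a relation $rP=c_rPr$ with $c_r$ a scalar holds if and only if $\lambda_r\equiv 1$, and then $c_r=1$.

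The inputs you name (the cocycle identity for constant $\gamma$, and Step 1) do not force $\lambda_r\equiv 1$. Take $W=\langle a\rangle\times\langle b\rangle\cong(\Z/2\Z)^2$, $W'=\langle a\rangle$, $R=\langle b\rangle$, and let $\gamma$ be the cocycle of the projective representation $a\mapsto\smallbmat{0&1\\1&0}$, $b\mapsto\smallbmat{1&0\\0&-1}$. Then $\gamma(a,a)=1$ but $\lambda_b(a)=-1$. For $X$ a point, the corner $PC^*_\gamma(W)P\cong\C$ is not $\C\rtimes_\gamma R\cong\C^2$.

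Your fallback of renormalising $\gamma$ does not help either. A coboundary $\mu$ that keeps $\gamma|_{W'\times W'}\equiv 1$ must restrict to a character on $W'$, and it changes $\lambda_r(w')$ only by the factor $\mu(w')\overline{\mu(rw'r^{-1})}$. In this example that factor is $1$, since $W'$ is central. On the other hand, any cocycle with $\gamma(w'r,w''s)$ depending only on $(r,s)$ has $\lambda\equiv 1$. So no such normalisation exists here.

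The missing idea is a second use of hypotheses (1) and (4), through Lemma \ref{lem:Wprime-normal}. Formula \eqref{eq:i-conjugation} with $x=x_0$, $w=r$ and $v=w'\in W'=W'_{x_0}$ reads
\[
1=\i_{rw'r^{-1},x_0}=\i_{w',x_0}\,\gamma(rw',r^{-1})\,\gamma(r,w')\,\overline{\gamma(r,r^{-1})}=\lambda_r(w'),
\]
using $\gamma(r,r^{-1})=\gamma(r^{-1},r)$ for constant $\gamma$. So $rP=Pr$ exactly, and no coboundary adjustment is needed.

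From there the corner identification goes through:
\begin{enumerate}[(i)]
\item $P\bar b=\bar bP$ for $W'$-invariant $\bar b$, $w'P=P$, and $PbP=\bar bP$.
\item Hence every $P\,b\,w\,P$ with $w=w'r$ is a scalar multiple of $P\bar b\,r$.
\item The map $\sum_r\bar b_r r\mapsto P\sum_r\bar b_r r$ is a $*$-isomorphism from $C_0(X)^{W'}\rtimes_\gamma R$ onto the corner. It is multiplicative because $P$ commutes with its domain, and injective because each $w\in W$ factors uniquely as $w'r$.
\end{enumerate}

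With this repair, your full-corner argument is a correct and more self-contained alternative to the paper's proof. The paper instead writes $C(X,E,W)=C(X,E,W')\rtimes_\gamma R$, obtains $C(X,E,W')\sim C_0(X/W')$ from Theorem \ref{thm:general-Morita} applied to the $W'$-module $C_0(X)$, and then invokes Bui's theorem on equivariant Morita equivalence. The identity $\lambda_r\equiv 1$ is also what is implicitly needed there, to make the $R$-actions on the two sides of that equivalence compatible.
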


\begin{proof}
In view of condition (2) we may regard the cocycle $\gamma$ as taking values in $\T$. We first note that $\gamma(v,w)=1$ for all $v,w\in W'$: indeed, conditions (4) and (5) together imply that $W'=W'_{x_0}$, and then using  the relation \eqref{eq:I-gamma} and condition (1) we find that for $v,w\in W'$ we have
\[
\gamma(v,w) = \i_{vw,x_0}\overline{\i_{v,x_0}\i_{w,x_0}} =1.
\]

Theorem \ref{thm:general-Morita} implies that $\Compact_{C_0(X)}(E)^W$ is Morita equivalent to the $C^*$-algebra
\[
C=C(X,E,W) = \left\{ \sum_{w\in W} b_w w\in C_0(X)\rtimes_{\gamma} W\ \middle|\ \begin{aligned} &\forall x\in X,\ \forall w'\in W_x',\ \forall w\in W:\\ & b_{w'w}(x)  = \gamma(w',w) b_w(x)\end{aligned} \right\}.
\]
The decomposition $W=W'\rtimes R$ gives a decomposition of $C^*$-algebras $C_0(X)\rtimes_{\gamma} W = (C_0(X)\rtimes W')\rtimes_\gamma R$, where on the right-hand side the crossed product $C_0(X)\rtimes W'$ has no twisting (because, as we just observed, $\gamma$ is identically $1$ on $W'$); and where the $R$-action on $C_0(X)\rtimes W'$ is given by 
\[
\beta_r(b w') = r\cdot bw'\cdot r^* = \alpha_r(b) \gamma(r,w')\gamma(rw',r^{-1})\overline{\gamma}(r^{-1},r) rw'r^{-1}
\]
for $r\in R$, $b\in C_0(X)$, and $w'\in W'$. Condition (5) implies that under this identification we have $C=C'\rtimes_\gamma R$, where $C' = C(X,E,W')$.

Now consider the Hilbert $C_0(X)$-module $B=C_0(X)$, equipped with the $W'$-action $\alpha$. This is an action by $(\alpha,1)$-twisted unitaries, where $1$ denotes the  $2$-cocycle $(v,w)\mapsto 1$.  We have $\Compact_{C_0(X)}(B)=C_0(X)$, and $\Compact_{C_0(X)}(B)^{W'} = C_0(X)^{W'}\cong C_0(X/W')$. Each of the Hilbert spaces $B_x$ is one-dimensional, and for each $w\in W'_x$ the operator on $B_x$ induced by $\alpha_w$ is the identity. It follows that $C(X,B,W')=C(X,E,W')=C'$, and so Theorem \ref{thm:general-Morita} applied to the module $B$ gives a Morita equivalence between $C'$ and $C_0(X/W')$. 

The group $R$ acts on the bimodule $B$ through the $C^*$-algebra automorphisms $\alpha_r$. It is easy to check that these operators satisfy the conditions of \cite[Theorem 2.3]{Bui-1995} with respect to the inner products ${}_{C_0(X)^{W'}}\langle b\ |\ c\rangle = \sum_{w\in W'}\alpha_w(bc^*)$  and $\langle b \ |\ c\rangle_{C'} = \llangle b \ |\ c \rrangle = \sum_{w\in W'} \xi\alpha_w(\eta)w$ (see \eqref{eq:BW-valued-ip}), and so the cited theorem of Bui  implies that $C_0(X)^{W'}\rtimes_\gamma R$ is Morita equivalent to $C'\rtimes_{\gamma} R$. We observed above that $C'\rtimes_{\gamma} R = C$ is Morita equivalent to $\Compact_{C_0(X)}(E)^W$, so this completes the proof.
\end{proof}

Corollary \ref{cor:AA} is closely related to \cite[Theorem 1.4]{AA}. The assumptions (1)--(4) of Corollary \ref{cor:AA} are all explicitly imposed in \cite{AA}, but condition (5) is not. Instead, in the situation considered in \cite{AA} one has for each $x\in X$ a decomposition $W_x = W_x'\rtimes R^x$ (where we are using a superscript rather than a subscript to avoid confusion with the stabiliser of $x$ inside the group $R=R_{x_0}$), and Afgoustidis and Aubert impose the following assumption (cf.~\cite[Assumption 1.3]{AA}):
\begin{enumerate}
\item[(5$'$)] For each $x\in X$ we have $W'_x\subseteq W_x\cap W'$, and $R^x$ is isomorphic to a subgroup of $R$.
\end{enumerate}
In the general setting that we are considering here, conditions (1)--(4) and (5$'$) are not equivalent to (1)--(5), and the former set of conditions does not always imply that $\Compact_{C_0(X)}(E)^W$ is Morita equivalent to $C_0(X/W')\rtimes_\gamma R$. However, in the examples of $p$-adic classical groups considered in \cite[Part II]{AA} Afgoustidis and Aubert prove that whenever condition (5$'$) is satisfied, one in fact has the stronger property that $R^x$ is actually a subgroup of $R$ for every $x\in X$. This stronger property is easily seen to imply our property (5) (in the presence of properties (3) and (4)).

\begin{example}\label{example:Wassermann}
Let $G$ be a \emph{real} reductive group. Let $P$ be a parabolic subgroup of $G$, with Langlands decomposition $P=MAN$, and let $\sigma$ be a discrete-series representation of $M$. The group $W= \{w\in N_G(A)/M\ |\ {}^w\sigma\cong\sigma\}$ acts in a natural way on the real vector space $\germ{a}^*$, where $\germ{a}$ is the Lie algebra of the split component $A$. This action lifts to an action on the module $C_0(\germ{a}^*,\Ind_P^G(H_\sigma))$ by normalised intertwining operators as constructed by Knapp and Stein \cite{Knapp-Stein-II}. Results of Knapp and Stein (particularly \cite[Lemma 14.1]{Knapp-Stein-II}) show that the point $x_0=0\in \germ{a}^*$ satisfies the hypotheses of Corollary \ref{cor:AA}, and we thus recover the Morita equivalence
\begin{equation}\label{eq:Wassermann}
C_0(\germ{a}^*,\Compact(\Ind_P^G(H_\sigma)))^W \sim C_0(\germ{a}^*/W')\rtimes R
\end{equation}
first established in \cite{Wassermann}. See \cite{CCH-1} and \cite{CHST} for further details. We will give another proof of \eqref{eq:Wassermann} below, in  Remark \ref{rem:Wassermann-proof2}.
\end{example}

\section{\texorpdfstring{Reduced $C^*$-algebras of $p$-adic reductive groups}{Reduced C*-algebras of p-adic reductive groups}}\label{sec:p-adic}

\subsection{Background on parabolic induction and intertwining operators}\label{sec:p-adic-background}

Let $F$ be a nonarchimedean local field of characteristic $0$, and let $G$ be the group of $F$-points of a connected reductive group $\mathbb{G}$ defined over $F$. (See, e.g., \cite[Chapter V]{Renard} for the basic structural theory of $p$-adic reductive groups that we shall use here.) 

Let $P$ be a parabolic subgroup of $G$ with Levi factor $M$ and unipotent radical $N$. (As usual, we mean by this that $P$, $M$, and $N$ are the groups of $F$-points of appropriate subgroups of $\mathbb{G}$ that are defined over $F$.) Fix a discrete-series representation $\sigma:M\to \Unitary(H_\sigma)$: this means that $\sigma$ is an irreducible unitary representation of $M$ whose matrix coefficients are square-integrable modulo the centre of $M$. Let $K$ be a maximal compact subgroup of $G$, in \emph{good position} with respect to $M$ (see \cite[\S 4.4]{Bruhat-Tits}) so that $G=KP$. 

Let $X$ be the compact torus of {unitary unramified characters} of $M$: that is, continuous homomorphisms $M\to \T$ that are trivial on the open subgroup
\[
M^\circ \coloneqq \left\{m\in M\ \middle|\ |\phi(m)|_F=1\text{ for all }\phi\in \Hom_F(M,F^\times) \right\}.
\]
Here $|\, \cdot\, |_F$ denotes the absolute value on $F$, and $\Hom_F(M,F^\times)$ is the group of $F$-rational characters of $M$. The group $M/M^\circ$ is free-abelian of finite rank, and so its Pontryagin dual $X$ is isomorphic, as a topological group, to $\T^n$ for some $n$. 

Let $W_M = N_G(M)/M$. This finite group acts on $X$ by $s:\chi\mapsto {}^s\chi$, where ${}^s\chi(m)=\chi(\tilde{s}^{-1}m\tilde{s})$, with $\tilde{s}\in N_G(M)$ any representative of the coset $s\in W_M$. The semidirect product group $X\rtimes W_M$ then acts on $X$ by $(\psi s)\chi\coloneqq \psi {}^s\chi$. The group $X\rtimes W_M$ also acts on the set $E_2(M)$ of isomorphism classes of discrete-series representations of $M$, by $(\psi s):\tau\mapsto {}^s\tau\otimes \psi^{-1}$, where ${}^s\tau$ denotes the isomorphism class of the representation $m\mapsto \tau(\tilde{s}^{-1}m\tilde{s})$, for any representative $\tilde{s}\in G$ of $s\in W_M$. 

\begin{remark}
Note that we have chosen to let $\psi\in X$ act on $X$ by multiplying by $\psi$, and on $E_2(M)$ by tensoring with $\psi^{-1}$. This choice is convenient for what follows, though it does introduce a potential for ambiguity in the case where $M$ is compact modulo centre, and $X$ is (canonically identified with) a subset of $E_2(M)$. In practice, we shall only refer to the action on $E_2(M)$ in Definition \ref{def:W} and Lemma \ref{lem:W-WM}, and so the ambiguity is relatively benign.
\end{remark}

\begin{definition}\label{def:W}
Let
\[
W \coloneqq (X\rtimes W_M)_{\sigma} = \{ \psi s\in X\rtimes W_M\ |\ {}^s \sigma\cong \sigma\otimes\psi\}.
\]
\end{definition}

It is clear from the definitions that the quotient map $X\rtimes W_M\to W_M$ restricts to a surjection $W\to W_M(X\sigma)$, where
\[
W_M(X\sigma) \coloneqq \{s\in W_M\ |\ {}^s\sigma \cong \sigma\otimes \psi\text{ for some }\psi\in X\};
\]
and the kernel of this surjection is
\[
X_\sigma\coloneqq \{\psi\in X\ |\ \sigma\otimes \psi\cong \sigma\}.
\]
In particular, since the groups $X_\sigma$ and $W_M$ are finite (see, eg, \cite[V.2.7 Lemma]{Renard}), so is $W$. The isotropy groups for the action of $W$ on $X$ are (canonically isomorphic to) the isotropy groups for the action of $W_M$ on the orbit $X\sigma\subset E_2(M)$:

\begin{lemma}\label{lem:W-WM}
For all $\chi_1,\chi_2\in X$, the quotient map $X\rtimes W_M\to W_M$ restricts to a bijection
\[
\{\psi s\in W\ |\ (\psi s)\cdot\chi_1 = \chi_2\} \xrightarrow{\cong} \{s\in W_M\ |\ {}^s(\sigma\otimes\chi_1)\cong \sigma\otimes \chi_2\},
\]
whose inverse is given by $s\mapsto {}^s\overline{\chi_1}\chi_2 w$. In particular, for each $\chi\in X$ the projection $X\rtimes W_M\to W_M$ restricts to an isomorphism of groups
\[
W_\chi \xrightarrow{\cong} (W_M)_{\sigma\otimes \chi} = \{s\in W_M\ |\ {}^s(\sigma\otimes\chi)\cong \sigma\otimes \chi\},
\]
with inverse $s\mapsto {}^s\overline{\chi}\chi s$.
\end{lemma}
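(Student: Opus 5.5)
The plan is to unwind the definitions and check directly that both maps are well defined and mutually inverse. The only identities needed are the conventions $(\psi s)\chi = \psi\,{}^s\chi$ for the action of $X\rtimes W_M$ on $X$, and ${}^s(\tau\otimes\chi) \cong {}^s\tau\otimes{}^s\chi$ for the action on representations. The second holds because ${}^s\chi(m)=\chi(\tilde s^{-1}m\tilde s)$ is defined by the same conjugation as ${}^s\tau$.

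First, forward direction. Let $\psi s\in W$ with $\psi\,{}^s\chi_1=\chi_2$. Then ${}^s\sigma\cong\sigma\otimes\psi$ by Definition~\ref{def:W}, and so
\[
{}^s(\sigma\otimes\chi_1)\cong {}^s\sigma\otimes{}^s\chi_1\cong \sigma\otimes\psi\,{}^s\chi_1=\sigma\otimes\chi_2 .
\]
Hence the image $s$ of $\psi s$ lies in the right-hand set.

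Second, the candidate inverse. Let $s\in W_M$ satisfy ${}^s(\sigma\otimes\chi_1)\cong\sigma\otimes\chi_2$, and put $\psi\coloneqq {}^s\overline{\chi_1}\,\chi_2\in X$. The inverse in the statement is written $s\mapsto {}^s\overline{\chi_1}\chi_2 w$; I read the trailing $w$ as $s$, i.e.\ the map is $s\mapsto\psi s$. Tensoring the hypothesis with ${}^s\overline{\chi_1}={}^s(\chi_1^{-1})$ gives
\[
{}^s\sigma\cong \sigma\otimes\chi_2\otimes{}^s\overline{\chi_1}=\sigma\otimes\psi,
\]
so $\psi s\in W$. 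Moreover $(\psi s)\chi_1=\psi\,{}^s\chi_1=\chi_2$. The composite $s\mapsto\psi s\mapsto s$ is clearly the identity.

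Third, injectivity. If $\psi s$ and $\psi' s$ both carry $\chi_1$ to $\chi_2$, then $\psi\,{}^s\chi_1=\psi'\,{}^s\chi_1$, so $\psi=\psi'$. In fact the forward condition forces $\psi=\chi_2\,{}^s\overline{\chi_1}$, so the two maps are mutual inverses.

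For the isotropy statement, take $\chi_1=\chi_2=\chi$. The map $W_\chi\to(W_M)_{\sigma\otimes\chi}$ is the restriction of the group homomorphism $X\rtimes W_M\to W_M$. The first part shows it is a bijection onto $(W_M)_{\sigma\otimes\chi}$, with inverse $s\mapsto{}^s\overline\chi\,\chi\, s$, so it is a group isomorphism.

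None of the steps is a serious obstacle. The only points needing care are the sign and ordering conventions: $\psi$ acts on $E_2(M)$ by $\otimes\psi^{-1}$, so the orbit condition must be translated correctly, and ${}^s$ must be compatible with tensor products. I would state these two conventions explicitly at the start and then carry out the computations above.
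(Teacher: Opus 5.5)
Your proposal is correct and takes essentially the same route as the paper. The paper writes a single chain of equivalences using $(\psi s)\chi_1=\psi\,{}^s\chi_1$ and ${}^s(\sigma\otimes\chi_1)\cong{}^s\sigma\otimes{}^s\chi_1$, which forces $\psi=\overline{{}^s\chi_1}\chi_2$; you do the same by splitting it into the forward direction, the candidate inverse, and uniqueness of $\psi$, and you are right to read the trailing $w$ in the inverse formula as $s$.
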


\begin{proof}
For all $\psi\in X$ and $s\in W_M$ we have
\[
\begin{aligned}
& \psi s\in W\ \text{and}\ (\psi s)\cdot \chi_1=\chi_2 \\
\iff & {}^s\sigma\cong \sigma\otimes \psi\ \text{and}\ \psi {}^s\chi_1 = \chi_2 \\
\iff &{}^s\sigma\otimes {}^s\chi_1 \cong \sigma\otimes \psi{}^s\chi_1\ \text{and}\ \psi{}^s\chi_1=\chi_2\\
\iff &{}^s(\sigma\otimes\chi_1) \cong \sigma\otimes\chi_2\ \text{and}\ \psi=\overline{{}^s\chi_1}\chi_2.
\end{aligned}
\]
So if ${}^s(\sigma\otimes\chi_1)\cong \sigma\otimes\chi_2$ then $\psi\coloneqq {}^s\overline{\chi_1}\chi_2$ is the unique element of $X$ for which $\psi s\in W$ and $(\psi s)\cdot \chi_1=\chi_2$.
\end{proof}

We next consider the family of unitary parabolically induced representations $\Ind_P^G(\sigma\otimes\chi)$ for $\chi\in X$, defined by pulling the representation $\sigma\otimes\chi$ of $M$ back to the parabolic subgroup $P$ along the quotient map $P\to M$, and then applying Mackey's unitary induction functor. Since  $G=KP$ every $P$-equivariant function on $G$ is determined by its restriction to the compact subgroup $K$, and so we can realise these induced representations on a Hilbert space of $K\cap P$-equivariant functions $K\to H_\sigma$; this is the `compact picture' of parabolic induction. Since the unramified characters $\chi$ are trivial on every compact subgroup of $G$, the Hilbert space in the compact picture does not depend on $\chi$. We denote this Hilbert space by $H$. Explicitly, 
\begin{equation}\label{eq:HInd-definition}
H = \big\{\xi\in L^2(K,H_\sigma)\ \big|\ \xi(kmn)=\sigma(m)^* \xi(k)\text{ for all }k\in K,m\in K\cap M,n\in K\cap N \big\}
\end{equation}
with inner product $\langle\xi\ |\ \eta\rangle \coloneqq \int_K \langle \xi(k)\ |\ \eta(k)\rangle\, \dd k$. 
See for instance \cite[Section 2.1]{AA} for further details.

For each $s\in W_M$ and each $\chi\in X$ we consider the normalised intertwining operator
\[
R_{s^{-1}Ps|P}(\sigma\otimes\chi):\Ind_P^G(\sigma\otimes\chi) \to \Ind_{s^{-1}Ps}^G(\sigma\otimes\chi)
\]
as in \cite[Theorem 2.1]{Arthur-Intertwining-I} (where our $\sigma\otimes\chi$ is denoted $\sigma_\chi$). These operators can be used to define intertwining operators between the representations $\Ind_P^G(\sigma\otimes\chi)$, via the following construction from \cite[Section 2]{Arthur-Elliptic}.

\begin{definition}\label{def:pI}
\begin{enumerate}[(1)]
\item For each $s\in W_M=N_G(M)/M$ we choose a lift $\tilde{s}\in K\cap N_G(M)$.
\item For each $w=\psi s\in {W}$ we choose an $M$-equivariant unitary isomorphism $T_{w}: {}^{\tilde{s}}\sigma \to \sigma\otimes \psi$.
\item For each $T\in\Unitary(H_\sigma)$ we let $\mu_T\in \Unitary(L^2(K,H_\sigma))$ be the operator of pointwise multiplication by $T$: $(\mu_T\xi)(k)=T \xi(k)$.
\item For each $\ell\in K$ we let $\rho_\ell\in \Unitary (L^2(K,H_\sigma))$ be the operator of right-translation by $\ell$: $(\rho_\ell\xi)(k) = \xi(k\ell)$.
\item We then define, for each $w=\psi s\in W$ and each $\chi\in X$, a unitary operator $I_{w,\chi}:H\to H$ by the following commuting diagram:
\[
\xymatrix@C=60pt{
\Ind_P^G(\sigma\otimes \chi)\ar[d]^-{\coloneqq I_{w, \chi}}  \ar[r]^-{R_{s^{-1}Ps|P}(\sigma\otimes\chi)} & \Ind_{s^{-1}Ps}^G(\sigma\otimes \chi) \ar[d]^-{\rho_{\tilde{s}}}  \\  \Ind_P^G(\sigma\otimes \psi {}^s\chi) & \Ind_P^G({}^{\tilde{s}}\sigma\otimes {}^s\chi) \ar[l]_-{\mu_{T_w}} 
}
\]
\end{enumerate}
\end{definition}

Let us explain how in the special case $w\in W_\chi$, the above construction is the same as the one in \cite[Section 2]{Arthur-Elliptic}. In \cite{Arthur-Elliptic} Arthur associates an intertwining operator $R(s,\sigma\otimes \chi):\Ind_P^G(\sigma\otimes\chi)\to \Ind_P^G(\sigma\otimes \chi)$ to each $s\in (W_M)_{\sigma\otimes \chi}$ by choosing an isomorphism $J_s:{}^{\tilde s}(\sigma\otimes\chi)\to \sigma\otimes \chi$, and then letting $R(s,\sigma\otimes\chi)$ be the operator making the diagram
\[
\xymatrix@C=60pt{
\Ind_P^G(\sigma\otimes \chi)\ar[d]^-{\coloneqq R(s,\sigma\otimes\chi)}  \ar[r]^-{R_{s^{-1}Ps|P}(\sigma\otimes\chi)} & \Ind_{s^{-1}Ps}^G(\sigma\otimes \chi) \ar[d]^-{\rho_{\tilde s}}  \\  \Ind_P^G(\sigma\otimes \chi) & \Ind_P^G({}^{\tilde s}\sigma\otimes {}^s\chi) \ar[l]_-{\mu_{J_s}} 
}
\]
commute. The operator $J_s$ can alternatively be viewed as an isomorphism $T_{w}:{}^{\tilde s}\sigma\to \sigma \otimes \psi$, where $\psi={}^s{\overline{\chi}}\chi$ is the unique element of $X$ for which $\psi s\in W_\chi$ (cf.~Lemma \ref{lem:W-WM}). So the choice of a $J_s$ for each $s\in (W_M)_{\sigma\otimes\chi}$, as in Arthur's construction, is equivalent to the choice of a $T_{w}$ for each $w\in W_\chi$, as in Definition \ref{def:pI}. Comparing the two commutative diagrams above shows that choosing $J_s=T_{w}$ gives
\begin{equation}\label{eq:I-R-equality}
I_{ w,\chi} = R(s,\sigma\otimes\chi)\quad\text{ for all }\quad w=\psi s\in W_\chi.
\end{equation}

The next theorem collects some important properties of the intertwining operators $I_{w, \chi}$, due principally to Arthur, Langlands, and Harish-Chandra.

\begin{theorem}\label{thm:pI-properties}
\begin{enumerate}[\rm(1)]
\item For all $\chi\in X$, $w\in W$, and $g\in G$ we have
\[
I_{w,\chi}\Ind_P^G(\sigma\otimes\chi)(g) = \Ind_P^G(\sigma\otimes w\chi)(g) I_{ w,\chi}.
\]
\item For all $w_1=\psi_1 s_1$ and $w_2=\psi_2 s_2$ in $W$ we have
\[
I_{w_2,  w_1\chi}I_{ w_1,\chi} = \overline{\gamma}( w_2,w_1) I_{ w_2 w_1,\chi},
\]
where $\gamma : {W}\times {W}\to \T$ is the $2$-cocycle defined by
\[
T_{w_2}T_{w_1} = \overline{\gamma}( w_2, w_1) T_{ w_2 w_1}\sigma\left((\widetilde{s_2s_1})^{-1}\widetilde{s_2}\widetilde{s_1}\right),
\]
and where the meaning of $\tilde{s}$ is as in Definition \ref{def:pI}(1).
\item For each $w\in {W}$ the map $\chi\mapsto I_{w,\chi}$ is continuous in the {strong-operator} topology on $\Unitary(H)$.
\item For all $\chi_1,\chi_2\in X$ we have
\[
\Bounded_G\left(\Ind_P^G(\sigma\otimes\chi_1),\Ind_P^G(\sigma\otimes\chi_2)\right) = \lspan\{ I_{ w, \chi_1}\ |\  w\in W,\ w\chi_1 = \chi_2\},
\]
where the left-hand side is the space of $G$-equivariant bounded linear maps on $H$ that intertwine the indicated representations of $G$.
\end{enumerate}
\end{theorem}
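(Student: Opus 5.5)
The proof will reduce each of the four assertions to the standard properties of Arthur's normalised intertwining operators $R_{s^{-1}Ps|P}(\sigma\otimes\chi)$ from \cite[Theorem 2.1]{Arthur-Intertwining-I}. I will also need the elementary behaviour of the operators $\rho_{\tilde s}$ and $\mu_{T_w}$ under the induction functor, together with Harish-Chandra's commuting algebra theorem as packaged in \cite{Arthur-Elliptic}. The strategy is to treat the three factors in Definition \ref{def:pI}(5) separately and then compose.

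For (1), I will check that each arrow in the defining diagram is $G$-equivariant between the indicated induced representations. For $R_{s^{-1}Ps|P}(\sigma\otimes\chi)$ this is part of Arthur's theorem. For $\rho_{\tilde s}$, right translation by $\tilde s$ carries functions that are $s^{-1}Ps$-equivariant for $\sigma\otimes\chi$ to functions that are $P$-equivariant for ${}^{\tilde s}(\sigma\otimes\chi)={}^{\tilde s}\sigma\otimes{}^s\chi$. It visibly commutes with left translation by $G$. Here I must check compatibility with the compact picture, which holds because $\tilde s\in K$. For $\mu_{T_w}$, the $M$-equivariance of $T_w$ gives the required intertwining property, so the composite intertwines $\Ind(\sigma\otimes\chi)$ with $\Ind(\sigma\otimes\psi{}^s\chi)=\Ind(\sigma\otimes w\chi)$.

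For (3), I will use the facts that $\rho_{\tilde s}$ and $\mu_{T_w}$ are independent of $\chi$ in the compact picture, and that $\chi\mapsto R_{s^{-1}Ps|P}(\sigma\otimes\chi)$ is continuous (indeed rational and regular on the unitary axis) by Arthur's theorem. Strong continuity then follows: it holds on each $K$-finite isotypic piece, which is finite dimensional, and the operators are uniformly bounded by unitarity.

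For (2), I will compute $I_{w_2,w_1\chi}I_{w_1,\chi}$ by commuting the factors past one another. First, $\rho_{\tilde s_2}$ and $\mu_{T}$ intertwine the operators $R$ for conjugate parabolics, in the sense that $\rho_{\tilde s}R_{Q'|Q}(\tau)\rho_{\tilde s}^{-1}=R_{\tilde s^{-1}Q'\tilde s|\tilde s^{-1}Q\tilde s}({}^{\tilde s}\tau)$, and $\mu_T$ commutes with $R$ when $T$ is $M$-equivariant. Second, I will invoke the cocycle relation $R_{Q''|Q'}R_{Q'|Q}=R_{Q''|Q}$ for normalised operators; this is the step I expect to be the main obstacle, since it depends on Arthur's choice of normalising factors and on the bookkeeping of parabolics. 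After these reductions the product becomes $R_{(s_2s_1)^{-1}P s_2s_1|P}$ followed by $\rho_{\tilde s_2\tilde s_1}$ and $\mu_{T_{w_2}\,{}^{\tilde s_2}T_{w_1}}$. Writing $\tilde s_2\tilde s_1=\widetilde{s_2s_1}\,m$ with $m\in K\cap M$, the right translation by $m$ acts through $\sigma(m)$ and is absorbed into the $\mu$ term. The defining relation for $\gamma$ then produces the scalar $\overline{\gamma}(w_2,w_1)$. That $\gamma$ is a $2$-cocycle follows from associativity of the $T$'s, since each $T$ is unique up to scalar by Schur's lemma.

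For (4), the inclusion $\supseteq$ is (1). For $\subseteq$, I will quote Harish-Chandra's commuting algebra theorem in the form of \cite[Section 2]{Arthur-Elliptic}. When $\chi_1=\chi_2$, that theorem says the commutant is spanned by the $R(s,\sigma\otimes\chi)$ for $s\in(W_M)_{\sigma\otimes\chi}$, and these coincide with the $I_{w,\chi}$ by \eqref{eq:I-R-equality} and Lemma \ref{lem:W-WM}. In general, $\Ind(\sigma\otimes\chi_1)$ and $\Ind(\sigma\otimes\chi_2)$ have a nonzero intertwiner only if $\sigma\otimes\chi_2\cong{}^s(\sigma\otimes\chi_1)$ for some $s$, by the disjointness of the associated components. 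In that case Lemma \ref{lem:W-WM} supplies a $w$ with $w\chi_1=\chi_2$. Composing with the unitary $I_{w,\chi_1}$ reduces the problem to the case $\chi_1=\chi_2$, and $I_{w,\chi_1}\cdot I_{v,\chi_1}$ lies in the span of the $I_{wv,\chi_1}$ by (2).
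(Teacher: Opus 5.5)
Your proposal is correct and follows the paper's proof essentially step for step: arrow-by-arrow equivariance for (1); the computation from the transport, naturality and product properties of Arthur's normalised operators for (2), which the paper simply delegates to \cite[Proposition 2.4]{AA}; continuity of the matrix coefficients of $R$, upgraded to strong continuity through the finite-dimensional $K$-isotypic pieces, for (3); and, for (4), the commuting algebra theorem at $\chi_1=\chi_2$, the disjointness result, and composition with a unitary $I_{v,\chi_1}$. One slip is cosmetic: $\rho_{\tilde s}$ carries $\Ind_Q^G(\tau)$ to $\Ind_{\tilde sQ\tilde s^{-1}}^G({}^{\tilde s}\tau)$, so the transport identity should read $\rho_{\tilde s}R_{Q'|Q}(\tau)\rho_{\tilde s}^{-1}=R_{\tilde sQ'\tilde s^{-1}|\tilde sQ\tilde s^{-1}}({}^{\tilde s}\tau)$; also, the product rule $R_{Q''|Q'}R_{Q'|Q}=R_{Q''|Q}$ is one of the properties listed in Arthur's theorem, so it is not an obstacle.
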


\begin{proof}
Part (1) holds because each of the three other arrows in the diagram defining $I_{w,\chi}$ is an intertwiner between the indicated representations. 

As explained in \cite[Proposition 2.4]{AA}, the equality (2) follows from a straightforward computation using properties of the operators $R_{s^{-1}Ps|P}(\sigma\otimes\chi)$ established in \cite[Theorem 2.1]{Arthur-Intertwining-I}. 

To prove part (3) we recall from \cite[Theorem 2.1]{Arthur-Intertwining-I} that the intertwining operators $R_{s^{-1}Ps|P}(\sigma\otimes\chi)$ are in fact defined for $\chi$ in the complex torus of not-necessarily-unitary unramified characters, and that each matrix coefficient of this family of operators is a meromorphic function of $\chi$, regular on the subset $X$ of unitary characters. In particular each of these matrix coefficients is continuous on $X$, and so the operators $I_{ w,\chi}$ are continuous in the weak-operator topology. To get from here to strong-operator continuity, note that $H$ can be written as a direct sum of finite-dimensional subspaces, each of which is invariant under all of the operators $I_{ w, \chi}$: namely, the isotypical components for the compact open subgroup $K\subset G$. The weak-operator topology coincides with the norm topology on each of these finite-dimensional components, and then an easy approximation argument shows that the map $\chi\mapsto I_{w,\chi}$ is strong-operator continuous.

In part (4), the case where $\chi_1=\chi_2$ is essentially Harish-Chandra's `commuting algebra theorem' (cf.~\cite[Theorem 5.5.3.2]{Silberger-book}), together with the identification \eqref{eq:I-R-equality}. In the general case, \cite[Proposition III.4.1]{Waldspurger-Plancherel} implies that there is a nonzero intertwiner $\Ind_P^G(\sigma\otimes\chi_1)\to \Ind_P^G(\sigma\otimes\chi_2)$ if and only if there is an $s\in W_M$ such that ${}^s(\sigma\otimes \chi_1)\cong \sigma\otimes \chi_2$. The element  $v\coloneqq {}^s\overline{\chi_1}\chi_2 s\in W$ then satisfies $v\chi_1=\chi_2$ (cf.~Lemma \ref{lem:W-WM}), and we have
\begin{align*}
\Bounded_G\left(\Ind_P^G(\sigma\otimes\chi_1),\Ind_P^G(\sigma\otimes\chi_2)\right) &= I_{v,\chi_1}\Bounded_G\left(\Ind_P^G(\sigma\otimes\chi_1),\Ind_P^G(\sigma\otimes\chi_1)\right) \\
& = \lspan\{I_{v,\chi_1}I_{ w,\chi_1}\ |  w\in W_{\chi_1}\} \\
& = \lspan\{I_{w,\chi_1}\ |\ w\in W,\ w\chi_1 = \chi_2\}.\qedhere
\end{align*}
\end{proof}

Note that the isomorphisms $T_{w}$ in Definition \ref{def:pI} can  be chosen so that 
\begin{equation}\label{eq:gamma-inverse}
\gamma\big( w, w^{-1}\big)=1\quad\text{ for all }\quad w\in W.
\end{equation}
Indeed, if we first choose the $T_w$s arbitrarily, then for each pair $w,w^{-1}$ we choose a square root $c_{w,w^{-1}}$ of $\gamma(w,w^{-1})=\gamma(w^{-1},w)\in \T$, then the renormalised operators $T_{w}^{\operatorname{new}}\coloneqq c_{w,w^{-1}}T_{ w}^{\operatorname{old}}$ satisfy \eqref{eq:gamma-inverse}. We will henceforth assume that \eqref{eq:gamma-inverse} holds.

\begin{remark}\label{rem:gamma}
It is known that in many cases the cocycle $\gamma$ is a coboundary, which implies that by rescaling the operators $T_w$ we can arrange that $\gamma \equiv 1$. As a simple example, \eqref{eq:gamma-inverse} implies that we can always arrange $\gamma\equiv 1$ if $W$ has order $2$. Less trivially, if $G$ is a quasi-split classical group then results of \cite{Herb}, \cite {Goldberg-Sp-SO}, and \cite{Goldberg-U} imply that $\gamma$ can always be trivialised in this way. (This has previously been remarked in \cite[Remarque 1.6]{Waldspurger-formule-2e} and \cite[Remark 2.1]{AA}.)
\end{remark}

\subsection{A twisted action from intertwining operators}

Recall that the finite group ${W}$ acts on the torus $X$ by homeomorphisms. We let $\alpha$ denote the induced action of $W$ on $C(X)$ by $*$-automorphisms. Letting $H$ continue to denote the Hilbert space \eqref{eq:HInd-definition}, we consider the right Hilbert $C(X)$-module  $E=C(X,H)$, with inner product $\langle \xi\ |\ \eta\rangle(\chi) \coloneqq \langle \xi(\chi)\ |\ \eta(\chi)\rangle$.

\begin{definition}\label{def:pU}
For each $w\in {W}$ and each $\xi\in E=C(X,H)$ we let $U_{ w}\xi:X\to H$ be the function
\[
(U_{w}\xi)(\chi) \coloneqq I_{ w, w^{-1}\chi} \xi(w^{-1}\chi).
\]
\end{definition}

\begin{lemma}
The operators $U_{ w}$ define an $(\alpha,\gamma)$-twisted action of ${W}$ on $E$.
\end{lemma}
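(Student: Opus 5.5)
We first check that each $U_w$ maps $E=C(X,H)$ into itself, and then verify the three conditions of Definition \ref{def:twisted-action-E} one at a time. Throughout, $\gamma$ is the $\T$-valued cocycle of Theorem \ref{thm:pI-properties}(2), viewed as constant functions in $C(X,\T)$. Since it is constant, $\alpha_u\gamma(v,w)=\gamma(v,w)$. The cocycle identity \eqref{eq:twisted-2-cocycle} and the normalisation $\gamma(1,w)=\gamma(w,1)=1$ hold because $\gamma$ comes from the associativity of products of the unitaries $T_w$, and because we may take $T_1=\id$ and $\tilde 1=1$. So $(\alpha,\gamma)$ is a twisted action of $W$ on $C(X)$, with $\alpha$ an honest action.

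\textbf{Continuity.} Fix $w\in W$ and $\xi\in E$. We must show that $\chi\mapsto I_{w,w^{-1}\chi}\xi(w^{-1}\chi)$ is norm-continuous. Since $\chi\mapsto w^{-1}\chi$ is a homeomorphism, it is enough to show that $\chi\mapsto I_{w,\chi}\xi(\chi)$ is continuous. Write
\[
I_{w,\chi}\xi(\chi)-I_{w,\chi_0}\xi(\chi_0)=I_{w,\chi}\bigl(\xi(\chi)-\xi(\chi_0)\bigr)+\bigl(I_{w,\chi}-I_{w,\chi_0}\bigr)\xi(\chi_0).
\]
The first term is bounded by $\|\xi(\chi)-\xi(\chi_0)\|$, because $I_{w,\chi}$ is unitary. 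The second term tends to $0$ by the strong-operator continuity in Theorem \ref{thm:pI-properties}(3). This step is the only genuinely analytic point, and it is handled by the standard argument that a uniformly bounded, strongly continuous family acts continuously on continuous functions. Each $U_w$ is visibly linear. It is invertible, with inverse $U_{w^{-1}}$ up to a scalar, as will follow from the third condition below. Hence $U_w\in\GL(E)$.

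\textbf{Module and inner-product conditions.} For $b\in C(X)$ we have
\[
(U_w(\xi b))(\chi)=I_{w,w^{-1}\chi}\xi(w^{-1}\chi)\,b(w^{-1}\chi)=\bigl(U_w\xi\,\alpha_w b\bigr)(\chi).
\]
This also holds for $b\in \UnitaryMultiplier(C(X))=C(X,\T)$, since $X$ is compact. Because $I_{w,w^{-1}\chi}$ is unitary,
\[
\langle U_w\xi\,|\,U_w\eta\rangle(\chi)=\langle \xi(w^{-1}\chi)\,|\,\eta(w^{-1}\chi)\rangle=\alpha_w\langle\xi\,|\,\eta\rangle(\chi).
\]

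\textbf{Cocycle condition.} For $v,w\in W$,
\[
(U_vU_w\xi)(\chi)=I_{v,v^{-1}\chi}\,I_{w,w^{-1}v^{-1}\chi}\,\xi\bigl((vw)^{-1}\chi\bigr).
\]
Apply Theorem \ref{thm:pI-properties}(2) with base point $\chi'=(vw)^{-1}\chi$, taking $w_1=w$ and $w_2=v$; note that $w\chi'=v^{-1}\chi$. This gives
\[
I_{v,w\chi'}I_{w,\chi'}=\overline{\gamma}(v,w)\,I_{vw,\chi'},
\]
and hence $U_vU_w\xi=U_{vw}(\xi)\,\gamma(v,w)^*$, which is the required relation.

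The main point of care is bookkeeping: the conventions in Theorem \ref{thm:pI-properties}(2) must match Definition \ref{def:twisted-action-E}, including the order of the arguments of $\gamma$ and the complex conjugation. Beyond that, the only step needing an actual argument is the continuity step above.
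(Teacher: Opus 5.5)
Your proposal is correct and follows the paper's proof essentially step for step. Continuity comes from unitarity together with strong-operator continuity of $\chi\mapsto I_{w,\chi}$, the module and inner-product relations come directly from the definitions, and the cocycle relation comes from Theorem \ref{thm:pI-properties}(2) applied at the base point $(vw)^{-1}\chi$. Your remark on invertibility adds a detail the paper omits. To close it you also need $U_1=\id$, which holds because $U_1$ is an isometry satisfying $U_1U_1=U_1\gamma(1,1)^*=U_1$.
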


\begin{proof}
The fact that the operators $I_{w,\chi}$ are unitary, and vary strong-operator-continuously with $\chi$, ensures that for each $\xi\in C(X,H)$ and each $w\in W$ the function $U_{w}\chi$ is continuous on $X$. The relations $U_{ w}(\xi b) = U_{w}(\xi)\alpha_{w}(b)$ and $\langle U_{ w}\xi\ |\ U_{ w}\eta\rangle = \alpha_{ w}\langle \xi\ |\ \eta\rangle$
for all $w\in W$, $\xi\in E$, and $b\in C(X)$ follow immediately from the definitions. As for the third condition in Definition \ref{def:twisted-action-E}, let $w_1,w_2\in W$; then for $\chi \in X$ we have
\[
\begin{aligned}
U_{ w_1} U_{ w_2}(\xi)(\chi)  &= I_{w_1,w_1^{-1}\chi}I_{w_2,w_2^{-1}w_1^{-1}\chi}\xi\left(w_2^{-1}w_1^{-1}\chi\right) \\
& = \overline{\gamma}(w_1, w_2)I_{w_1w_2,w_2^{-1}w_1^{-1}\chi} \xi\left(w_2^{-1}w_1^{-1}\chi\right) \\
& = \overline{\gamma}(w_1, w_2)U_{w_1 w_2}(\xi)(\chi)
\end{aligned}
\]
where the second property in Theorem \ref{thm:pI-properties} is used in the second line.
\end{proof}

\subsection{\texorpdfstring{The Plancherel theorem for $C^*_r(G)$}{The Plancherel theorem for C*r(G)}}

In this section we recall Plymen's $C^*$-algebra version of the Plancherel theorem for $p$-adic reductive groups \cite[2.5 Theorem]{Plymen-Reduced}. Our approach differs slightly from that of \cite{Plymen-Reduced} in that we use the torus $X$, rather than the orbit $X\sigma\subset E_2(M)$, as the parameter space for the induced representations $\Ind_P^G(\sigma\otimes\chi)$. Most of the arguments are, nevertheless, essentially the same as those of \cite{Plymen-Reduced}, and we make no claim to novelty here.

Each of the unitary representations $\Ind_P^G(\sigma\otimes\chi):G\to \Unitary(H)$ is tempered (see \cite[Lemme III.2.3]{Waldspurger-Plancherel}), and thus induces a $C^*$-algebra representation $\Ind_P^G(\sigma\otimes\chi):C^*_r(G)\to \Bounded(H)$.

Consider the Hecke algebra $C_c^\infty(G)$ of compactly supported, locally constant functions on $G$. Every $f\in C_c^\infty(G)$ is invariant under translation by some compact open subgroup $J$ of $G$, which implies that for each $\chi\in X$ the operator $\Ind_P^G(\sigma\otimes\chi)(f)$ factors through the orthogonal projection of $H$ onto its subspace of $J$-fixed vectors. The latter subspace is finite-dimensional, because the representation $\Ind_P^G(\sigma\otimes\chi)$ is admissible (\cite{Bernstein-tame} and \cite[Lemme III.2.3]{Renard}), and so $C_c^\infty(G)$ acts by finite-rank operators in $\Ind_P^G(\sigma\otimes\chi)$. Since $C_c^\infty(G)$ is dense in $C^*_r(G)$, we thus have $\Ind_P^G(\sigma\otimes\chi):C^*_r(G)\to \Compact(H)$.

Now for each $g\in G$ and each compact open subgroup $J\subseteq K$ consider the family of operators 
\begin{equation}\label{eq:Ind-delta-JgJ}
\Ind_P^G(\sigma\otimes\chi)(\delta_{JgJ})\in \Compact(H^J)\subseteq \Compact(H),
\end{equation}
where $\delta_{JgJ}\in C_c^\infty(G)$ is the characteristic function of the double coset $JgJ$, and $H^J$ is the space of $J$-fixed vectors in $H$ (which is independent of $\chi$). It is a straightforward matter to verify that for each $g'\in G$ the function $\chi\mapsto \Ind_P^G(\sigma\otimes\chi)(g')$ is continuous in the strong-operator topology on $\Unitary(H)$, and hence that the operators \eqref{eq:Ind-delta-JgJ} vary strong-operator-continuously with $\chi$. Since $H^J$ is finite dimensional these operators in fact vary norm-continuously. Functions of the form $\delta_{JgJ}$ span $C_c^\infty(G)$, and so a density argument implies that for each $f\in C^*_r(G)$ the function $\chi\mapsto \Ind_P^G(\sigma\otimes\chi)(f)$ is continuous in the norm topology. 

We thus obtain a $*$-homomorphism
\begin{equation}\label{eq:pCstar-map}
C^*_r(G) \to C(X,\Compact(H)),\qquad f\mapsto \big(\chi\mapsto \Ind_P^G(\sigma\otimes\chi)(f)\big).
\end{equation}

\begin{definition}\label{def:Cstar-block}
We let $C^*_r(G)_{(M,\sigma)}$  denote the image of the homomorphism \eqref{eq:pCstar-map}.
\end{definition}

As in the general situation considered in Section \ref{sec:general}, the operators $U_{w}$ induce an action $k\mapsto U_{ w}k U_{ w}^{-1}$ of ${W}$ on the $C^*$-algebra $C(X,\Compact(H))=\Compact_{C(X)}(E)$ by $*$-automorphisms. In terms of the operators $I_{w,\chi}$, we have for each $w\in W$, $k\in C(X,\Compact(H))$, and $\chi\in X$,
\[
\big(U_{w}k U_{ w}^{-1}\big)(\chi) = I_{ w, w^{-1}\chi} k(w^{-1}\chi) I_{ w, w^{-1}\chi}^*.
\]
The fixed-point algebra $C(X,\Compact(H))^W$ is thus
\begin{equation}\label{eq:k-fixed-I}
C(X,\Compact(H))^W = \{k\in C(X,\Compact(H))\ |\ I_{ w,\chi}k(\chi) = k(w\chi)I_{ w,\chi}\text{ for all }\chi\in X\}.
\end{equation}

\begin{theorem}[Plymen] \label{thm:Plancherel}
\begin{enumerate}[\rm(1)]
\item For each Levi subgroup $M$ of $G$, and each discrete-series representation $\sigma$ of $M$, we have $C^*_r(G)_{(M,\sigma)}=C(X,\Compact(H))^{W}$.
\item The homomorphisms \eqref{eq:pCstar-map} assemble to give an isomorphism of $C^*$-algebras $C^*_r(G) \cong \bigoplus_{[M,\sigma]} C^*_r(G)_{(M,\sigma)}$, where the direct sum is over the set of equivalence classes of discrete-series representations of Levi subgroups of $G$, with $(M_1,\sigma_1)\sim (M_2,\sigma_2)$ if and only if $M_2=gM_1 g^{-1}$ and $\sigma_2 \cong {}^g\sigma_1\otimes \chi$ for some $g\in G$ and $\chi\in X_{M_2}$.
\end{enumerate}
\end{theorem}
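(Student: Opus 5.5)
The plan is to prove part (1) by a Stone–Weierstrass/Dixmier-type argument for subalgebras of a continuous-trace-like algebra, and part (2) by combining (1) with the Plancherel theorem of Harish-Chandra and Waldspurger.

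For (1), first note that the image $C^*_r(G)_{(M,\sigma)}$ of \eqref{eq:pCstar-map} lies in $C(X,\Compact(H))^W$. By Theorem \ref{thm:pI-properties}(1), $I_{w,\chi}\Ind_P^G(\sigma\otimes\chi)(f)=\Ind_P^G(\sigma\otimes w\chi)(f)I_{w,\chi}$ for all $f\in C_c^\infty(G)$, and by density for all $f\in C^*_r(G)$. This is exactly the criterion \eqref{eq:k-fixed-I}.

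For the reverse inclusion, I would compare irreducible representations. By Lemma \ref{lem:E-BW-module} and Proposition \ref{prop:CXEW-support}, the spectrum of $A\coloneqq C(X,\Compact(H))^W$ is computable. Via the Morita equivalence with $C(X,E,W)$, each irreducible representation of $A$ is an irreducible subrepresentation of some evaluation $k\mapsto k(\chi)$ on $H$, namely an isotypic piece for the commutant of $\{I_{w,\chi}:w\in W_\chi\}$. Two such pieces are equivalent exactly when related by some $I_{w,\chi}$. Since the representations of $A$ are cut out by $W$-orbits and the linear span of the $I_{w,\chi}$, Theorem \ref{thm:pI-properties}(4) says this commutant is precisely $\Bounded_G(\Ind_P^G(\sigma\otimes\chi))$. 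So the irreducible subrepresentations of $A$ at $\chi$ correspond bijectively to the irreducible constituents of $\Ind_P^G(\sigma\otimes\chi)$.

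Now let $B=C^*_r(G)_{(M,\sigma)}\subseteq A$. The irreducible constituents $\pi$ of the $\Ind_P^G(\sigma\otimes\chi)$ restrict to irreducible representations of $B$. Inequivalent ones stay inequivalent on $B$, since by Theorem \ref{thm:pI-properties}(4) any intertwiner on $B$, which is the image of $C^*_r(G)$, is $G$-equivariant and hence comes from the span of the $I_{w,\chi}$. Moreover every irreducible representation of $A$ restricts irreducibly to $B$. Hence $B$ separates the pure states of $A$, up to the unitary equivalences already present in $A$, and also the "zero" state, because $A\subseteq C(X,\Compact(H))$ and Glimm's condition is satisfied.

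The conclusion $B=A$ then follows from the Stone–Weierstrass theorem for postliminal (here type I) $C^*$-algebras: $A$ is a subalgebra of $C(X,\Compact(H))$, hence type I, and a type I algebra containing a subalgebra that separates its irreducible representations in this sense coincides with it. This is Dixmier, \emph{$C^*$-algebras}, 11.1.6, the argument Plymen uses. \textbf{The main obstacle} is making this step precise: one must verify irreducibility of restrictions and non-equivalence, which rests entirely on the commuting algebra theorem in Theorem \ref{thm:pI-properties}(4).

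For (2), apply Harish-Chandra's Plancherel theorem for $p$-adic groups in Waldspurger's form \cite{Waldspurger-Plancherel}. The left regular representation is weakly contained in, and in fact weakly equivalent to, the family of all $\Ind_P^G(\sigma\otimes\chi)$, so the direct sum of the maps \eqref{eq:pCstar-map} over classes $[M,\sigma]$ is injective on $C^*_r(G)$. Equivalent pairs give conjugate families, which justifies indexing by classes. Inequivalent classes give disjoint sets of irreducible tempered representations, by the disjointness statement in Waldspurger (Prop.~III.4.1) together with the uniqueness of the discrete-series support.

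Surjectivity onto the direct sum then follows from two facts. First, each component map is surjective by definition of $C^*_r(G)_{(M,\sigma)}$. Second, the spectra of the summands are disjoint, and each is open and closed in $\widehat{C^*_r(G)}$; I would derive this from the Plancherel measure decomposition or from Bernstein's decomposition into finitely many idempotents per level. The image is therefore a $C^*$-subalgebra of $\bigoplus A_i$ that surjects onto each $A_i$ and whose irreducible representations from different summands are inequivalent. The same Stone–Weierstrass argument, or the fact that central projections in the multiplier algebra separate the summands, gives equality with the (c$_0$-)direct sum.
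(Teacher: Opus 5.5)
Your proposal is correct and is essentially the paper's argument. Part (1) is Kaplansky's Stone--Weierstrass theorem (Dixmier 11.1.6), with its hypothesis supplied by the commuting algebra theorem, Theorem~\ref{thm:pI-properties}(4); part (2) is injectivity and disjointness from Waldspurger's Proposition III.4.1, followed by a second application of Kaplansky's theorem. You get injectivity from weak containment instead, which is equally fine.

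The one step you leave vague is the openness of each summand's spectrum, i.e.\ that the combined map lands in the $c_0$-direct sum rather than the product. This is exactly where the paper invokes Bernstein's uniform admissibility theorem. You should state that finiteness input explicitly, rather than leaving open whether it comes from the Plancherel measure or from Bernstein.
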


\noindent Note that the direct sum appearing in part (2) is the direct sum of $C^*$-algebras, i.e., the $C_0$ direct sum.

\begin{proof}
The fact that the operators $I_{ w, \chi}$ are intertwiners (part (1) of Theorem \ref{thm:pI-properties}) ensures that $C^*_r(G)_{(M,\sigma)}\subseteq C(X,\Compact(H))^{W}$. 

The reverse inclusion follows from part (4) of Theorem \ref{thm:pI-properties}, together with Kaplansky's Stone-Weierstrass theorem, which says that if $B$ is a $C^*$-subalgebra of a CCR $C^*$-algebra $A$, such that every $B$-equivariant bounded operator between irreducible $A$-representations is in fact $A$-equivariant, then $B=A$. (See \cite[11.1.6]{Dixmier}.) To verify the hypothesis of Kaplansky's theorem for $B=C^*_r(G)_{(M,\sigma)}$ and $A=C(X,\Compact(H))^W$, note that each irreducible representation of $C(X,\Compact(H))^W$ factors through one of the (surjective) evaluation maps $k\mapsto k(\chi)\in \Compact(H)^{W_\chi}$ (where $W_\chi$ acts on $\Compact(H)$ by conjugation by the unitary operators $I_{w,\chi}$); that the composition of the homomorphism \eqref{eq:pCstar-map} with evaluation at $\chi$ is precisely the representation $\Ind_P^G(\sigma\otimes\chi):C^*_r(G)\to \Compact(H)^{W_\chi}$; and that every irreducible representation of $\Compact(H)^{W_\chi}$ can be realised on some subspace of $H$. Harish-Chandra's commuting algebra theorem (part (4) of Theorem \ref{thm:pI-properties}) ensures that every $G$-intertwiner between the representations $\Ind_P^G(\sigma\otimes\chi)$ is a linear combination of the intertwining operators $I_{w,\chi}$, which are by definition also intertwiners for $C(X,\Compact(H))^W$. Thus the hypothesis of Kaplansky's theorem is satisfied, and we conclude that $C^*_r(G)_{(M,\sigma)} = C(X,\Compact(H))^W$.

To prove part (2), one first notes that Bernstein's uniform admissibility theorem \cite{Bernstein-tame} implies that the homomorphism $\Phi:C^*_r(G)\to \prod_{[M,\sigma]} C^*_r(G)_{(M,\sigma)}$ given by combining the homomorphisms \eqref{eq:pCstar-map} actually maps into the direct sum (cf.~\cite[VI.10.5]{Renard}, \cite[Th\`eor\`eme VIII.1.2]{Waldspurger-Plancherel}.) Every irreducible representation of $C^*_r(G)$ is isomorphic to a subrepresentation of some $\Ind_P^G(\sigma\otimes\chi)$ (\cite[Proposition III.4.1(i)]{Waldspurger-Plancherel}), so $\Phi$ is injective. On the other hand, if $(M_1,\sigma_1)\not\sim(M_2,\sigma_2)$ then for all $\chi_1\in X_{M_1}$ and $\chi_2\in X_{M_2}$ we have 
\[
\Bounded_G\left(\Ind_{P_1}^G(\sigma_1\otimes\chi_1),\Ind_{P_2}^G(\sigma_2\otimes\chi_2)\right)=0
\]
(\cite[Proposition III.4.1(ii)]{Waldspurger-Plancherel}), and so another application of Kaplansky's Stone-Weierstrass theorem ensures that $\Phi$ is surjective.
\end{proof}

\subsection{Scalar intertwiners}

We first recall some notation from Section \ref{sec:general}. For each $\chi\in X$ we consider the normal subgroup
\[
W_\chi'\coloneqq \{w\in W_\chi\ |\ I_{ w,\chi}\text{ is a scalar}\}
\]
of the stabiliser $W_\chi$, and for each $ w\in W'_\chi$ we let $\i_{ w,\chi}\in \T$ be the scalar satisfying $I_{ w,\chi}=\i_{ w,\chi}\id_H$. The map $\i_\chi:w\mapsto \i_{w,\chi}$ is a one-dimensional $\overline{\gamma}$-representation of $W'_\chi$. The following argument, due to Knapp and Stein, shows that the variability of the groups $W'_\chi$ and the characters $\i_{w,\chi}$ is rather tightly constrained:

\begin{lemma}[cf.~{\cite[Lemma 14.1]{Knapp-Stein-II}}]\label{lem:homotopy}
Let $t\mapsto \chi_t$ ($t\in [0,1]$) be a continuous path in $X$, and suppose that $w\in W$ satisfies $w\in W_{\chi_t}$ for all $t$, and $w\in W'_{\chi_0}$. Then for all $t\in [0,1]$ we have $w\in W'_{\chi_t}$ and $\i_{w,\chi_t}=\i_{w,\chi_0}$.
\end{lemma}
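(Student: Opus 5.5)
We will follow the Knapp--Stein argument: since $w$ fixes every $\chi_t$, each $I_{w,\chi_t}$ is a unitary self-intertwiner of $\Ind_P^G(\sigma\otimes\chi_t)$, varying continuously in $t$. The strategy is to show that the scalar property, and the value of the scalar, are preserved under deformation, using the finite-order structure provided by the cocycle.

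First, we show $I_{w,\chi_t}$ has finite order up to a fixed scalar. Let $n$ be the order of $w$ in $W$. Since $w\chi_t=\chi_t$, iterating part (2) of Theorem \ref{thm:pI-properties} gives
\[
I_{w,\chi_t}^n = c\, I_{w^n,\chi_t} = c\, I_{1,\chi_t},
\]
where $c\in\T$ is a product of values of $\overline{\gamma}$ on powers of $w$. Because $\gamma$ is constant on $X$, $c$ is independent of $t$. The operator $I_{1,\chi}$ is the identity: with $T_1=\id$ and $\tilde 1=1$, the normalised operator $R_{P|P}$ is the identity. Hence every eigenvalue of $I_{w,\chi_t}$ lies in the fixed finite set $\Lambda=\{\lambda\in\T: \lambda^n=c\}$.

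Next, we decompose $H=\bigoplus_\tau H^\tau$ into $K$-isotypic components, as in the proof of Theorem \ref{thm:pI-properties}(3). Each $H^\tau$ is finite-dimensional and invariant under all $I_{w,\chi}$, and on it $t\mapsto I_{w,\chi_t}|_{H^\tau}$ is norm-continuous. For each $\lambda\in\Lambda$, the spectral projection $p_{\lambda,t}$ onto the $\lambda$-eigenspace of the unitary $I_{w,\chi_t}|_{H^\tau}$ can be written as a polynomial in that operator, namely the Lagrange interpolation polynomial for $\Lambda$. So $p_{\lambda,t}$ is continuous in $t$, and its rank, being the trace of a continuous family of projections, is locally constant and hence constant on $[0,1]$. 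At $t=0$ the operator is $\i_{w,\chi_0}\id$, so $p_{\lambda,0}=0$ for every $\lambda\neq \i_{w,\chi_0}$. Therefore $p_{\lambda,t}=0$ for all $t$, and $I_{w,\chi_t}|_{H^\tau}=\i_{w,\chi_0}\id$. Summing over $\tau$ gives $I_{w,\chi_t}=\i_{w,\chi_0}\id_H$, which shows both that $w\in W'_{\chi_t}$ and that $\i_{w,\chi_t}=\i_{w,\chi_0}$.

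The main obstacle is the first step, obtaining the fixed finite spectrum $\Lambda$. It relies on the cocycle in Theorem \ref{thm:pI-properties}(2) being a genuine constant scalar, independent of $\chi$, and on the normalisation $I_{1,\chi}=\id$. If the latter fails for the chosen lifts, $I_{1,\chi}$ is still a constant scalar by the same cocycle identity, and the argument goes through with $c$ adjusted. Without $\chi$-independence the eigenvalues could drift continuously and the rank argument would break down, so this is the point to verify carefully from the definitions of $T_w$ and $\gamma$.
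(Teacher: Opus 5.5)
Your proof is correct and follows essentially the same route as the paper. The cocycle identity of Theorem~\ref{thm:pI-properties}(2) confines the spectrum of $I_{w,\chi_t}$ to the fixed finite set of $n$th roots of $\prod_{j=1}^{n-1}\overline{\gamma}(w,w^j)$, and norm-continuity on finite-dimensional $K$-invariant subspaces then keeps the spectrum equal to $\{\i_{w,\chi_0}\}$. The differences are only cosmetic: the paper works with the subspaces $H^{K_j}$ of $K_j$-fixed vectors rather than $K$-isotypic components, and it simply asserts that the spectra vary continuously, a step your Lagrange-interpolation spectral projections and rank argument make explicit.
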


\begin{proof}
Fix $w$ as in the lemma, and consider the family of operators $I_t\coloneqq I_{ w ,\chi_t}$ (cf.~Definition \ref{def:pI}). Recall that these operators have the following properties (cf.~Theorem \ref{thm:pI-properties}):
\begin{enumerate}[\rm(i)]
\item Each $I_t\in \Unitary(H)$ is an intertwiner of the parabolically induced representation $\Ind_P^G(\sigma\otimes \chi_t)$ of $G$.
\item The map $t\mapsto I_t$ is continuous for the strong-operator topology on $\Unitary(H)$.
\item If $w$ has order $n$ in the finite group ${W}$, then for each $t$ we have $I_t^n = \prod_{j=1}^{n-1}\overline{\gamma}( w , w ^j)\in \T$.
\end{enumerate}
(Property (iii) follows from the identity $I_{ w ,\chi_t}I_{ w ^j,\chi_t} = \overline{\gamma}( w , w ^j) I_{ w ^{j+1},\chi_t}$ of Theorem \ref{thm:pI-properties}(2).)

Let $K_0 \supset K_1 \supset K_2 \supset \cdots$ be a sequence of compact open subgroups of our chosen maximal compact subgroup $K\subset G$, with $\bigcap_j K_j=\{1\}$. The parabolically induced representations $\Ind_P^G(\sigma\otimes\chi):G\to \Unitary(H)$, for $\chi\in X$, all coincide on the subgroup $K$, and so the subspaces $H^{K_j}$ of $K_j$-fixed vectors do not depend on $\chi$. Moreover, since the induced representations of $G$ are admissible, each of the subspaces $H^{K_j}$ is finite-dimensional; and since $\bigcap_j K_j= \{1\}$ we have $\overline{\bigcup_j H^{K_j}}=H$.

Property (i) of the operators $I_t$ ensures that each $I_t$ restricts to an operator on each of the subspaces $H^{K_j}$. Since these subspaces are finite-dimensional, property (ii) implies that the map $t\mapsto I_t\restrict_{H^{K_j}}$ is norm-continuous, and hence that the spectra of the operators $I_t\restrict_{H^{K_j}}$ vary continuously with $t$. Property (iii) implies that these spectra are all contained in a fixed finite subset of $\T$---the set of $n$th roots of $\prod_{j=1}^{n-1}\overline{\gamma}( w , w ^i)$---and so the spectra must in fact be constant in $t$. Since the union of the subspaces $H^{K_j}$ is dense in $H$, we have for each $t\in [0,1]$
\[
\begin{aligned}
 \big( w \in {W}'_{\chi_t}\ \ \text{ and }\ \ \i_{w,\chi_t}=z\big)  &\iff \spectrum(I_t)=\{z\}  \\
 &\iff \spectrum(I_t\restrict_{H^{K_j}})=\{z\}\text{ for all $j$}.
\end{aligned}
\]
Since $\spectrum(I_t\restrict_{H^{K_j}})$ is independent of $t$, and since we are assuming that $w\in W'_{\chi_0}$, we conclude that $w\in W'_{\chi_t}$ for all $t$, and that the scalar $\i_{w,\chi_t}$ is constant in $t$.
\end{proof}

\begin{remark}\label{rem:i=1}
For a fixed choice of $\chi_0\in X$ we can always choose the operators $T_{w}$ in Definition \ref{def:pI} so that $\i_{w, \chi_0}=1$ for all $w\in {W}'_{\chi_0}$. (This was noted in \cite[Remark (1) after Proposition 2.1]{Arthur-Elliptic}.) It would simplify the computations in Section \ref{sec:KCXEW}  if we could normalise the intertwining operators so that $\i_{w,\chi}=1$ for all $w\in {W}'_\chi$, for all $\chi$ at once, while keeping the cocycle $\gamma$ constant in $\chi$. We do not know whether this is possible in general, though it certainly is possible in some cases: e.g., when $G$ is a split Chevalley group and $M$ is a minimal Levi subgroup: see the proof of Theorem 1 in Section 3 of \cite{Keys}.

Lemma \ref{lem:homotopy} shows that for each $w\in W$ the function $\i_{w,\chi}$ is automatically constant on each connected component of the set ${X'}^{w}=\{\chi\in X\ |\ w\in W'_\chi\}$; but unlike in the case of real groups, for $p$-adic groups the sets ${X'}^w$ need not be connected, so we cannot deduce from this that the function $\i_{w,\chi}$ is always constant in $\chi$.
\end{remark}

\begin{theorem}[Harish-Chandra, Silberger]\label{thm:E-completeness}
The operators $U_{w}$ satisfy the completeness condition  of Definition \ref{def:completeness-condition}: that is, for each $\chi\in X$, every irreducible $\gamma$-representation of ${W}_\chi$ that lies over $\overline{\i_{\chi}}$ occurs in the contragredient of the $\overline{\gamma}$-representation $w\mapsto I_{ w,\chi}$. 
\end{theorem}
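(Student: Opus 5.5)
The proof will go through Harish-Chandra's commuting algebra theorem together with Silberger's structure theory of $W_\chi = W'_\chi \rtimes R_\chi$. Fix $\chi\in X$ and write $\mathcal{A}=\Bounded_G(\Ind_P^G(\sigma\otimes\chi))$. By Theorem \ref{thm:pI-properties}(4), $\mathcal{A}$ is spanned by the operators $I_{w,\chi}$ for $w\in W_\chi$. Hence the $*$-homomorphism
\[
C^*_{\overline{\gamma}}(W_\chi)\to \Bounded(H),\qquad w\mapsto I_{w,\chi},
\]
has image exactly $\mathcal{A}$. This homomorphism factors through the central projection $P$ onto the ``lies over $\i_\chi$'' block of $C^*_{\overline\gamma}(W_\chi)$, because $W'_\chi$ acts by the character $\i_\chi$. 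The completeness condition, after passing to contragredients, says that every irreducible $\overline\gamma$-representation of $W_\chi$ lying over $\i_\chi$ occurs in $I_\chi$. Equivalently, the map $P\,C^*_{\overline\gamma}(W_\chi)\to\mathcal{A}$ is injective. Since both algebras are finite dimensional, it suffices to prove the dimension equality
\[
\dim \mathcal{A} \;=\; \dim P\,C^*_{\overline\gamma}(W_\chi) \;=\; [W_\chi:W'_\chi].
\]
The second equality is a routine computation: the block lying over a character of the normal subgroup $W'_\chi$ is spanned by the images of coset representatives, so it has dimension equal to the index.

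The first equality is the Knapp--Stein/Silberger dimension theorem for $p$-adic groups. Silberger shows that $W'_\chi$ is the Weyl group of the root system of reduced roots whose Plancherel $\mu$-function vanishes at $\chi$. Choosing a positive chamber gives the $R$-group $R_\chi$ with $W_\chi=W'_\chi\rtimes R_\chi$. The operators $\{I_{r,\chi}\}_{r\in R_\chi}$ are linearly independent and form a basis of $\mathcal{A}$, so $\dim\mathcal{A}=|R_\chi|=[W_\chi:W'_\chi]$. I would cite Silberger's results (\cite{Silberger-dimension} and \cite[\S5.5]{Silberger-book}) for this. Along the way I would check that the identification \eqref{eq:I-R-equality} of our $I_{w,\chi}$ with Arthur's $R(s,\sigma\otimes\chi)$ matches his normalization. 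Any discrepancy is only a scalar, and rescaling by scalars does not affect the span.

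The main obstacle is making the citation precise. Silberger's statement is that the $W'_\chi$-operators are scalar and that $R_\chi$ indexes a basis of $\mathcal{A}$; I need to confirm that his $W'$ coincides with our $W'_\chi$, defined here by scalar intertwiners. One inclusion is clear, since his $W'$ consists of scalar intertwiners. For the converse, suppose $w\in W_\chi$ gives a scalar operator but lies outside his $W'$. Write $w=w'r$ with $r\in R_\chi$ nontrivial. Then $I_{r,\chi}$ is a scalar multiple of $I_{w'^{-1},\chi}I_{w,\chi}$, which is itself a scalar, contradicting linear independence of the $R_\chi$-basis. So the two groups agree, the dimensions match, and the argument is complete.
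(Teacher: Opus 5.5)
Your proposal is correct and follows essentially the same route as the paper. Both arguments factor the intertwiner representation through the central block $P_{\i,\chi}$ of the twisted group algebra and deduce injectivity on that block from Harish-Chandra's commuting algebra theorem together with Silberger's dimension theorem. You also make explicit two points the paper leaves implicit---the dimension count $\dim\mathcal{A}=[W_\chi:W'_\chi]=|R_\chi|$, and the identification of the scalar-intertwiner group $W'_\chi$ with Silberger's $W'$ via linear independence of the $R_\chi$-basis---and both of those arguments are sound.
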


\begin{proof}
Fix $\chi\in X$. Since $W'_\chi$ acts on $\overline{H}$ by the projective character $\overline{\i_\chi}$, the $*$-homomorphism \begin{equation}\label{eq:I-rep}
\overline{I_\chi}:C^*_{\gamma}(W_\chi)\to \Bounded(\overline{H})
\end{equation}
induced by the $\gamma$-representation $w\mapsto \overline{I_{w,\chi}}$ factors through the projection onto the direct-summand $C^*_\gamma(W_\chi)P_{\i,\chi}$, where $P_{\i,\chi}$ is as in Lemma \ref{lem:Wprime-normal}. The identifications $(W_M)_{\sigma\otimes\chi}\cong {W}_\chi$ (Lemma \ref{lem:W-WM}) and $I_{w, \chi} = R(s,\sigma\otimes \chi)$ (\eqref{eq:I-R-equality}; here $w=\psi s$), together with theorems of Harish-Chandra 
\cite[Theorem 5.5.3.2]{Silberger-book} and Silberger \cite{Silberger-dimension}, ensure that the representation \eqref{eq:I-rep} is injective on $C^*_\gamma(W_\chi)P_{\i,\chi}$, and this injectivity is equivalent to the assertion that $\overline{I_\chi}$ contains all of the irreducible $\gamma$-representations of $W_{\chi}$ that lie over $\overline{\iota_{\chi}}$.
\end{proof}

We now obtain, as a special case of Theorem \ref{thm:general-Morita},

\begin{theorem}\label{thm:p-Morita}
The Hilbert module $E=C(X, \Ind_P^G H_\sigma)$, equipped with the twisted action of the finite group $W = (X\rtimes W_M)_\sigma$ by normalised intertwining operators from Definition \ref{def:pU},  implements a Morita equivalence between the direct-summand $C^*_r(G)_{(M,\sigma)}$ of $C^*_r(G)$, and the ideal 
\[
C(X,E,W) = \left\{ \sum_{w\in W} b_w w\in C(X)\rtimes_{\gamma} W\ \middle|\ \begin{aligned} &\forall \chi\in X,\ \forall w'\in W_\chi',\ \forall w\in W:\\ & b_{w'w}(\chi)  = \i_{w',\chi}\gamma(w',w) b_w(\chi)\end{aligned} \right\}.
\]
of the twisted crossed product $C(X)\rtimes_\gamma W$.
\hfill\qed
\end{theorem}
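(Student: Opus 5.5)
The plan is to specialise Theorem \ref{thm:general-Morita} to $X$ the torus of unitary unramified characters (compact, so $C_0(X)=C(X)$), $W=(X\rtimes W_M)_\sigma$ with its action $\alpha$ on $C(X)$, and $\gamma$ the cocycle of Theorem \ref{thm:pI-properties}(2), regarded as a constant $C(X,\T)$-valued function. We take $E=C(X,H)$ with the twisted action $U$ of Definition \ref{def:pU}; the lemma following that definition shows $U$ is an $(\alpha,\gamma)$-twisted action. Since $\gamma$ is scalar-valued and constant in $\chi$, it lies in the centre of the unitary multipliers, and the first compatibility condition of Definition \ref{def:twisted-action-B} reduces to $\alpha$ being an honest action. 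The cocycle identity \eqref{eq:twisted-2-cocycle} then follows from associativity applied to $I_{w_3,w_2w_1\chi}I_{w_2,w_1\chi}I_{w_1,\chi}$ together with Theorem \ref{thm:pI-properties}(2). Normalisation $\gamma(1,w)=\gamma(w,1)=1$ holds if we choose $T_1=\id$ and $\tilde 1=1$.

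Next I would check that the local data of Section \ref{sec:general} match the $p$-adic data. The fibre is $E_\chi=E\otimes_{C(X)}\C_\chi\cong H$ via $\xi\mapsto\xi(\chi)$. The general operator $I_{w,\chi}:E_\chi\to E_{w\chi}$, defined by $(U_w\xi)(w\chi)$, equals $I_{w,\chi}\xi(\chi)$ by Definition \ref{def:pU}, so it is the normalised intertwiner of Definition \ref{def:pI}. Consequently the groups $W'_\chi$ and scalars $\i_{w,\chi}$ in the two settings coincide. Theorem \ref{thm:E-completeness} then says precisely that $E$ satisfies the completeness condition of Definition \ref{def:completeness-condition}.

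With these verified, Theorem \ref{thm:general-Morita} gives a Morita equivalence, implemented by $E$, between $\Compact_{C(X)}(E)^W$ and the displayed ideal $C(X,E,W)$, with $\gamma_\chi(w',w)=\gamma(w',w)$. It remains to identify $\Compact_{C(X)}(C(X,H))$ with $C(X,\Compact(H))$, which is the standard identification for trivial Hilbert bundles: rank-one operators $\ket{\xi}\bra{\eta}$ correspond to $\chi\mapsto\ket{\xi(\chi)}\bra{\eta(\chi)}$, and these span a dense subspace of $C(X,\Compact(H))$. Under this identification, conjugation by $U_w$ becomes the action in \eqref{eq:k-fixed-I}, so $\Compact_{C(X)}(E)^W=C(X,\Compact(H))^W$, and Theorem \ref{thm:Plancherel}(1) identifies this with $C^*_r(G)_{(M,\sigma)}$.

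I expect no serious obstacle, since the substantive input (completeness) is already Theorem \ref{thm:E-completeness}. The one point needing care is the conjugation and inverse conventions: the general theory is stated for a $\overline{\gamma_x}$-representation $I_x$ with modules over $C_0(X)\rtimes_\gamma W$, and this has to be consistent with Theorem \ref{thm:pI-properties}(2), where $\overline\gamma$ appears. I would check that the third condition of Definition \ref{def:twisted-action-E}, $U_vU_w=U_{vw}\gamma(v,w)^*$, produces exactly the relation \eqref{eq:I-gamma} with the paper's $\gamma$ and not its conjugate. The paper's lemma on the twisted action already does this computation.
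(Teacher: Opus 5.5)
Your proposal is correct and follows the paper's route. The paper states this theorem as a direct specialisation of Theorem \ref{thm:general-Morita}: the twisted action comes from the lemma after Definition \ref{def:pU}, the completeness condition from Theorem \ref{thm:E-completeness}, and the identification $\Compact_{C(X)}(E)^W=C(X,\Compact(H))^W=C^*_r(G)_{(M,\sigma)}$ from \eqref{eq:k-fixed-I} and Theorem \ref{thm:Plancherel}(1). The extra checks you list — the cocycle identity and normalisation of $\gamma$, and the matching of the fibrewise operators with the normalised intertwiners $I_{w,\chi}$ — are left implicit in the paper, and they go through as you describe.
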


The Morita equivalence in Theorem \ref{thm:p-Morita} is somewhat complicated by the fact that (a) the algebra $C(X,E,W)$ is not the whole crossed product $C(X)\rtimes_\gamma W$, but an ideal therein; and (b) the identification of this ideal relies on detailed knowledge about the normalisation of the intertwining operators, namely the scalars $\i_{w',\chi}$. In the next section we will remove these complications by constructing a modified version $\widetilde{X}$ of the torus $X$.

\subsection{Blowing up the parameter space}

Let $G$, $M$, $X$, $\sigma$, and $W$ be as in Section \ref{sec:p-adic-background}. For the construction below it will be convenient to recall the following alternative description of the torus $X$. (See \cite[Chapitre V]{Renard}, for instance, for more details.)

Let $A_M$ denote the maximal $F$-split torus in the centre of $M$, and let $\germ{a}_M^* = \Hom_F(A_M,F^\times)\otimes_{\Z}\R$, where as above $\Hom_F(A_M,F^\times)$ denotes the finitely generated free abelian group of $F$-rational characters of $A_M$. The restriction map $\Hom_F(M,F^\times)\to \Hom_F(A_M,F^\times)$ is injective, and induces an isomorphism of $\R$-vector spaces $\Hom_F(M,F^\times)\otimes_{\Z} \R \xrightarrow{\cong}\germ{a}_M^*$. Each $\phi\otimes t\in \Hom_F(M,F^\times)\otimes_{\Z} \R$ determines an unramified unitary character $m\mapsto |\phi(m)|_F^{it}$ of $M$, and this construction gives rise to a surjective homomorphism of abelian groups $\Theta:\germ{a}_M^* \to X$. The group $W_M=N_G(M)/M$ acts by conjugation on $A_M$, and the covering map $\Theta:\germ{a}_M^*\to X$ is $W_M$-equivariant.

The vector space $\germ{a}^*_M$ is home to a root system $\Phi$: namely, the set of indivisible characters $\rho\in \Hom_{F}(A_M,F^\times)$ that appear in the adjoint action of $A_M$ on the Lie algebra of $M$. For each $\rho\in \Phi$ we let $s_\rho:\germ{a}^*_M\to\germ{a}^*_M$ denote the corresponding reflection, through the hyperplane $V_\rho\subset \germ{a}_M^*$ orthogonal to $\rho$ (with respect to an appropriate Weyl-group-invariant inner product).

\begin{definition}
$X' \coloneqq \{\chi\in X\ |\ W'_\chi\neq 1\}$. 
\end{definition}

Thus $X'$ is the set of unramified unitary characters for which at least one of the normalised intertwiners $I_{w,\chi}$ (or equivalently, $R(s,\sigma\otimes\chi)$) of $\Ind_P^G(\sigma\otimes\chi)$, for $w\neq 1$, is a scalar. Note that this set $X'$ depends on the discrete-series representation $\sigma$, but it does not depend on the chosen normalisation of the intertwining operators.

\begin{lemma}
$X'$ is a $W$-invariant, closed, nowhere-dense subset of $X$.
\end{lemma}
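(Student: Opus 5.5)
The proof has three parts: $W$-invariance, closedness and nowhere-density. I take them in that order.

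\emph{$W$-invariance.} This follows from Lemma \ref{lem:Wprime-normal}. For $w\in W$ and $\chi\in X$ we have $wW'_\chi w^{-1}=W'_{w\chi}$, so $W'_\chi\neq 1$ if and only if $W'_{w\chi}\neq 1$.

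\emph{Closedness.} Write $X'=\bigcup_{1\neq w\in W}{X'}^w$, where ${X'}^w=\{\chi\ |\ w\in W'_\chi\}$. Since $W$ is finite, it is enough to show that each ${X'}^w$ is closed. I will use the argument from the proof of Lemma \ref{lem:homotopy}. The fixed-point set $X^w=\{\chi\ |\ w\chi=\chi\}$ is closed. For $\chi\in X^w$ the operator $I_{w,\chi}$ preserves each finite-dimensional subspace $H^{K_j}$, and its restriction there varies norm-continuously in $\chi$. Its $n$th power, where $n$ is the order of $w$, is the constant scalar $c=\prod_i\overline{\gamma}(w,w^i)$. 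So the restriction is a diagonalisable operator whose eigenvalues lie in the finite set $\mu$ of $n$th roots of $c$. For each $z\in\mu$ the spectral projection is the polynomial $p_z(I_{w,\chi}|_{H^{K_j}})$, where $p_z$ is the Lagrange polynomial on $\mu$ that equals $1$ at $z$ and $0$ at the other points. This projection is therefore norm-continuous in $\chi$ on $X^w$, and hence its rank is locally constant. The condition ``$I_{w,\chi}=z\,\id$'' means that $p_z(I_{w,\chi}|_{H^{K_j}})=\id$ for all $j$, which is a closed condition in $\chi$. Taking the union over the finitely many $z\in\mu$ shows that ${X'}^w$ is closed. (In fact ${X'}^w$ is a union of connected components of $X^w$.)

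\emph{Nowhere-density.} I expect this step to be the main obstacle. Since $X'$ is a finite union of closed sets ${X'}^w\subseteq X^w$, it suffices to show that no ${X'}^w$ with $w\neq1$ has interior. If $w$ acts nontrivially on $X$, then $X^w$ is a proper closed subgroup coset; it is the fixed set of an affine automorphism $\chi\mapsto\psi\,{}^s\chi$ of the torus and is a proper real-analytic subset, so it has empty interior. The remaining case is $w\neq 1$ acting trivially on $X$, which happens for example when $w=\psi s$ with $s=1$ and $\psi\in X_\sigma$. I need to rule out $I_{w,\chi}$ being scalar for all $\chi$ in an open set $V$. The plan is to use the Plancherel density. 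By Silberger \cite{Silberger-dimension}, together with the description of $W'_\chi$ as the Weyl group of the root system of reflections $s_\rho$ at whose hyperplanes the Harish-Chandra $\mu$-function vanishes, every nontrivial element of $W'_\chi$ lies in the group generated by those reflections $s_\rho$ with $\mu_\rho(\sigma\otimes\chi)=0$. Each such reflection fixes only the image under $\Theta$ of the affine hyperplanes parallel to $V_\rho$, and each $\mu_\rho$ is a nonzero rational function of $\chi$. Consequently $X'$ is contained in the zero set of the Plancherel density $\mu(\sigma\otimes\chi)$, which is a nonzero real-analytic function on $X$, and this zero set is nowhere dense. If the root-system description is not available at generic $\chi$, I would argue as a fallback using Theorem \ref{thm:E-completeness}. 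At generic $\chi$ the representation $\Ind_P^G(\sigma\otimes\chi)$ is irreducible, so its commuting algebra is $\C$; by Harish-Chandra's commuting algebra theorem this forces $W'_\chi=W_\chi$. One then must check that elements acting trivially on $X$ cannot have scalar $I_{w,\chi}$ on an open set. This is where Silberger's theorem, that $W'_\chi$ is generated by reflections in roots, is genuinely needed, and it is the step I expect to be hardest.
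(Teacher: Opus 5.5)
Your $W$-invariance argument is the paper's. Your closedness argument is correct, though heavier than needed: the paper simply notes that a strong-operator limit of the scalar operators $I_{w,\chi_i}$ is again a scalar, so no finite-dimensional subspaces or spectral projections are required.

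The gap is in the nowhere-density step. Your ``remaining case'' of some $w\neq 1$ in $W$ acting trivially on $X$ never occurs, and your example of it is wrong. $X\rtimes W_M$ acts on $X$ by $(\psi s)\chi=\psi\,{}^s\chi$, so $\psi\in X_\sigma$ acts by translation, which is fixed-point free when $\psi\neq 1$. You are conflating this with the action on $E_2(M)$, where $\psi\in X_\sigma$ does fix $\sigma$; compare Section \ref{sec:discrete-series}, where $W=X_\sigma$ acts freely on $X$.

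Nor does any $s\neq 1$ in $W_M$ act trivially. Since $M=Z_G(A_M)$, the group $W_M=N_G(M)/M$ acts faithfully on $A_M$, hence faithfully on $\germ{a}_M^*$. A linear map $L$ of $\germ{a}_M^*$ with $Lx-x\in\Lambda$ for all $x$ must be the identity. So no nontrivial element of $X\rtimes W_M$ fixes a nonempty open subset of $X$. This is exactly the fact the paper invokes, and with it your first branch (each $X^w$, $w\neq 1$, is empty or a coset of a proper closed subgroup) already completes the proof.

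As written, though, your argument rests on an unfinished Silberger/Plancherel-density sketch for this phantom case. Your own fallback remark shows it is the wrong place to look. If some $w\neq 1$ fixed all of $X$, then at every generic $\chi$, where you note $\Ind_P^G(\sigma\otimes\chi)$ is irreducible, $I_{w,\chi}$ would lie in the commuting algebra $\C$. Then $X'$ would contain a dense open set and the lemma would be false. So the case has to be shown empty, not handled by analysing $I_{w,\chi}$.

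If you grant Silberger's theorem at every $\chi$ in the form you quote, your sentence placing $X'$ inside the zero set of the Plancherel density would give nowhere-density of all of $X'$ at once. But you leave it as a tentative plan, and it uses far deeper input than this elementary point requires.
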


\begin{proof}\label{lem:X'-properties}
The fact that $W'_{w \chi}=wW'_\chi w^{-1}$ (Lemma \ref{lem:Wprime-normal})  ensures that $X'$ is $W$-invariant. 

For each $w\in W$ the set ${X'}^{w}=\{\chi\in X\ |\ w\in W'_\chi\}$ is closed in $X$. Indeed, if $\chi_i$ is a net in ${X'}^{w}$ converging to $\chi\in X$, then certainly $w$ fixes $\chi$ (because $w$ fixes each $\chi_i$); and moreover, the scalar intertwining operators $I_{w,\chi_i}$ converge to $I_{w,\chi}$ in the strong-operator topology (Theorem \ref{thm:pI-properties}), which forces $I_{w,\chi}$ to be a scalar too, and so $\chi\in {X'}^w$.  Now $X'$ is the union of the finitely many closed subsets ${X'}^w$ (for $w\neq 1$), so $X'$ is closed.

To see that $X'$ is nowhere dense in $X$, note that the only element of $X\rtimes W_M$ that fixes a nonempty open subset of $X$ is the identity. (This is true in general for the action of $L\rtimes \Aut(L)$ on $L$, where $L$ is a connected Lie group.) So for each $w\in W\setminus \{1\}$ the fixed-point set $X^w$ is nowhere dense in $X$. We have ${X'}^w\subseteq X^w$, and $X'=\bigcup_{w\in W\setminus \{1\}} {X'}^{w}$, so $X'$ is nowhere dense.
\end{proof}

The space $\widetilde{X}$ that we shall now construct can alternatively be obtained as an iterated blow-up, in the sense of \cite[Chapter 5]{Melrose}, of the torus $X$ along the space $X'$ (which is a union of finitely many pairwise-transverse, codimension-$1$ submanifolds of $X$.) Since we are dealing here with a very simple special case of the blow-up construction we shall not invoke the general definition, but instead proceed as follows.

\begin{definition}
We set $(\germ{a}_M^*)'\coloneqq \Theta^{-1}(X')$, and we then define  $\widetilde{\germ{a}_M^*}$ to be the disjoint union of the closures of the connected components of the complement $\germ{a}_M^*\setminus (\germ{a}_M^*)'$.
\end{definition}

Let $\Lambda$ denote the lattice $\ker\Theta\subseteq \germ{a}_M^*$. The subset $(\germ{a}_M^*)'\subseteq \germ{a}_M^*$ is obviously $\Lambda$-invariant, and the action of $\Lambda$ on $\germ{a}_M^*$ is continuous, and so this action induces an action of $\Lambda$ on $\widetilde{\germ{a}_M^*}$. 

\begin{definition}\label{def:widetildeX}
We let $\widetilde{X}$ denote the quotient space $\widetilde{\germ{a}_M^*}/\Lambda$.
\end{definition}

\begin{remark}
Note that $\widetilde{X}$ is not, in general, the disjoint union of the closures of the connected components of $X\setminus X'$. For example, let  $G=\SL_2(F)$, let $M$ be the diagonal subgroup, and let $\sigma$ be the trivial representation of $M$. Then the torus $X$ is a circle, and $X'$ is a single point. Since $X\setminus X'$ is connected, the disjoint union of the closures of the connected components of $X\setminus X'$ is just the circle $X$, whereas $\widetilde{X}$ is a closed interval.
\end{remark}

The action of $\Lambda$ on $\widetilde{\germ{a}_M^*}$ is proper, so $\widetilde{X}$ is a Hausdorff space. We will show below that $\widetilde{X}$ is compact: see Lemma \ref{lem:widetildeX-compact}. The space $\widetilde{X}$ is computed in several examples in Section \ref{sec:examples}.

Since $\widetilde{\germ{a}_M^*}$ is a disjoint union of closed subsets of $\germ{a}_M^*$, there is a canonical `gluing' (or `blowing-down') map $\Pi:\widetilde{\germ{a}_M^*}\to \germ{a}_M^*$, defined to be the identity on each connected component of $\widetilde{\germ{a}_M^*}$. This map is $\Lambda$-equivariant, and so it induces a surjection  $\Pi: \widetilde{X}\to X$. 

\begin{lemma}
There is a unique action $\widetilde{\alpha}$ of $W$ on $\widetilde{X}$ for which the map $\Pi:\widetilde{X}\to X$ is $W$-equivariant.
\end{lemma}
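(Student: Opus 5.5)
Uniqueness is immediate: $\Pi$ restricts to a homeomorphism from $\Pi^{-1}(X\setminus X')$ onto the dense open set $X\setminus X'$ (by the nowhere-density in Lemma \ref{lem:X'-properties}). Its preimage is dense in $\widetilde{X}$, since each piece of $\widetilde{\germ{a}_M^*}$ is the closure of an open component. So any two actions making $\Pi$ equivariant agree on a dense set, and hence everywhere, because $\widetilde{X}$ is Hausdorff.

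For existence, I will lift the action to $\germ{a}_M^*$. Each $w=\psi s\in W$ acts on $X$ by $\chi\mapsto\psi\,{}^s\chi$. Choosing $\lambda_\psi\in\germ{a}_M^*$ with $\Theta(\lambda_\psi)=\psi$, the affine map $\tilde w:\nu\mapsto \lambda_\psi+s\nu$ is a homeomorphism of $\germ{a}_M^*$ covering $w$, since $\Theta$ is a $W_M$-equivariant homomorphism. The lifts of $w$ are exactly the maps $\nu\mapsto\tilde w(\nu)+\lambda$ for $\lambda\in\Lambda$. Each lift normalises the $\Lambda$-action, because $s\Lambda=\Lambda$. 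Since $X'$ is $W$-invariant (Lemma \ref{lem:X'-properties}), $(\germ{a}_M^*)'=\Theta^{-1}(X')$ is invariant under every such lift. So $\tilde w$ permutes the connected components of $\germ{a}_M^*\setminus(\germ{a}_M^*)'$, and it also permutes their closures. It therefore induces a homeomorphism of the disjoint union $\widetilde{\germ{a}_M^*}$, sending the copy of $\overline{C}$ to the copy of $\overline{\tilde wC}$ by $\tilde w$. This map commutes with the gluing map $\Pi$ and intertwines the $\Lambda$-action up to the automorphism $s$ of $\Lambda$, so it descends to a homeomorphism $\widetilde\alpha_w$ of $\widetilde X=\widetilde{\germ{a}_M^*}/\Lambda$.

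Two lifts of the same $w$ differ by an element of $\Lambda$, so they induce the same map on the quotient; hence $\widetilde\alpha_w$ is independent of the choice of $\lambda_\psi$. By construction $\Pi\circ\widetilde\alpha_w=w\circ\Pi$. The action axioms hold for the same reason. The lifts $\tilde v\tilde w$ and $\widetilde{vw}$ both cover $vw$, so they differ by an element of $\Lambda$, and therefore $\widetilde\alpha_v\widetilde\alpha_w=\widetilde\alpha_{vw}$; likewise $\widetilde\alpha_1=\id$. Alternatively, both identities follow from the uniqueness argument on the dense set.

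The main point requiring care is that a lift carries components of the complement to components, and closures to closures, compatibly with the disjoint-union topology. This holds because the lift is a homeomorphism of $\germ{a}_M^*$ preserving $(\germ{a}_M^*)'$. One further check is that the $\Lambda$-equivariance twisted by $s$ genuinely descends to the quotient: the orbit of $\lambda+x$ maps to the orbit of $s\lambda+\tilde w x$, which is the same $\Lambda$-orbit as that of $\tilde w x$.
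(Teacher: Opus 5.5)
Your proposal is correct and follows the same route as the paper. Uniqueness comes from injectivity of $\Pi$ over the dense $W$-invariant set $X\setminus X'$. Existence comes from lifting $w=\psi s$ to the affine map $\nu\mapsto s\nu+\lambda_\psi$, which preserves $(\germ{a}_M^*)'$, permutes the closed pieces of $\widetilde{\germ{a}_M^*}$, and descends modulo $\Lambda$. You also supply the details the paper leaves as ``easily verified'': independence of the choice of lift, the action axioms, and the density/Hausdorff step in uniqueness.
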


\begin{proof}
The uniqueness follows from the fact that the map $\Pi$ is one-to-one over the dense, $W$-invariant subset $X\setminus X'$ of $X$.

For the existence: given $w=\psi s\in W$ (with $\psi\in X$ and $s\in W_M$), choose a point $y\in \Theta^{-1}(\psi)$. The affine isometry $a:x\mapsto s(x)+y$ of $\germ{a}_M^*$ then satisfies $\Theta\circ a=w\circ \Theta$. This isometry preserves the subset $(\germ{a}_M^*)'$ (because $X'$ is $W$-invariant), and so it permutes the closures of the connected components of $\germ{a}_M^*\setminus (\germ{a}_M^*)'$. Passing back to the quotient by $\Lambda$, we obtain a homeomorphism $\widetilde{\alpha}(w):\widetilde{X}\to \widetilde{X}$ satisfying $\Pi\circ \widetilde{\alpha}(w)=\alpha(w)\circ\Pi$ for all $\widetilde\chi\in \widetilde{X}$. It is easily verified that this construction is compatible with composition in $W$, and so we get an action of $W$ on $\widetilde{X}$ making the map $\Pi$ equivariant.
\end{proof}

Here is the key property of the space $\widetilde{X}$:

\begin{lemma}\label{lem:Wprime-simply-transitive}
For each $\chi\in X$ the group $W'_\chi$ permutes the set $\Pi^{-1}(\chi)\subset \widetilde{X}$ simply transitively.
\end{lemma}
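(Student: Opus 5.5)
The plan is to reduce the statement to the classical fact that a finite reflection group permutes its Weyl chambers simply transitively. Fix $\chi\in X$ and a lift $x\in\Theta^{-1}(\chi)$. Each $w\in W_\chi$ lifts, as in the proof of the preceding lemma, to a unique affine isometry $\hat w$ of $\germ{a}_M^*$ fixing $x$. The translates $x+\lambda$ ($\lambda\in\Lambda$) all map to $\chi$. The $\Lambda$-action identifies the pieces of $\widetilde{\germ{a}_M^*}$ containing these translates, so $\Pi^{-1}(\chi)$ is in bijection with the set of closures of components of $\germ{a}_M^*\setminus(\germ{a}_M^*)'$ that contain $x$. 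The action $\widetilde\alpha(w)$ of $w\in W_\chi$ on this fibre is induced by $\hat w$, since $\hat w$ is an admissible choice of the isometry $a$ in the construction of $\widetilde\alpha$.

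\textbf{Step 1: local structure of $(\germ{a}_M^*)'$ near $x$.} Take a small ball $B$ around $x$ such that $W_{\chi'}\subseteq W_\chi$ for all $\chi'\in\Theta(B)$; this is possible because $W$ is finite and acts continuously. I claim that
\[
B\cap(\germ{a}_M^*)' \;=\; B\cap\bigcup_{1\neq w\in W'_\chi}\mathrm{Fix}(\hat w).
\]
For "$\supseteq$": if $w\in W'_\chi$ and $y\in B$ is fixed by $\hat w$, the straight segment from $x$ to $y$ is fixed by $\hat w$. Its image in $X$ is a path along which $w$ stays in the stabiliser, so Lemma \ref{lem:homotopy} gives $w\in W'_{\Theta(y)}$.

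For "$\subseteq$": if $w\in W'_{\Theta(y)}$ with $w\neq1$, then $w\in W_\chi$ by the choice of $B$. The lift $\hat w$ fixes both $x$ and $y$, hence fixes the segment joining them. Applying Lemma \ref{lem:homotopy} to the reversed path then gives $w\in W'_\chi$.

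\textbf{Step 2: reflection group input.} By Silberger \cite{Silberger-dimension}, via Lemma \ref{lem:W-WM} and \eqref{eq:I-R-equality}, $W'_\chi$ is the Weyl group of a root system. More precisely, the linear parts of the $\hat w$, for $w\in W'_\chi$, form a finite group generated by the reflections $s_\rho$ that it contains. This linearised action is faithful: by Step 1 of the proof of Lemma \ref{lem:X'-properties}, a nontrivial element of $X\rtimes W_M$ cannot fix an open set. By Steinberg's theorem, the fixed set of any element of a finite reflection group is the intersection of the reflecting hyperplanes containing it. Hence the union in Step 1 is just the union of the reflecting hyperplanes $x+V_\rho$ of $W'_\chi$, and $B\setminus(\germ{a}_M^*)'$ is the intersection of $B$ with the open Weyl chambers of $W'_\chi$ based at $x$.

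\textbf{Step 3: matching components.} Since $(\germ{a}_M^*)'$ is a locally finite union of affine hyperplanes (by Step 1 applied at every point), each component of its complement is convex. Therefore a component whose closure contains $x$ meets $B$ in exactly one local chamber. Conversely, every local chamber lies in exactly one such component. This gives a $W'_\chi$-equivariant bijection between $\Pi^{-1}(\chi)$ and the set of Weyl chambers of $W'_\chi$. Simple transitivity of a reflection group on its chambers then finishes the proof.

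\textbf{Main obstacle.} I expect the hardest part to be Step 2. One must check carefully that Silberger's result really concerns the linear action on $\germ{a}_M^*$ of the lifted elements $\hat w$, so that $W'_\chi$ is generated by reflections in hyperplanes through $x$, rather than merely being abstractly isomorphic to a Weyl group. If that identification is delicate, I would first try to prove it directly. For each reflection $s_\rho$ in $W'_\chi$, the Plancherel density vanishes along $x+V_\rho$, which should identify the reflection subgroup generated by such $s_\rho$ with $W'_\chi$.
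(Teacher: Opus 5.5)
Your proposal is correct and is essentially the paper's argument. You identify $\Pi^{-1}(\chi)$ with the components of $\germ{a}_M^*\setminus(\germ{a}_M^*)'$ whose closure contains a lift $x$, use Lemma \ref{lem:homotopy} and Silberger's theorem to show that near $x$ the set $(\germ{a}_M^*)'$ is the union of the reflecting hyperplanes of $W'_\chi$, and conclude by simple transitivity on Weyl chambers. The remaining differences are cosmetic: you use affine lifts $\hat w$ fixing $x$ where the paper twists $\sigma$ to reduce to $\chi=1$, and convexity of components where the paper observes directly that a separating hyperplane lies entirely in $(\germ{a}_M^*)'$ (your ``$\supseteq$'' argument already shows this, since it never uses $y\in B$).

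The one thing to add is in the ``$\subseteq$'' half of Step 1: the ball $B$ must also be small enough that $\Theta$ is injective on it. That injectivity, together with $\hat w(B)=B$, is what upgrades $\Theta(\hat w y)=\Theta(y)$ to $\hat w y=y$.
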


\begin{proof}
We can assume without loss of generality that $\chi=1$, by replacing the discrete-series representation $\sigma$ by $\sigma\otimes\chi$.  Consulting the definitions of $\widetilde{X}$ and $\Pi$ shows that the set $\Pi^{-1}(1)$ is in $W_1$-equivariant bijection with the set of connected components of $\germ{a}_M^*\setminus (\germ{a}_M^*)'$ whose closure contains $0$.

The group $W'_1 = \{s\in (W_M)_\sigma\ |\ I_{s,1}\text{ is a scalar}\}$ is known (by \cite{Silberger-dimension}) to be the Weyl group of the root system
\[
\Phi' \coloneqq \{\rho\in \Phi\ |\ s_\rho\in W'_1\}.
\]
(Strictly speaking $\Phi'$ is a root system in the subspace of $\germ{a}_M^*$ that it generates; but we will continue to work in the full space $\germ{a}_M^*$.) 

We observed in the proof of Lemma \ref{lem:X'-properties} that for each nontrivial $w\in W$ the set ${X'}^{w}=\{\phi\in X\ |\ w\in W'_\phi\}$ is nowhere dense in $X$. So we can find an open neighbourhood $U$ of $1$ in $X$ that intersects only those ${X'}^{w}$s that contain $1$: in other words, such that for all $\phi\in U$ we have $W'_\phi\subseteq W'_1$. We then choose an open ball $B$ centred at $0\in \germ{a}^*_M$, small enough so that the quotient map $\Theta:\germ{a}_M^*\to X$ is injective on $B$, and so that $\Theta(B)\subseteq U$.

We claim that 
\begin{equation}\label{eq:aM'-cap-B}
(\germ{a}_M^*)'\cap B = \bigcup_{\rho\in \Phi'}(V_\rho\cap B),
\end{equation}
where $V_\rho$ is the hyperplane in $\germ{a}_M^*$ orthogonal to the root $\rho$. To prove the inclusion `$\subseteq$', take $x\in (\germ{a}_M^*)'\cap B$. Since $x\in (\germ{a}_M^*)'$ the group $W_{\Theta(x)}'$ is nontrivial. Since $\Theta(x)\in U$ we have $W'_{\Theta(x)}\subseteq W'_1$, so $\Theta(x)$ is fixed by some nontrivial element $s\in W'_1$. Since $\Theta$ is $W_M$-equivariant and injective on $B$, it follows that $x$ is fixed by $s$. Since $W'_1$ is the Weyl group of $\Phi'$, $x$ must lie in one of the root hyperplanes $V_\rho$.

To prove the inclusion `$\supseteq$' in \eqref{eq:aM'-cap-B}, suppose that $x\in V_\rho\cap B$ for some $\rho\in \Phi'$. The continuous path $\Theta(tx)$ ($t\in [0,1]$) in $X$ consists of $s_\rho$-fixed points, and $s_\rho\in W'_{\Theta(0x)}=W'_1$, so Lemma \ref{lem:homotopy} ensures that $s_\rho\in W'_{\Theta(x)}$, and so $x\in (\germ{a}_M^*)'$.

Having established the equality \eqref{eq:aM'-cap-B}, we deduce from it that the set of connected components of $\big(\germ{a}_M^*\setminus (\germ{a}_M^*)'\big)\cap B$ is in  $W_1$-equivariant bijection with the set of Weyl chambers for the root system $\Phi'$. 

If $x_0$ and $x_1$ lie in distinct connected components of $\big(\germ{a}_M^*\setminus (\germ{a}_M^*)'\big)\cap B$, then the equality \eqref{eq:aM'-cap-B} shows that $x_0$ and $x_1$ must lie on opposite sides of at least one of the hyperplanes $V_\rho$, and so every continuous path in $\germ{a}_M^*$ connecting $x_0$ to $x_1$ must contain a point $x\in V_\rho$. Another application of Lemma \ref{lem:homotopy} shows that $s_\rho\in W'_{\Theta(x)}$, and so $x\in (\germ{a}_M^*)'$. Thus $x_0$ and $x_1$ also lie in distinct connected components of $\germ{a}_M^*\setminus (\germ{a}_M^*)'$.

We now have $W_1$-equivariant bijections
\[
\begin{aligned}
\Pi^{-1}(1) & \cong \left\{\text{connected components of $\germ{a}_M^*\setminus(\germ{a}_M^*)'$ whose closure contains $0$} \right\} \\
& \cong \left\{\text{connected components of $\big(\germ{a}_M^*\setminus(\germ{a}_M^*)\big)\cap B$}\right\} \\
& \cong \left\{\text{Weyl chambers for the root system $\Phi'$}\right\}.
\end{aligned}
\]
The Weyl group $W'_1$ permutes the Weyl chambers for $\Phi'$ simply transitively, so the same is true of the action on $\Pi^{-1}(1)$.
\end{proof}

\begin{lemma}\label{lem:W_chi-splitting}
For each $\chi\in X$ and $\widetilde{\chi}\in \Pi^{-1}(\chi)$ we have $W_\chi = W'_\chi\rtimes W_{\widetilde\chi}$.
\end{lemma}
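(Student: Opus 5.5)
The plan is to derive the splitting from Lemma \ref{lem:Wprime-simply-transitive}, using the standard fact that a group acting on a set containing a normal subgroup that acts simply transitively is a semidirect product with any point stabiliser.

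First, because $\Pi$ is $W$-equivariant, the stabiliser $W_\chi$ preserves the fibre $\Pi^{-1}(\chi)$. Therefore $W_\chi$ acts on this finite set. The stabiliser of $\widetilde\chi$ in $W$ automatically lies in $W_\chi$, since $w\widetilde\chi=\widetilde\chi$ implies $w\chi=w\Pi(\widetilde\chi)=\Pi(w\widetilde\chi)=\chi$. So $W_{\widetilde\chi}$ is exactly the stabiliser of the point $\widetilde\chi$ for the action of $W_\chi$ on $\Pi^{-1}(\chi)$. By Lemma \ref{lem:Wprime-normal}, $W'_\chi$ is a normal subgroup of $W_\chi$.

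Next come the two halves of the semidirect product decomposition.

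\emph{Trivial intersection.} If $w\in W'_\chi\cap W_{\widetilde\chi}$, then $w$ fixes $\widetilde\chi$. Since $W'_\chi$ acts simply transitively on $\Pi^{-1}(\chi)$ by Lemma \ref{lem:Wprime-simply-transitive}, this forces $w=1$.

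\emph{Generation.} Take $w\in W_\chi$. Then $w\widetilde\chi\in\Pi^{-1}(\chi)$, so transitivity of $W'_\chi$ gives some $w'\in W'_\chi$ with $w'w\widetilde\chi=\widetilde\chi$. Hence $w'w\in W_{\widetilde\chi}$, and
\[
w=(w')^{-1}(w'w)\in W'_\chi W_{\widetilde\chi}.
\]

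Together with the normality of $W'_\chi$, these two facts give $W_\chi=W'_\chi\rtimes W_{\widetilde\chi}$.

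There is no serious obstacle: all the geometric content sits in Lemma \ref{lem:Wprime-simply-transitive}, and the argument above is purely group-theoretic.
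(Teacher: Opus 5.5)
Your proof is correct and follows the paper's argument: normality comes from Lemma \ref{lem:Wprime-normal}, trivial intersection from the ``simply'' part of Lemma \ref{lem:Wprime-simply-transitive}, and surjectivity from the ``transitive'' part. The only cosmetic difference is that the paper gets surjectivity of the multiplication map $W'_\chi\times W_{\widetilde\chi}\to W_\chi$ from the orbit-stabiliser count $|W_\chi|=|W'_\chi|\cdot|W_{\widetilde\chi}|$, whereas you factor each $w\in W_\chi$ directly.
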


\begin{proof}
Since $W'_\chi$ is a normal subgroup of $W_\chi$ (Lemma \ref{lem:Wprime-normal}), it is certainly normalised by the subgroup $W_{\widetilde{\chi}}$ of $W_\chi$. Consider the multiplication map $W'_\chi\times W_{\widetilde{\chi}}\to W_\chi$. This map is injective because $W'_\chi\cap W_{\widetilde{\chi}}=1$ (by the `simply' part of Lemma \ref{lem:Wprime-simply-transitive}). On the other hand, the `transitive' part of  Lemma \ref{lem:Wprime-simply-transitive} and the orbit-stabiliser theorem imply that $|W_\chi|=|W'_\chi|\cdot |W_{\widetilde{\chi}}|$, so the multiplication map is also surjective.
\end{proof}

\begin{remark}
Examining the proof of Lemma \ref{lem:Wprime-simply-transitive} shows that for each $\widetilde{\chi}\in \widetilde{X}$ the group $W_{\widetilde\chi}$ is the Knapp-Stein $R$-group $R_{\sigma\otimes\chi}$ associated to the character $\chi=\Pi(\widetilde\chi)$ and the system of positive roots for $W'_\chi$ determined by the point $\widetilde{\chi}$ (cf.~\cite[\S13]{Knapp-Stein-II}.)
\end{remark}

\begin{lemma}\label{lem:widetildeX-compact}
The space $\widetilde{X}$ is compact.
\end{lemma}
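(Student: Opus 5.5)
Since $\widetilde{X}=\widetilde{\germ{a}_M^*}/\Lambda$ is Hausdorff (the action of $\Lambda$ is proper), it suffices to show that $\widetilde{X}$ is covered by the images of finitely many compact subsets of $\widetilde{\germ{a}_M^*}$. The approach is to show that the fibres of $\Pi:\widetilde{X}\to X$ are finite and that $\Pi$ is proper. Equivalently, we show that $\Pi$ is a closed map with finite fibres over the compact space $X$.

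The first step is a local model for the components. By Lemma \ref{lem:Wprime-simply-transitive}, each fibre $\Pi^{-1}(\chi)$ is finite, of cardinality $|W'_\chi|$. Moreover the proof of that lemma gives more. Fix $\chi\in X$, a lift $x\in\germ{a}_M^*$ with $\Theta(x)=\chi$, and the small ball $B_x$ around $x$ constructed there (after translating $\sigma$ by $\chi$). Then $(\germ{a}_M^*)'\cap B_x$ is a finite union of hyperplanes through $x$. Hence $B_x\setminus(\germ{a}_M^*)'$ has finitely many components $C_1,\dots,C_m$, each an open convex cone intersected with the ball. By the path argument in that proof, distinct $C_i$ lie in distinct components of $\germ{a}_M^*\setminus(\germ{a}_M^*)'$. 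Consequently the preimage of $B_x$ in $\widetilde{\germ{a}_M^*}$ is the disjoint union of the sets $\overline{C_i}\cap B_x$, each viewed inside the corresponding closed component. Shrinking $B_x$ to a closed ball $\overline{B}_x$ of slightly smaller radius (still satisfying the same properties), the preimage of $\overline{B}_x$ in $\widetilde{\germ{a}_M^*}$ is a finite union of compact sets $\overline{C_i}\cap\overline{B}_x$.

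The second step is to cover and conclude. The open balls $B_x$, for $x$ ranging over a compact fundamental domain $D$ for $\Lambda$ in $\germ{a}_M^*$, cover $D$. By compactness of $D$ finitely many of them suffice. The preimage in $\widetilde{\germ{a}_M^*}$ of the union of the corresponding closed balls is then a finite union of compact sets, hence compact. This preimage surjects onto $\widetilde{X}$, because every point of $\widetilde{\germ{a}_M^*}$ is a $\Lambda$-translate of a point lying over $D$. As the continuous image of a compact set, $\widetilde{X}$ is compact.

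The main obstacle is the first step. One must verify that the topology of $\widetilde{\germ{a}_M^*}$, as a disjoint union of closed subsets of $\germ{a}_M^*$, restricts on the preimage of a small ball to exactly the finitely many local pieces $\overline{C_i}\cap\overline{B}_x$. The danger is that a single global closed component might meet $B_x$ in several local chambers, or that infinitely many global components might accumulate near $x$. Both possibilities are excluded by the hyperplane description \eqref{eq:aM'-cap-B}. Within a ball the complement of the hyperplanes has finitely many components, and each global component meets $B_x$ in a union of these. Each point of the preimage of $B_x$ lies in the closure of one of them. So only finitely many global components meet $B_x$, and each contributes a closed, hence compact, piece of $\overline{B}_x$.
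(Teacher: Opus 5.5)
Your argument is correct and is essentially the paper's proof. The paper also uses the local hyperplane description from the proof of Lemma \ref{lem:Wprime-simply-transitive} to see that each point of $\germ{a}_M^*$ has a neighbourhood meeting only finitely many components of $\germ{a}_M^*\setminus(\germ{a}_M^*)'$, deduces that $\Pi:\widetilde{\germ{a}_M^*}\to\germ{a}_M^*$ is proper, and then obtains $\widetilde{X}$ as the image of $\Pi^{-1}(S)$ for a compact $S$ mapping onto $X$, exactly as you do with your fundamental domain $D$. The only thing to tidy is your second step: you should cover $D$ by the interiors of the shrunken closed balls rather than by the original open balls $B_x$, so that the finitely many closed balls you pull back really do cover $D$.
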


\begin{proof}
The proof of Lemma \ref{lem:Wprime-simply-transitive} showed that each point in $\germ{a}_M^*$ has an open neighbourhood intersecting only finitely many connected components of $\germ{a}_M^*\setminus (\germ{a}_M^*)'$, and it follows from this that the gluing map $\Pi:\widetilde{\germ{a}_M^*}\to \germ{a}_M^*$ is proper. Now $X$ is the image, under the quotient map $\germ{a}_M^*\to X$, of a compact subset $S\subseteq \germ{a}_M^*$, and so $\widetilde{X}$ is the image, under the quotient map $\widetilde{\germ{a}_M^*}\to \widetilde{X}$, of the compact set $\Pi^{-1}(S)$.
\end{proof}

\begin{lemma}\label{lem:blow-up-quotient}
The gluing map $\Pi:\widetilde{X}\to X$ induces a homeomorphism $\widetilde{X}/W\cong X/W$.
\end{lemma}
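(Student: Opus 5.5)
Since $\Pi:\widetilde{X}\to X$ is a continuous $W$-equivariant surjection, it descends to a continuous surjection $\overline{\Pi}:\widetilde{X}/W\to X/W$. Both spaces are compact Hausdorff: $\widetilde{X}$ is compact by Lemma \ref{lem:widetildeX-compact} and Hausdorff, and quotients of compact Hausdorff spaces by finite groups are again compact Hausdorff. So a continuous bijection between them is automatically a homeomorphism. The whole proof therefore reduces to showing that $\overline{\Pi}$ is injective.

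Injectivity means the following. Suppose $\widetilde\chi_1,\widetilde\chi_2\in\widetilde{X}$ satisfy $\Pi(\widetilde\chi_1)=w\Pi(\widetilde\chi_2)$ for some $w\in W$. We must show that $\widetilde\chi_1$ and $\widetilde\chi_2$ lie in the same $W$-orbit. By equivariance, $\Pi(\widetilde\chi_1)=\Pi(w\widetilde\chi_2)$. So it suffices to show that any two points in a single fibre $\Pi^{-1}(\chi)$ are $W$-conjugate. This follows from Lemma \ref{lem:Wprime-simply-transitive}, which says that $W'_\chi\subseteq W$ acts transitively on $\Pi^{-1}(\chi)$. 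Hence there is $w'\in W'_\chi$ with $w'w\widetilde\chi_2=\widetilde\chi_1$.

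There is no real obstacle. The only points needing care are the general topological facts. First, $\widetilde{X}/W$ is Hausdorff: the orbit map of a finite group is closed, and orbits are finite, hence closed. Second, continuity of $\overline{\Pi}$ follows from the universal property of the quotient topology. All the substantive input has already been supplied by the transitivity statement in Lemma \ref{lem:Wprime-simply-transitive} and the compactness in Lemma \ref{lem:widetildeX-compact}.
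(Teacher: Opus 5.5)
Your proposal is correct and follows essentially the same route as the paper: $\Pi$ induces a continuous surjection on orbit spaces, and injectivity comes from the transitivity of $W'_\chi$ on each fibre $\Pi^{-1}(\chi)$ (Lemma \ref{lem:Wprime-simply-transitive}). Your compact-Hausdorff argument, which upgrades the continuous bijection to a homeomorphism and which the paper leaves implicit, is also sound; the one loose step is Hausdorffness of $\widetilde{X}/W$, where closed points and a closed orbit map give only $T_1$, so you should either add normality of $\widetilde{X}$ or separate two distinct finite orbits directly by saturated open sets.
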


\begin{proof}
Since $\Pi$ is a continuous, $W$-equivariant surjection, it induces a continuous surjection on orbit spaces. This map on orbit spaces is also injective, since Lemma \ref{lem:Wprime-simply-transitive} implies  that each fibre of $\Pi$ is contained in a single $W$-orbit.
\end{proof}

Turning to $C^*$-algebras, we let $W$ act on the $C^*$-algebra $C(\widetilde{X},\Compact(H))$ by
\[
(w\widetilde{k})(\widetilde\chi)\coloneqq I_{w,w^{-1}\chi} \widetilde{k}(w^{-1}\widetilde\chi) I_{w,w^{-1}\chi}^*
\]
where $w\in W$, $\widetilde{k}\in C(\widetilde{X},\Compact(H))$, $\widetilde{\chi}\in \widetilde{X}$, $\chi=\Pi(\widetilde\chi)\in X$, and where $I_{w,w^{-1}\chi}$ is the unitary operator on $H$ defined in Definition \ref{def:pI}. The pullback homomorphism $\Pi^*:C(X,\Compact(H))\to C(\widetilde{X},\Compact(H))$ (defined by $(\Pi^* k)(\widetilde{\chi})=k(\Pi\widetilde{\chi})$) is $W$-equivariant, and thus restricts to a morphism of $W$-fixed-point algebras.

\begin{lemma}\label{lem:Pi-Cstar-iso}
The pullback map $\Pi^*:C(X,\Compact(H))^W\to C(\widetilde{X},\Compact(H))^W$ is an isomorphism of $C^*$-algebras.
\end{lemma}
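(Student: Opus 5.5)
Injectivity of $\Pi^*$ is immediate, because $\Pi$ is surjective: if $\Pi^*k=0$ then $k(\Pi\widetilde\chi)=0$ for every $\widetilde\chi$, so $k=0$. An injective $*$-homomorphism between $C^*$-algebras is isometric, so its image is closed. The whole proof therefore reduces to showing that $\Pi^*$ is surjective onto $C(\widetilde{X},\Compact(H))^W$. My plan is to show that every $W$-invariant $\widetilde{k}$ is constant on the fibres of $\Pi$, and then that it descends to a continuous, $W$-invariant function on $X$.

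\emph{Fibre constancy.} Fix $\chi\in X$ and two points $\widetilde\chi_1,\widetilde\chi_2\in\Pi^{-1}(\chi)$. By Lemma \ref{lem:Wprime-simply-transitive} there is a $w'\in W'_\chi$ with $w'\widetilde\chi_1=\widetilde\chi_2$. The element $w'$ fixes $\chi$, and $I_{w',\chi}=\i_{w',\chi}\id_H$ is a scalar. The invariance $w'\widetilde k=\widetilde k$, evaluated at $\widetilde\chi_2$, therefore gives
\[
\widetilde k(\widetilde\chi_2)=I_{w',\chi}\,\widetilde k(\widetilde\chi_1)\,I_{w',\chi}^*=\widetilde k(\widetilde\chi_1).
\]
Consequently there is a unique function $k:X\to\Compact(H)$ with $\widetilde k=k\circ\Pi$.

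\emph{Continuity of $k$.} Since $\widetilde X$ is compact (Lemma \ref{lem:widetildeX-compact}) and $X$ is Hausdorff, $\Pi$ is a closed continuous surjection, hence a quotient map. A function $k$ on $X$ is therefore continuous exactly when $k\circ\Pi$ is continuous. (Alternatively, one can argue directly with nets, using properness of $\Pi$.)

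\emph{$W$-invariance of $k$.} For $w\in W$, the $W$-equivariance of $\Pi$ gives
\[
(wk)\circ\Pi=w(k\circ\Pi)=w\widetilde k=\widetilde k=k\circ\Pi,
\]
and surjectivity of $\Pi$ then yields $wk=k$. Thus $k\in C(X,\Compact(H))^W$ and $\Pi^*k=\widetilde k$, which proves surjectivity.

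The only step that needs care is fibre constancy, since it is the one place where the geometry of the blow-up enters. It rests entirely on the transitivity statement of Lemma \ref{lem:Wprime-simply-transitive}, together with the fact that scalar operators act trivially by conjugation.
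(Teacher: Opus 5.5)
Your proof is correct and follows the paper's route. Injectivity comes from the surjectivity of $\Pi$. Surjectivity comes from showing that $\widetilde k$ is constant on the fibres of $\Pi$, using the transitivity in Lemma \ref{lem:Wprime-simply-transitive} and the fact that each $I_{w',\chi}$ with $w'\in W'_\chi$ is a scalar.

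You also fill in two points the paper leaves implicit in ``therefore lies in the image of $\Pi^*$''. First, $\Pi$ is a closed quotient map, since $\widetilde X$ is compact and $X$ is Hausdorff, so the descended function is continuous. Second, that function is $W$-invariant, by the equivariance of $\Pi^*$ and the surjectivity of $\Pi$.
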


\begin{proof}
Since $\Pi$ is surjective, $\Pi^*$ is injective. To see that $\Pi^*$ is surjective on the $W$-invariants, fix a function $\widetilde k\in C(\widetilde X, \Compact(H))^W$, and let $\widetilde{\chi}_1,\widetilde{\chi}_2\in \widetilde{X}$ lie in the same fibre $\Pi^{-1}(\chi)$ of $\Pi$. Lemma \ref{lem:Wprime-simply-transitive} implies that $\widetilde{\chi}_2 = w\widetilde{\chi}_1$ for some $w\in W'_\chi$, and we then have
\[
\widetilde{k}(\widetilde{\chi}_2) =  I_{w,\chi} \widetilde{k}(w^{-1}\widetilde{\chi_2}) I_{w, \chi}^* = \widetilde{k}(\widetilde{\chi}_1),
\]
where the first equality holds because $\Pi(w^{-1}\widetilde{\chi_2})=\Pi(\widetilde{\chi}_1)=\chi$, and the second equality holds because $I_{w,\chi}$ is a scalar. Thus the function $\widetilde{k}$ is constant on the fibres of the map $\Pi$, and therefore  lies in the image of $\Pi^*$.
\end{proof}

\begin{theorem}\label{thm:Plancherel-2}
$C^*_r(G)_{(M,\sigma)} \cong C(\widetilde{X},\Compact(H))^W \sim C(\widetilde{X})\rtimes_\gamma W$.
\end{theorem}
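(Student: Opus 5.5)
The first isomorphism is immediate: Theorem \ref{thm:Plancherel}(1) identifies $C^*_r(G)_{(M,\sigma)}$ with $C(X,\Compact(H))^W$, and Lemma \ref{lem:Pi-Cstar-iso} says that $\Pi^*$ maps this isomorphically onto $C(\widetilde{X},\Compact(H))^W$. For the Morita equivalence I will apply Corollary \ref{cor:E-equivalence-easy}, or equivalently Theorem \ref{thm:general-Morita} with all groups $W'$ trivial, to a twisted equivariant Hilbert module over the blow-up.

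\textbf{Setting up the module.} Take $\widetilde{E}\coloneqq C(\widetilde{X},H)$ as a Hilbert $C(\widetilde{X})$-module. Define
\[
(\widetilde{U}_w\xi)(\widetilde\chi)\coloneqq I_{w,w^{-1}\chi}\,\xi(w^{-1}\widetilde\chi),\qquad \chi=\Pi(\widetilde\chi).
\]
The cocycle $\gamma$ is constant, so it gives a twisted action of $W$ on $C(\widetilde{X})$ together with $\widetilde{\alpha}$. Each $\widetilde{U}_w\xi$ is continuous because $\chi\mapsto I_{w,\chi}$ is strong-operator continuous (Theorem \ref{thm:pI-properties}(3)) and $\Pi$ is continuous. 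The three axioms of Definition \ref{def:twisted-action-E} follow exactly as in the lemma for $U_w$ on $C(X,H)$, using Theorem \ref{thm:pI-properties}(2). We have $\Compact_{C(\widetilde X)}(\widetilde{E})=C(\widetilde X,\Compact(H))$, and the induced $W$-action on it is the one defined before Lemma \ref{lem:Pi-Cstar-iso}. Equivalently, $\widetilde{E}$ is the pullback of the module $E$ along $\Pi$.

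\textbf{Local data.} Fix $\widetilde\chi\in\widetilde X$ with $\chi=\Pi(\widetilde\chi)$. The fibre is $\widetilde{E}_{\widetilde\chi}=H$. Since $W_{\widetilde\chi}\subseteq W_\chi$, the local operators are $I_{w,\widetilde\chi}=I_{w,\chi}$ for $w\in W_{\widetilde\chi}$. By Lemma \ref{lem:Wprime-simply-transitive}, $W'_\chi\cap W_{\widetilde\chi}=1$, so no nontrivial element of $W_{\widetilde\chi}$ acts by a scalar; the blown-up $W'_{\widetilde\chi}$ is therefore trivial. By Corollary \ref{cor:E-equivalence-easy}, it remains to show that every irreducible $\gamma$-representation $\tau$ of $W_{\widetilde\chi}$ occurs in the contragredient of $w\mapsto I_{w,\chi}$ restricted to $W_{\widetilde\chi}$.

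\textbf{Completeness (main step).} Use the splitting $W_\chi=W'_\chi\rtimes W_{\widetilde\chi}$ from Lemma \ref{lem:W_chi-splitting}. Given $\tau$, try to extend it to a $\gamma$-representation $\pi$ of $W_\chi$ lying over $\overline{\i_\chi}$. The natural candidate is $\pi(w'r)\coloneqq c(w',r)\,\overline{\i_{w',\chi}}\,\tau(r)$ for a suitable scalar function $c$. A cleaner route is to induce: set $\pi\coloneqq\Ind_{W_{\widetilde\chi}}^{W_\chi}\tau$ (twisted induction) and take its component in the central summand $C^*_\gamma(W_\chi)P_{\i,\chi}$. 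Since $W'_\chi$ is a complement to $W_{\widetilde\chi}$, Mackey theory shows that the $P_{\i,\chi}$-part of the induced representation restricts back to $W_{\widetilde\chi}$ as exactly $\tau$. Concretely, $C^*_\gamma(W_\chi)P_{\i,\chi}\cong C^*_\gamma(W_{\widetilde\chi})$ via $r\mapsto rP_{\i,\chi}$, because $W'_\chi$ acts through the fixed projective character $\i_\chi$ and the cocycles match by \eqref{eq:i-conjugation}. This isomorphism is the step I expect to need the most care.

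\textbf{Conclusion.} Let $\pi$ be the resulting representation, irreducible and lying over $\overline{\i_\chi}$. By Theorem \ref{thm:E-completeness}, $\pi$ occurs in the contragredient of $I_\chi$, so its restriction $\tau$ occurs in the contragredient of $I_\chi|_{W_{\widetilde\chi}}$. Hence $\supp_{\widetilde E,\widetilde\chi}$ is all of $\widehat{W_{\widetilde\chi}}^{\gamma}$. Corollary \ref{cor:E-equivalence-easy} now shows that $\widetilde E$ implements a Morita equivalence between $C(\widetilde X,\Compact(H))^W$ and $C(\widetilde X)\rtimes_\gamma W$.
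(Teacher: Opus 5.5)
Your proposal is correct and follows the paper's proof. It uses the same module $C(\widetilde X,H)$ with the pulled-back intertwiners, gets $W'_{\widetilde\chi}=1$ from Lemma \ref{lem:Wprime-simply-transitive}, and deduces completeness from Theorem \ref{thm:E-completeness} via the splitting $W_\chi=W'_\chi\rtimes W_{\widetilde\chi}$; your identification $C^*_\gamma(W_\chi)P_{\i,\chi}\cong C^*_\gamma(W_{\widetilde\chi})$, $r\mapsto rP_{\i,\chi}$, spells out the step the paper leaves implicit, and it does hold, needing only the centrality of $P_{\i,\chi}$ (Lemma \ref{lem:Wprime-normal}) and the fact that the cosets $rW'_\chi$, $r\in W_{\widetilde\chi}$, are distinct.
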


\begin{proof}
The isomorphism holds by Lemma \ref{lem:Pi-Cstar-iso} and Theorem \ref{thm:Plancherel}. The Morita equivalence follows from Corollary \ref{cor:E-equivalence-easy}, applied to the Hilbert module $\widetilde{E}=C(\widetilde{X},H)$ and the family of operators
\[
(U_w\widetilde{\xi})(\widetilde\chi) \coloneqq I_{w, \Pi(w^{-1}\widetilde\chi)} \widetilde{\xi}(w^{-1}\widetilde\chi).
\]
Indeed, Lemma \ref{lem:Wprime-simply-transitive} implies that $W'_{\widetilde\chi}=1$ for every $\widetilde\chi\in\widetilde{X}$, and so $C(\widetilde{X},\widetilde{E},W)=C(\widetilde{X})\rtimes_\gamma W$. The completeness condition for the module $\widetilde{E}$ follows from the  completeness condition for the $C(X)$-module $E=C(X,H)$ (Theorem \ref{thm:E-completeness}), together with the decomposition $W_\chi = W'_\chi\rtimes W_{\widetilde{\chi}}$ (where $\chi=\Pi(\widetilde\chi)$) of Lemma \ref{lem:W_chi-splitting}.
\end{proof}

Since a Morita equivalence between two $C^*$-algebras induces a homeomorphism between their spectra, and since the spectrum of the twisted crossed product $C(\widetilde{X})\rtimes_\gamma W$ can be identified (cf.~Lemma \ref{lem:Mackey-machine}) with the twisted extended quotient $(\widetilde{X}/\!\!/ W)_\gamma$ (cf.~\cite[Section 2.1]{ABPS}), we obtain from Theorem \ref{thm:Plancherel-2} the following description of the tempered dual of $G$ as a topological space:

\begin{corollary}\label{cor:extended-quotient}
The component $\widehat{C^*_r(G)_{(M,\sigma)}}$ of the tempered dual of $G$ is homeomorphic to the twisted extended quotient $\left(\widetilde{X}/\!\!/W\right)_\gamma$, with the Jacobson topology.\hfill
\end{corollary}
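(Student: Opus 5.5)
The homeomorphism comes by composing three identifications of spectra, each already available in the paper. First, Theorem \ref{thm:Plancherel-2} gives a $*$-isomorphism $C^*_r(G)_{(M,\sigma)}\cong C(\widetilde{X},\Compact(H))^W$. It also gives a Morita equivalence between the latter algebra and $C(\widetilde{X})\rtimes_\gamma W$, implemented by the Hilbert module $\widetilde{E}=C(\widetilde{X},H)$. A $*$-isomorphism induces a homeomorphism of spectra. So does a Morita equivalence: by the Rieffel correspondence, the imprimitivity bimodule gives a lattice isomorphism of ideals, hence a homeomorphism of primitive ideal spaces. Both algebras are type I (the first is CCR, being a subalgebra of $C(\widetilde X,\Compact(H))$ whose irreducible representations land in compacts; the second is a twisted crossed product of a commutative algebra by a finite group). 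Hence the primitive ideal spaces coincide with the spectra, with the Jacobson topology. We therefore get a homeomorphism $\widehat{C^*_r(G)_{(M,\sigma)}}\cong \widehat{C(\widetilde{X})\rtimes_\gamma W}$.

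Second, we identify $\widehat{C(\widetilde{X})\rtimes_\gamma W}$ as a set with the twisted extended quotient. By Lemma \ref{lem:Mackey-machine}(a), every irreducible representation has the form $\rho_{\widetilde\chi,\pi}$ with $\widetilde\chi\in\widetilde X$ and $\pi\in\widehat{W_{\widetilde\chi}}^{\gamma_{\widetilde\chi}}$. By part (b), $\rho_{\widetilde\chi,\pi}\cong\rho_{\widetilde\chi',\pi'}$ exactly when $(\widetilde\chi',\pi')=(w\widetilde\chi,{}^w\pi)$ for some $w\in W$. In \cite[Section 2.1]{ABPS}, $(\widetilde X/\!\!/W)_\gamma$ is the quotient of $\{(\widetilde\chi,\pi)\}$ by precisely this $W$-action. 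Here $\gamma$ is constant, so $\gamma_{\widetilde\chi}=\gamma|_{W_{\widetilde\chi}}$, and ${}^w\pi$ is conjugation twisted by the cocycle factors of Lemma \ref{lem:Mackey-machine}(b). The map $[\widetilde\chi,\pi]\mapsto[\rho_{\widetilde\chi,\pi}]$ is therefore a well-defined bijection.

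Third, we handle the topology. The statement says ``with the Jacobson topology,'' so $(\widetilde X/\!\!/W)_\gamma$ is to carry the topology transported from $\widehat{C(\widetilde X)\rtimes_\gamma W}$ via the bijection above. In that reading the corollary is immediate, and the composite of the first two steps is the desired homeomorphism. The one point to check is that this matches ABPS's convention, namely that the extended quotient is viewed as the spectrum of the crossed product. If one instead wanted an intrinsic description, the main obstacle would be describing closures of sets of pairs $(\widetilde\chi,\pi)$. I would approach this by localising over $W$-invariant closed subsets as in Lemma \ref{lem:C-localisation}, combined with Proposition \ref{prop:CXEW-support}. That pair identifies ideals with closed sets of pairs that are unions of fibres over closed subsets of $\widetilde X/W$, refined by which $\pi$ occur. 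I would not pursue this, since the statement only asks for the Jacobson topology.
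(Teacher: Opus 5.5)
Your proposal is correct and follows essentially the paper's argument. The paper combines the Morita equivalence of Theorem \ref{thm:Plancherel-2}, the fact that a Morita equivalence induces a homeomorphism of spectra, and Lemma \ref{lem:Mackey-machine} identifying the spectrum of $C(\widetilde{X})\rtimes_\gamma W$ with $(\widetilde{X}/\!\!/W)_\gamma$, which is understood to carry the transported Jacobson topology; your detour through type I and primitive ideal spaces is harmless but unnecessary, since Rieffel induction already gives a homeomorphism of spectra directly.
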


\begin{remark}
A convenient description of the Jacobson topology on an extended quotient with trivial twisting can be found in \cite[Section 4]{Echterhoff-Emerson}. The extension to the twisted case is straightforward.
\end{remark}

\begin{remark}\label{rem:Wassermann-proof2}
The blow-up construction can also be applied in the setting of real reductive groups, giving another route to Wassermann's Morita equivalence \eqref{eq:Wassermann}. Let $G$ be a real reductive group, and let $P$, $M$, $A$, $\sigma$, $\germ{a}^*$, $W$, $W'$, $R$, etc., be as in Example \ref{example:Wassermann}. The set $(\germ{a}^*)'=\{\chi\in \germ{a}^*\ |\ W'_\chi\neq 1\}$ is a union of finitely many hyperplanes in the vector space $\germ{a}^*$, and we let $\widetilde{\germ{a}^*}$ be the disjoint union of the closed convex cones cut out by these hyperplanes. (That is, $\widetilde{\germ{a}^*}$ is the disjoint union of the closures of the connected components of $\germ{a}^*\setminus (\germ{a}^*)'$.) The Weyl group $W'$ permutes these cones simply transitively, and so we have
\begin{multline*}
C_0(\germ{a}^*,\Compact(H))^W \cong C_0(\widetilde{\germ{a}^*},\Compact(H))^W \sim C_0(\widetilde{\germ{a}^*})\rtimes W  \cong \left( C_0(\widetilde{\germ{a}^*})\rtimes W'\right)\rtimes R \\
\sim C_0(\widetilde{\germ{a}^*}/W')\rtimes R \cong C_0(\germ{a}^*/W')\rtimes R.
\end{multline*}

\end{remark}

\section{\texorpdfstring{$K$-theory of $C^*_r(G)$}{K-theory of C*r(G)}}\label{sec:K-theory}

Let $G$ be a reductive $p$-adic group, as in Section \ref{sec:p-adic}. Theorem \ref{thm:Plancherel} gives an isomorphism of $C^*$-algebras $C^*_r(G)\cong \bigoplus_{[M,\sigma]} C^*_r(G)_{(M,\sigma)}$, and hence an isomorphism in  $K$-theory $K_*(C^*_r(G))\cong \bigoplus_{[M,\sigma]} K_*(C^*_r(G)_{(M,\sigma)})$. The Morita equivalences obtained in Theorems \ref{thm:p-Morita} and \ref{thm:Plancherel-2} yield isomorphisms in $K$-theory
\[
K_*\left(C^*_r(G)_{(M,\sigma)}\right)\cong K_*\left( C(X,E,W)\right) \cong K_*(C(\widetilde{X})\rtimes_\gamma W).
\]
In this section we describe a method for computing these $K$-theory groups. 

One can compute $K_*(C(\widetilde{X})\rtimes_\gamma W)$ by deploying general machinery from equivariant cohomology \cite{Bredon-Equivariant,Luck-Chern}. A priori this general machinery is not applicable to $K_*(C(X,E,W))$---which is not, in any obvious way, the value at $X$ of an equivariant cohomology theory---but the general techniques can be adapted to this case, thanks to the isomorphism $K_*(C(X,E,W))\cong K_*(C(\widetilde{X})\rtimes_\gamma W)$. 

Throughout this section we fix a Levi subgroup $M$ of $G$, and a discrete-series representation $\sigma$ of $M$. As in Section \ref{sec:p-adic} we let $X$ denote the compact torus of unramified unitary characters of $M$; we let $W=(X\rtimes W_M)_\sigma$ be the finite group from Definition \ref{def:W}; we let $\gamma:W\times W\to \T$ be the cocycle defined in Theorem \ref{thm:pI-properties}; and we let $\widetilde{X}$ denote the blow-up of $X$ defined in Definition \ref{def:widetildeX}.

\subsection{\texorpdfstring{Equivariant CW-structures on $X$ and $\widetilde{X}$}{Equivariant CW-structures on X and X˜}}

Since the torus $X$ is a smooth compact manifold, on which the finite group $W$ acts smoothly, $X$ carries a finite $W$-invariant CW-structure (indeed, $X$ has a $W$-invariant simplicial structure, by \cite{Illman}.) Let us briefly recall what this entails---referring to \cite[I.1]{Bredon-Equivariant} for further details---and establish some notation.

For each $j\geq 0$ we have a finite set of indices $Z_j$, whose elements are called \emph{$j$-cells}. We write $Z\coloneqq \bigcup_{j\geq 0}Z_j$. To each $z\in Z_j$ there corresponds a subset $e_z\subset X$, called the \emph{open $j$-cell} corresponding to the index $z$. Distinct open cells are disjoint from one another.

We denote by $X_j\subseteq X$ the union of the open $k$-cells for all $k\leq j$. We have $X=X_n$, where $n$ is the dimension of $X$ as a smooth manifold. For each $z\in Z_j$ we have, as part of the given structure, a continuous map $\theta_z : D^j \to \overline{e_z}$ that restricts to a homeomorphism $B^j\to e_z$, and that maps the boundary $S^{j-1}$ of $D^j$ into $X_{j-1}$. (Here, for $j\geq 1$, $D^j=\{x\in \R^j\ |\ \|x\|\leq 1\}$ is the closed $j$-dimensional ball, $S^{j-1}$ is its boundary sphere, and $B^j$ is its interior; while for $j=0$ both $D^0$ and $B^0$ denote the set $\{0\}$, and $S^{-1}=\emptyset$.) 

Though it is not strictly necessary, it will simplify matters to assume (as we certainly may, and henceforth do) that the closure of each open cell in $X$ is  a union of open cells. Thus for each $z\in Z_j$ there is a set of cells $\partial z\subset\bigcup_{k<j} Z_k$ such that $\overline{e_z}\setminus e_z = \bigsqcup_{{y}\in \partial z} e_{{y}}$. 

Our assumption that this CW-structure is $W$-equivariant means that for each open $j$-cell $e_z\subset X$ and each $w\in {W}$ the set $we_z$ is another open $j$-cell; thus ${W}$ acts on $Z_j$ for each $j$. Moreover, the attaching maps $\theta_z$ are assumed to satisfy $\theta_{wz}=w\theta_z$ for all $w\in {W}$. In particular, if $wz=z$ then $w$ fixes $e_z$ pointwise. 

Let $z\in Z_{j+1}$ and ${y}\in Z_{j}$ be a $(j+1)$-cell and a $j$-cell, respectively. The integer $[{y}:z]$ is defined to be the degree of the following map on $S^j$:
\begin{equation}\label{eq:degree}
[{y}:z]\coloneqq \deg\left(S^j\xrightarrow{\theta_z} X_{j} \xrightarrow{\text{quotient}} X_{j}/(X_{j}\setminus e_{{y}}) \xrightarrow{\theta_{{y}}^{-1}} D^j/S^{j-1} \xrightarrow{\cong} S^j\right).
\end{equation}
(Here $\theta_{{y}}^{-1}$ denotes the inverse of the homeomorphism $D^j/S^{j-1}\to \overline{e_{{y}}}/(\overline{e_{{y}}}\setminus e_{{y}})$ induced by the attaching map $\theta_{{y}}:D^j\to \overline{e_{{y}}}$.)

We now return to the specifics of the case under consideration, where $X$ is the torus of unramified characters of a Levi subgroup of $G$, etc. The homotopy argument of Knapp-Stein (Lemma \ref{lem:homotopy}) shows that each $W$-CW-structure on $X$ is compatible with the system of subgroups $W'_\chi$ and characters $\i_{w,\chi}$, in the following sense:

\begin{lemma}\label{lem:p-CW-properties}
Let $\chi\in X$ lie in the open cell $e_z$. For all $\phi\in \overline{e_z}$ we have:
\begin{enumerate}[\rm(1)]
\item ${W}_\chi \subseteq {W}_{\phi}$,
\item ${W}'_\chi = {W}_\chi \cap {W}'_\phi$, and
\item $\i_{w,\chi}=\i_{w,\phi}$ for all $w\in {W}'_\chi$. 
\end{enumerate}
\end{lemma}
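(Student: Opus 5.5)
Since $\chi$ and $\phi$ lie in the closure of a single cell, the plan is to connect them by a path through that closed cell and transport information along it. The $W$-CW axioms give the fixed-point data, and the Knapp--Stein homotopy argument (Lemma \ref{lem:homotopy}) gives the scalar-intertwiner data. Write $\chi=\theta_z(a)$ with $a\in B^j$ (where $j$ is the dimension of $e_z$), and pick $b\in D^j$ with $\theta_z(b)=\phi$; this is possible because $\theta_z(D^j)=\overline{e_z}$, by compactness of $D^j$. The path will be $\chi_t\coloneqq\theta_z((1-t)a+tb)$ for $t\in[0,1]$. It is continuous, runs from $\chi$ to $\phi$, lies in $\overline{e_z}$, and $\chi_t\in e_z$ for all $t<1$ by convexity of the ball.

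For (1), let $w\in W_\chi$. Then $we_z\cap e_z\ni\chi$, and since distinct open cells are disjoint, $we_z=e_z$, that is, $wz=z$. The equivariance assumption $\theta_{wz}=w\theta_z$ then gives $w\theta_z=\theta_z$, so $w$ fixes $\theta_z(D^j)=\overline{e_z}$ pointwise. In particular $w\phi=\phi$. This also shows that every $w\in W_\chi$ fixes every point $\chi_t$ of the path.

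For (2) and (3), take $w\in W'_\chi$. By the previous paragraph, $w\in W_{\chi_t}$ for all $t$, and $w\in W'_{\chi_0}$. Lemma \ref{lem:homotopy} therefore gives $w\in W'_{\chi_1}=W'_\phi$ and $\i_{w,\phi}=\i_{w,\chi}$. This proves (3), and also proves the inclusion $W'_\chi\subseteq W_\chi\cap W'_\phi$.

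The reverse inclusion in (2) is the one step where a little care is needed. Given $w\in W_\chi\cap W'_\phi$, I would run the same argument with the path reversed, $t\mapsto\chi_{1-t}$. The element $w$ fixes every point of the path, by (1) applied to $w\in W_\chi$. It also lies in $W'$ at the starting point $\phi$. Lemma \ref{lem:homotopy} then yields $w\in W'_\chi$. The only subtlety is that the hypothesis of Lemma \ref{lem:homotopy} requires $w$ to fix the entire path and not just its endpoints. This is why the path is chosen inside $\overline{e_z}$, where (1) supplies pointwise fixing by all of $W_\chi$.
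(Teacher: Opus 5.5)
Your proof is correct and follows the paper's argument. Part (1) comes from the equivariance of the cell structure, since $we_z=e_z$ forces $w$ to fix $\overline{e_z}$ pointwise, and parts (2)--(3) come from applying Lemma \ref{lem:homotopy} in both directions along a path in $\overline{e_z}$ joining $\chi$ and $\phi$; your explicit path $\theta_z((1-t)a+tb)$ and the reversed path for the inclusion $W_\chi\cap W'_\phi\subseteq W'_\chi$ spell out what the paper leaves implicit.
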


\begin{proof}
For part (1) just note that  if $w\chi=\chi$ then $we_z=e_z$, so $w$ fixes $e_z$ pointwise, and hence also fixes $\overline{e_z}$ pointwise.

For parts (2) and (3), note that since $\chi\in e_z$ and $\phi\in \overline{e_z}$ there is a continuous path $\chi_t$ ($t\in [0,1]$) in $\overline{e_z}$ with $\chi_0=\chi$ and $\chi_1=\phi$. If $w\in W_\chi$ then $w$ fixes each $\chi_t$, and so Lemma \ref{lem:homotopy} implies that $w\in W'_\chi$ if and only if $w\in W'_\phi$; and that if we do have $w\in W'_\chi$, then $\i_{w,\chi}=\i_{w,\phi}$.
\end{proof}

The equalities in Lemma \ref{lem:p-CW-properties} apply, in particular, for all  $\phi\in e_z$, and so the notation ${W}_z\coloneqq {W}_\chi$, ${W}'_z\coloneqq {W}'_\chi$, and $\i_{w,z}\coloneqq \i_{w,\chi}$ is well-defined. 

We now turn to $\widetilde{X}$ (Definition \ref{def:widetildeX}.) Our chosen $W$-CW-structure on $X$ lifts canonically to a $\Lambda$-invariant CW-structure on the covering space $\germ{a}_M^*$, and each connected component of $\widetilde{\germ{a}_M^*}$ is a closed subcomplex of $\germ{a}_M^*$ (because Lemma \ref{lem:p-CW-properties} implies that $X'$ is a subcomplex of $X$.) Taking the quotient by $\Lambda$ yields a finite $W$-CW-structure on $\widetilde{X}$ for which the gluing map $\Pi:\widetilde{X}\to X$ sends open $j$-cells to open $j$-cells, for each $j$. 

For each $j\geq 0$ we denote the set of $j$-cells in $\widetilde{X}$ by $\widetilde{Z}_j$. The gluing map $\Pi$ induces a $W$-equivariant map $\Pi_j:\widetilde{Z}_j\to Z_j$. The same argument as in Lemma \ref{lem:blow-up-quotient} shows:

\begin{lemma}\label{lem:Pi-bijection-on-orbits}
The map $\Pi_j:\widetilde{Z}_j\to Z_j$ induces a bijection  $W\backslash \widetilde{Z}_j \xrightarrow{\cong} W\backslash Z_j$.\hfill\qed
\end{lemma}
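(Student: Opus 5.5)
We show that $\Pi_j$ is surjective and that each of its fibres lies inside a single $W$-orbit. These two facts give exactly that the induced map $W\backslash\widetilde{Z}_j\to W\backslash Z_j$ is well defined and bijective. Well-definedness and surjectivity on orbits are immediate from $W$-equivariance of $\Pi_j$ together with surjectivity of $\Pi_j$ itself. Injectivity on orbits reduces to the fibre statement: if $\Pi_j(\widetilde{z}_1)=w\,\Pi_j(\widetilde{z}_2)=\Pi_j(w\widetilde{z}_2)$, then $\widetilde{z}_1$ and $w\widetilde{z}_2$ lie in a common fibre, hence in a common $W$-orbit.

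For surjectivity, take $z\in Z_j$ and a point $\chi\in e_z$. The map $\Pi:\widetilde{X}\to X$ is surjective, so choose $\widetilde\chi\in\Pi^{-1}(\chi)$. By construction of the CW-structure on $\widetilde{X}$, $\widetilde\chi$ lies in some open cell $e_{\widetilde z}$, and $\Pi$ maps open $j$-cells onto open $j$-cells. Hence $\Pi(e_{\widetilde z})$ is the open cell containing $\chi$, namely $e_z$, and $\Pi_j(\widetilde z)=z$.

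For the fibre statement, let $\widetilde z_1,\widetilde z_2\in\widetilde Z_j$ with $\Pi_j(\widetilde z_1)=\Pi_j(\widetilde z_2)=z$. Fix $\chi\in e_z$. Since $\Pi$ restricts to a map of $e_{\widetilde z_i}$ onto $e_z$ (it is the identity on each lifted cell, so it is a homeomorphism onto its image), we can pick $\widetilde\chi_i\in e_{\widetilde z_i}$ with $\Pi(\widetilde\chi_i)=\chi$. By Lemma \ref{lem:Wprime-simply-transitive} there is $w\in W'_\chi$ with $w\widetilde\chi_1=\widetilde\chi_2$. Because the $W$-action on $\widetilde X$ is cellular, $w$ carries the open cell containing $\widetilde\chi_1$ to the open cell containing $\widetilde\chi_2$, i.e.\ $w\widetilde z_1=\widetilde z_2$.

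The one point needing care is this last step: we must know that $W$ permutes the cells of $\widetilde X$. This holds because the CW-structure on $\widetilde X$ was obtained from the $W$-invariant CW-structure on $X$, lifted to $\germ{a}_M^*$ and passed through the isometries used to define $\widetilde\alpha$. Those isometries permute the lifted cells and the components of $\widetilde{\germ{a}_M^*}$, so the resulting structure on $\widetilde X$ is a $W$-CW-structure. Since this is exactly how the structure was built, no further argument is required.
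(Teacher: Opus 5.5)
Your proof is correct and follows the paper's route: the paper proves this lemma by ``the same argument as in Lemma \ref{lem:blow-up-quotient}'', namely that $\Pi_j$ is a $W$-equivariant surjection whose fibres lie in single $W$-orbits, by Lemma \ref{lem:Wprime-simply-transitive}. Your write-up adds the passage from points to cells, using that $\Pi$ maps each open cell of $\widetilde X$ onto an open cell of $X$ of the same dimension and that $W$ acts on $\widetilde X$ by permuting cells.
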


\subsection{Coefficient systems and equivariant cohomology}

We next recall some definitions from \cite{Bredon-Equivariant} related to equivariant cohomology. 

\begin{definition}
Let  $\mathcal{X}$ be a finite $W$-CW-complex.
\begin{enumerate}[\rm(1)]
\item For each cell $z$ of $\mathcal{X}$ we define $\mathcal{W}^\gamma_z\coloneqq \Rep_\gamma(W_z)$, the free abelian group of isomorphism classes of virtual finite-dimensional $\gamma$-representations of $W_z$. (That is, the free abelian group generated by the set $\widehat{W_z}^{\gamma}$.)
\item For ${y}\in \partial z$ we let
\[
\mathcal{W}^\gamma_{z,{y}} : \Rep_{\gamma}(W_{{y}}) \to \Rep_{\gamma}(W_z)
\]
be the map given by restriction of representations from $W_{{y}}$ to its subgroup $W_z$.
\item For $w\in W$ we let
\[
\mathcal{W}^\gamma_{w,z} :\Rep_{\gamma}(W_z)\to \Rep_{\gamma}(W_{wz})
\]
be the map $\pi\mapsto {}^w\pi$, where
\[
{}^w\pi(v)\coloneqq \pi(w^{-1}vw)  {\gamma}(w^{-1}v,w) {\gamma}(w^{-1},v).
\]
The fact that $\gamma(w,w^{-1})=1$ (cf.~\eqref{eq:gamma-inverse}) ensures that ${}^w\pi$ is a $\gamma$-representation of $W_{wz}$.
\end{enumerate}
\end{definition}

The groups $\mathcal{W}^\gamma_z$ and maps  $\mathcal{W}^\gamma_{z,{y}}$ and $\mathcal{W}^\gamma_{w,z}$ constitute a \emph{local coefficient system} in the sense of \cite[Section I.5]{Bredon-Equivariant}. That is to say, for all $z\in Z$:
\begin{itemize}
\item the maps $\mathcal{W}^\gamma_{z,z}$ and $\mathcal{W}^\gamma_{1_W,z}$ are the identity on $\mathcal{W}^\gamma_z$;
\item $\mathcal{W}^\gamma_{z,{y}'} = \mathcal{W}^\gamma_{z,{y}}\circ \mathcal{W}^\gamma_{{y},z}$ for all ${y}\in \partial z$ and ${y}'\in \partial {y}$; 
\item $\mathcal{W}^\gamma_{wz,w{y}}\circ \mathcal{W}^\gamma_{w,{y}} = \mathcal{W}^\gamma_{w,z}\circ \mathcal{W}^\gamma_{z,{y}}$ for all ${y}\in \partial z$ and $w\in W$; and
\item $\mathcal{W}^\gamma_{w_2,w_1z}\circ \mathcal{W}^\gamma_{w_1,z}=\mathcal{W}^\gamma_{w_2w_1,z}$ for all $w_1,w_2\in W$.
\end{itemize}

The coefficient system $\mathcal{W}^\gamma$ is in fact an example of a \emph{generic coefficient system} in the sense of \cite[Section I.4]{Bredon-Equivariant} (in other words, $\mathcal{W}^\gamma$ comes from the functor $W/H\mapsto \Rep_\gamma(H)$ on the orbit category of $W$.) The same is not true for the following coefficient system; here we specialise to $\mathcal{X}=X$, the torus of unitary unramified characters of $M$.

\begin{definition}
For each cell $z$ of $X$ we let $\Rep_{\gamma,\overline{\i_z}}(W_z)$ be the direct-summand of $\Rep_\gamma(W_z)$ generated by those irreducible $\gamma$-representations of $W_z$ that lie over (i.e., contain: cf.~Definition \ref{def:lie-over}) the character $\overline{\i_z}$ of $W'_z$.
\end{definition}

\begin{lemma}\label{lem:R-def}
The subspaces $\mathcal{R}_z\coloneqq \Rep_{\gamma,\overline{\i_z}}(W_z)$ of $\mathcal{W}^\gamma_z=\Rep_\gamma(W_z)$ are preserved by the maps $\mathcal{W}^\gamma_{z,{y}}$ and $\mathcal{W}^\gamma_{w,z}$. Denoting the restrictions of these maps to $\mathcal{R}_z$ by $\mathcal{R}_{z,{y}}$ and $\mathcal{R}_{w,z}$, respectively, we thus obtain a  local coefficient system $\mathcal{R}$ on $X$.
\end{lemma}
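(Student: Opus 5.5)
We must show two things: that restriction $\mathcal{W}^\gamma_{z,y}$ (for $y\in\partial z$) carries $\mathcal{R}_y$ into $\mathcal{R}_z$, and that conjugation $\mathcal{W}^\gamma_{w,z}$ carries $\mathcal{R}_z$ into $\mathcal{R}_{wz}$. Once this is done, the four axioms of a local coefficient system for $\mathcal{R}$ are inherited from those of $\mathcal{W}^\gamma$, since the maps of $\mathcal{R}$ are just restrictions of the maps of $\mathcal{W}^\gamma$ to subgroups. Because $\Rep_{\gamma,\overline{\i_z}}(W_z)$ is spanned by irreducibles lying over $\overline{\i_z}$, it suffices in both cases to check the statement on an irreducible $\pi$ lying over the relevant character.

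\emph{Restriction.} Let $y\in\partial z$, and pick $\chi\in e_z$ and $\phi\in e_y\subseteq\overline{e_z}$. Lemma \ref{lem:p-CW-properties} gives $W_z\subseteq W_y$ and $W'_z=W_z\cap W'_y$, together with $\i_{w,z}=\i_{w,y}$ for $w\in W'_z$. Now let $\pi\in\widehat{W_y}^{\gamma}$ lie over $\overline{\i_y}$, so that $\pi(w')=\overline{\i_{w',y}}\,\id$ for every $w'\in W'_y$. Restricting to $W_z$, every $w'\in W'_z\subseteq W'_y$ then acts by the scalar $\overline{\i_{w',y}}=\overline{\i_{w',z}}$. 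Hence each irreducible constituent of $\pi|_{W_z}$ lies over $\overline{\i_z}$, and therefore $\mathcal{W}^\gamma_{z,y}(\pi)\in\mathcal{R}_z$.

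\emph{Conjugation.} Lemma \ref{lem:Wprime-normal} gives $wW'_zw^{-1}=W'_{wz}$, together with the scalar transformation rule \eqref{eq:i-conjugation}. Take $v'=wvw^{-1}\in W'_{wz}$ with $v\in W'_z$. By definition,
\[
{}^w\pi(v')=\pi(v)\,\gamma(w^{-1}v',w)\,\gamma(w^{-1},v').
\]
If $\pi$ lies over $\overline{\i_z}$, then ${}^w\pi(v')=\overline{\i_{v,z}}\,\gamma(vw^{-1},w)\gamma(w^{-1},wvw^{-1})$. We need this to equal $\overline{\i_{v',wz}}$. Comparing with \eqref{eq:i-conjugation}, which involves $\gamma(wv,w^{-1})\gamma(w,v)\overline{\gamma(w,w^{-1})}$, the required identity is
\[
\gamma(vw^{-1},w)\,\gamma(w^{-1},wvw^{-1})\,\gamma(wv,w^{-1})\,\gamma(w,v)=1,
\]
using $\gamma(w,w^{-1})=1$ from \eqref{eq:gamma-inverse}. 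This is a cocycle identity for the constant cocycle $\gamma$, and it is the one step needing actual computation. I would derive it from the 2-cocycle relation \eqref{eq:twisted-2-cocycle} applied to triples such as $(w^{-1},wv,w^{-1})$ and $(w^{-1},w,v)$, together with the normalisation $\gamma(w^{-1},w)=1$. A cleaner alternative is to note that $\pi\mapsto{}^w\pi$ is the pushforward along the isomorphism $\Ad_w$ of \eqref{eq:Adw-x-wx}, which sends the central projection $P_{\i,z}$ to $P_{\i,wz}$; that fact is exactly what Lemma \ref{lem:Wprime-normal}'s computation encodes, up to the conjugate conventions. Either way, we conclude that ${}^w\pi$ lies over $\overline{\i_{wz}}$.

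The main obstacle is purely bookkeeping: matching the conjugation formula for ${}^w\pi$ against \eqref{eq:i-conjugation} with the correct complex conjugations (the $I_{w,\chi}$ form a $\overline{\gamma}$-representation, while $\pi$ is a $\gamma$-representation). The projection viewpoint avoids a sign-by-sign check.
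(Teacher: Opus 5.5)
Your proposal is correct and takes the same route as the paper: the restriction maps are handled by Lemma~\ref{lem:p-CW-properties} exactly as you do, and the conjugation maps by appeal to \eqref{eq:i-conjugation}, which the paper cites without writing out the check. The cocycle identity you isolate does hold and closes that step: apply \eqref{eq:twisted-2-cocycle} (with constant $\gamma$) to the triples $(w^{-1},wv,w^{-1})$, $(w^{-1},w,v)$ and $(v,w^{-1},w)$, and use $\gamma(w,w^{-1})=\gamma(w^{-1},w)=1$ from \eqref{eq:gamma-inverse}; the left-hand side then reduces to $1$.
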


\begin{proof}
Lemma \ref{lem:p-CW-properties} ensures that if ${y}\in \partial z$ then we have $W'_{z}\subseteq W'_{{y}}$, and $\i_{w,z}=\i_{w,{y}}$ for all $w\in W'_z$. Thus if $\pi$ is an irreducible representation of $W_{{y}}$ lying over $\overline{\i_{{y}}}$, then the restriction of $\pi$ to $W_z$ lies over $\overline{\i_z}$. This shows that $\mathcal{W}^\gamma_{z,{y}}$ maps $\mathcal{R}_{{y}}$ into $\mathcal{R}_z$. On the other hand, the formula \eqref{eq:i-conjugation} ensures that $\mathcal{W}^\gamma_{w,z}$ maps $\mathcal{R}_z$ into $\mathcal{R}_{wz}$.
\end{proof}

\begin{remark}\label{rem:R-groups}
If $\i_{w,z}=1$ for all cells $z$ and all $w\in W'_z$, and $\gamma(w_1,w_2)=1$ for all $w_1,w_2\in W$, then we have  obvious identifications $\mathcal{R}_z\cong \Rep(W_z/W'_z)$. In this picture, the structure map $\mathcal{R}_{z,{y}}$ is given by restriction of representations along the injective homomorphism $W_z/W'_z \into W_{{y}}/W'_{{y}}$ induced by the inclusion $W_z\subset W_{{y}}$; and the map $\mathcal{R}_{w,z}$ is given by pullback along the isomorphism $\Ad_{w^{-1}}:(W_{wz}/W'_{wz})\xrightarrow{\cong} W_z/W'_z$. This situation arises, for instance, in the minimal principal-series components of split Chevalley groups; this follows from work of Keys \cite{Keys}. 
\end{remark}

We next recall from \cite[Section I.6]{Bredon-Equivariant} the definition of the equivariant cohomology groups associated to our coefficient systems $\mathcal{W}^\gamma$ and $\mathcal{R}$:

\begin{definition}\label{def:Bredon-cohomology}
Let $\mathcal{X}$ be a $W$-CW-complex. For each $j\geq 0$ we consider the free abelian group
\[
C^j(\mathcal{X},\mathcal{W}^\gamma)\coloneqq \bigoplus_{z\in Z_j} \mathcal{W}^\gamma_z.
\]
The group ${W}$ acts on $C^j(\mathcal{X},\mathcal{W}^\gamma)$ through the isomorphisms $\mathcal{W}^\gamma_{w,z} : \mathcal{W}^\gamma_z\to \mathcal{W}^\gamma_{wz}$, and we let $C^j(X,\mathcal{W}^\gamma)^{{W}}$ denote the subgroup of invariants for this action. We define $\partial^j:C^j(\mathcal{X},\mathcal{W}^\gamma)\to C^{j+1}(X,\mathcal{W}^\gamma)$ to be the sum of the maps 
\[
\partial^j_{z,{y}}\coloneqq [{y}:z] \mathcal{W}^\gamma_{z,{y}} : \mathcal{W}^\gamma_{{y}} \to \mathcal{W}^\gamma_z,
\]
where $z\in Z_{j+1}$, ${y}\in \partial z\cap Z_j$, and $[{y}:z]$ is the integer defined in \eqref{eq:degree}. The map $\partial^j$ is ${W}$-equivariant, and so it restricts to a map $\partial^j:C^j(\mathcal{X},\mathcal{W}^\gamma)^{{W}}\to C^{j+1}(\mathcal{X},\mathcal{W}^\gamma)^{{W}}$. These maps give rise to a cochain complex
\[
C^0(\mathcal{X},\mathcal{W}^\gamma)^{{W}} \xrightarrow{\partial^0} C^1(\mathcal{X},\mathcal{W}^\gamma)^{{W}} \xrightarrow{\partial^1} \cdots \xrightarrow{\partial^d} C^n(\mathcal{X},\mathcal{W}^\gamma)^{{W}}
\]
whose cohomology we denote by $H^*_{{W}}(\mathcal{X},\mathcal{W}^\gamma)$. (This is the \emph{$\gamma$-twisted Bredon cohomology} of $\mathcal{X}$ studied in \cite{Dwyer}.)

Replacing $\mathcal{X}$ everywhere by the torus $X$, and replacing $\mathcal{W}^\gamma$ everywhere by $\mathcal{R}$, we get the definition of a cochain complex $C^*(X,\mathcal{R})^W$ and its cohomology $H^*_W(X,\mathcal{R})$.
\end{definition}

The coefficient system $\mathcal{W}^\gamma$  will emerge from our computation of $K_*(C^*_r(G)_{(M,\sigma)})$ in the following form. The functors 
\begin{equation*}\label{eq:H-definition}
\mathcal{H}^q:(S,T)\mapsto K_{-q}(C_0(S\setminus T)\rtimes_\gamma W),
\end{equation*}
from the category of nested pairs $T\subseteq S$ of $W$-CW complexes to the category of abelian groups, form an equivariant cohomology theory in the sense of \cite{Luck-Chern}. 
For each $q$ we get from $\mathcal{H}^q$ a coefficient system $\mathcal{H}^q:z\mapsto K_{-q}(C(Wz)\rtimes_\gamma W)$ on $\mathcal{X}$ (cf \cite[Sections I.4--5]{Bredon-Equivariant}). Since $C(Wz)\rtimes_\gamma W$ is finite-dimensional, this coefficient system is identically $0$ when $q$ is odd. As for the even $q$s, the imprimitivity theorem (see Example \ref{ex:imprimitivity}) implies that for each cell $z$ of $\mathcal{X}$ the map
\begin{equation}\label{eq:Phi_z}
\Psi_{z}:K_0(C^*_\gamma(W_z)) \xrightarrow{H\mapsto (C(Wz)\rtimes_\gamma W)\delta_z\otimes_{C^*_\gamma(W_z)} H} K_0(C(Wz)\rtimes_\gamma W)
\end{equation}
is an isomorphism, with inverse
\[
\Psi_z^{-1}:K_0(C(Wz)\rtimes_\gamma W) \xrightarrow{H\mapsto \delta_z H} K_0(C^*_\gamma(W_z)),
\]
and so we get an isomorphism
\begin{equation}\label{eq:coefficient-isomorphism}
\mathcal{W}^\gamma_z = \Rep_{\gamma}(W_z) \xrightarrow{\cong} K_0(C^*_\gamma(W_z)) \xrightarrow{\Psi_z} K_0(C(Wz)\rtimes_\gamma W) \xrightarrow{\text{Bott}}\mathcal{H}^q_z.
\end{equation}
Straightforward computations verify that these isomorphisms are compatible with the restriction maps and with the $W$-action.

\subsection{\texorpdfstring{A spectral sequence for $K_*(C(\widetilde{X})\rtimes_\gamma W)$}{A spectral sequence for K*(C(X˜)⋊gamma W)}}

We continue to let $\mathcal{X}$ denote a finite $W$-CW-complex of dimension $n$. Standard machinery (explained in \cite[Chapter IV]{Bredon-Equivariant}, for instance) applied to the equivariant cohomology theory $\mathcal{H}$ produces an Atiyah-Hirzebruch-style spectral sequence converging to $K_*(C(\mathcal X)\rtimes_\gamma W)$. Since we shall  apply these arguments to the non-standard case of $K_*(C(X,E,W))$ below, we briefly recall the outline of the construction from the point of view of $C^*$-algebras, following \cite{Schochet-I}.

Consider the filtration
\[
\emptyset=\mathcal{X}_{-1}\subseteq \mathcal{X}_0 \subseteq \cdots \subseteq \mathcal{X}_n = \mathcal{X}
\]
by its $j$-skeleta. For each $p\in \{0,\ldots,n+1\}$. consider the $W$-invariant open set $\mathcal{Y}_p\coloneqq \mathcal{X}\setminus \mathcal{X}_{n-p}\subseteq \mathcal{X}$. Setting $\mathcal{C}_p\coloneqq C_0(\mathcal{Y}_p)\rtimes_\gamma W$, we have an increasing sequence of ideals
\begin{equation}\label{eq:mathcalC-filtration}
0 = \mathcal{C}_0 \subset \mathcal{C}_1 \subset \cdots \subset \mathcal{C}_{n+1}=\mathcal{C}
\end{equation}
in the $C^*$-algebra $\mathcal{C}$.

For each $p$ we have an extension of $C^*$-algebras $\mathcal{C}_{p-1}\into \mathcal{C}_p\onto \mathcal{C}_p/\mathcal{C}_{p-1}$, giving maps in $K$-theory $K_*(\mathcal{C}_{p-1})\to K_*(\mathcal{C}_p)$, $K_*(\mathcal{C}_p)\to K_*(\mathcal{C}_p/\mathcal{C}_{p-1})$, and $K_*(\mathcal{C}_p/\mathcal{C}_{p-1})\to K_{*-1}(\mathcal{C}_{p-1})$. As explained in \cite[Section 6]{Schochet-I} these maps form an `exact couple', and thus give rise to a spectral sequence $(E^r(\mathcal{C}),d^r)$, with $d^r: E^r_{p,q}(\mathcal{C}) \to  E^r_{p-r,q+r-1}(\mathcal{C})$, that converges to $K_*(\mathcal{C})$ and has 
\[
E^1_{p,q}(\mathcal{C}) = K_{p+q}( \mathcal{C}_p/\mathcal{C}_{p-1}).
\]

The exactness of the crossed-product construction implies that for each $p$ we have a canonical isomorphism
\[
\mathcal{C}_p/\mathcal{C}_{p-1} \cong C_0(\mathcal{Y}_p\setminus \mathcal{Y}_{p-1})\rtimes_\gamma W=C_0(\mathcal{X}_{n-p+1}\setminus \mathcal{X}_{n-p})\rtimes_\gamma W.
\]
To save space let us define
\[
j\coloneqq n-p+1.
\]
We then have $E^1_{p,q}(\mathcal{C}) \cong \mathcal{H}^{-p-q}(X_j,X_{j-1})$. These isomorphisms (and a change of indices) identify the spectral sequence $E(\mathcal{C})$ with the equivariant Atiyah-Hirzebruch spectral sequence associated in \cite[Section IV.4]{Bredon-Equivariant} to the equivariant cohomology theory $\mathcal{H}$. (The apparent difference in the $d^1$ differentials  between Schochet's and Bredon's spectral sequences is reconciled by considering the commuting diagram
\[
\xymatrix@C=10pt{
0 \ar[r] & C_0(\mathcal X\setminus \mathcal X_j)\rtimes_\gamma W \ar[r]\ar[d] & C_0(\mathcal X\setminus \mathcal X_{j-1})\rtimes_\gamma W \ar[r]\ar[d] & C_0(\mathcal X_j\setminus \mathcal X_{j-1})\rtimes_\gamma W \ar[r] \ar[d]& 0 \\
0  \ar[r] & C_0(\mathcal X_{j+1}\setminus \mathcal X_j)\rtimes_\gamma W \ar[r] & C(\mathcal X_{j+1})\rtimes_\gamma W \ar[r]& C(\mathcal X_j)\rtimes_\gamma W \ar[r]& 0
}
\]
in which each arrow is given by the obvious combination of restriction and extension-by-zero of functions. The corresponding commuting diagram of $K$-theory long exact sequences establishes the equality between the two spectral sequences.)

The computation in \cite[pp. IV.6--10]{Bredon-Equivariant} shows that chain complex $E^1_{p,q}(\mathcal{C}) \xrightarrow{d^1} E^1_{p-1, q}(\mathcal{C})$ is isomorphic to complex
\[
C^j(\mathcal X,{\mathcal{H}}^{-q-n-1})^W \xrightarrow{\partial} C^{j+1}(\mathcal X,{\mathcal{H}}^{-q-n-1})^W
\]
computing the Bredon cohomology of $\mathcal X$ with coefficients in ${\mathcal{H}}^{-q-n-1}$. Explicitly, the isomorphism is given by noting that since $W$ acts trivially on $B^j$, the canonical isomorphism
\[
C_0(B^{j}\times Z_j)\cong C_0(B^{j})\otimes C(Z_j)
\]
induces an isomorphism 
\[
C_0(B^{j}\times Z_j)\rtimes_\gamma W \cong 
C_0(B^{j})\otimes \left(C(Z_j)\rtimes_\gamma W\right),
\] 
giving by suspension
\begin{equation}\label{eq:E1widetildeC-iso}
\begin{aligned}
& K_{p+q}(C_0(B^j\times Z_j)\rtimes_\gamma W)  \cong
K_{q+n+1}(C(Z_j)\rtimes_\gamma W)\\
&\cong \bigoplus_{Wz\in W\backslash Z_j} K_{q+n+1}(C(Wz)\rtimes_\gamma W) 
\cong C^j(X,{\mathcal{H}}^{-q-n-1})^W.
\end{aligned}
\end{equation}

We noted above that if $q+n$ is even then the coefficient system ${\mathcal{H}}^{-q-n-1}$ is identically zero, while if $q+n$ is odd we have $\mathcal{H}^{-q-n-1}\cong \mathcal{W}^\gamma$. We arrive at:

\begin{theorem}[{\cite{Bredon-Equivariant,Luck-Chern, Dwyer}}]\label{thm:Bredon-spectral-sequence}
Let $\mathcal{X}$ be a finite $W$-CW-complex of dimension $n$, and let $\mathcal{C}=C(\mathcal{X})\rtimes_\gamma W$. There is a spectral sequence $E(\mathcal{C})$ converging to $K_*(\mathcal C)$, and having
\[
E^2_{p,q}(\mathcal{C}) = \begin{cases} H^{n-p+1}_W(\mathcal{X},\mathcal{W}^\gamma) & \text{if $q+n$ is odd,}\\ 0 & \text{if $q+n$ is even,}
\end{cases}
\]
and the differential $d^2$ is zero. This spectral sequence collapses rationally at the $E^2$ page, giving an isomorphism 
\[
K_*(\mathcal{C})\otimes_{\Z}\Q \cong \bigoplus_{j\geq 0} H^{*+2j}_W(\mathcal{X},\mathcal{W}^\gamma)\otimes_{\Z}\Q.
\]
\end{theorem}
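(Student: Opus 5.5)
The plan is to assemble the theorem from the construction carried out just above, and then add one genuinely new ingredient: the rational collapse. First, the filtration \eqref{eq:mathcalC-filtration} of $\mathcal{C}=C(\mathcal{X})\rtimes_\gamma W$ by the ideals $\mathcal{C}_p=C_0(\mathcal{X}\setminus\mathcal{X}_{n-p})\rtimes_\gamma W$ yields, via Schochet's exact couple, a spectral sequence converging to $K_*(\mathcal{C})$. Convergence is unproblematic because the filtration is finite of length $n+1$. Its $E^1$ term is $K_{p+q}(\mathcal{C}_p/\mathcal{C}_{p-1})$.

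Second, we identify $E^1$. By \eqref{eq:E1widetildeC-iso} it is $C^{j}(\mathcal{X},\mathcal{H}^{-q-n-1})^W$ with $j=n-p+1$. Through the isomorphisms \eqref{eq:coefficient-isomorphism} this coefficient system is $\mathcal{W}^\gamma$ when $q+n$ is odd and $0$ when $q+n$ is even; the latter holds because the algebras $C(Wz)\rtimes_\gamma W$ are finite dimensional and so have vanishing $K_1$.

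Third, we identify $d^1$ with the Bredon coboundary $\partial^j$ of Definition \ref{def:Bredon-cohomology}. For this, compare with Bredon's spectral sequence using the commuting diagram displayed above, and then quote \cite[IV]{Bredon-Equivariant}. The ingredients are that the boundary map in $K$-theory of a cell attachment computes the degree $[y:z]$, and that it acts on coefficients by the restriction map $\mathcal{W}^\gamma_{z,y}$. The latter holds because $\Psi_z$ and $\Psi_y$ are compatible with restriction, a point the paper notes is a straightforward check. Hence $E^2_{p,q}=H^{n-p+1}_W(\mathcal{X},\mathcal{W}^\gamma)$ for $q+n$ odd and $0$ otherwise. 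The vanishing of $d^2$ then follows from degree considerations alone: $d^2$ shifts $q$ by $+1$, so it maps a row with $q+n$ odd to a row with $q+n$ even (or vice versa), and one of those rows is zero. More generally, every even-page differential vanishes, so $E^3=E^2$ and the only possibly nonzero differentials are $d^3,d^5,\dots$.

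The main obstacle is the rational collapse. Here I would invoke L\"uck's equivariant Chern character \cite{Luck-Chern}, in its twisted form as in \cite{Dwyer}. For a proper (here finite-group) equivariant cohomology theory with a Mackey structure, such as the induction and restriction on $\Rep_\gamma(W_z)$ provided by twisted representation rings, L\"uck constructs a natural transformation
\[
\mathcal{H}^*_W(\mathcal{X})\otimes\Q\to\bigoplus_{j}H^{*+2j}_W(\mathcal{X},\mathcal{W}^\gamma\otimes\Q)
\]
and shows it is an isomorphism. Both sides are equivariant cohomology theories, and the map is an isomorphism on orbits $W/H$, where it reduces to the identity on $\Rep_\gamma(H)\otimes\Q$. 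The Mayer--Vietoris and five-lemma arguments over the finitely many cells then give the isomorphism in general. Rational collapse of the spectral sequence follows by comparing ranks: the $E^\infty$ page is a subquotient of $E^2$, and the total rank of $E^2\otimes\Q$ equals the rank of $K_*(\mathcal{C})\otimes\Q$. Therefore all higher differentials must vanish rationally. The remaining delicate point is verifying that the $\gamma$-twisted coefficient system admits the Mackey/induction structure needed in \cite{Luck-Chern}. I would handle this by the standard trick of passing to a central extension $\widetilde{W}$ of $W$ by a finite cyclic group that untwists $\gamma$, so that $\gamma$-representations become a direct summand of ordinary representations of stabilisers in $\widetilde{W}$. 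Alternatively, I would simply cite \cite{Dwyer}, where twisted Bredon cohomology is shown to give the rational twisted equivariant $K$-theory.
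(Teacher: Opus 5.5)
Your proposal is correct and follows essentially the same route as the paper: the $E^2$ page comes from the preceding Schochet/Bredon identification of $E^1$ and $d^1$, and $d^2$ (indeed every even-page differential) vanishes because it changes the parity of $q+n$. The rational collapse likewise comes from L\"uck's equivariant Chern character \cite[Theorem 4.6 \& Remark 4.7]{Luck-Chern}, which applies because $\mathcal{W}^\gamma$ carries the Mackey structure verified in \cite[Theorem 4.2]{Dwyer}; your rank count is just an unpacking of the collapse that the paper takes directly from L\"uck's remark.
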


\begin{proof}
We have already explained the computation of $E^2_{p,q}(\mathcal{C})$. The differential $d^2:E^2_{p,q}(\mathcal{C})\to E^2_{p+2,q-1}(\mathcal{C})$ vanishes for the simple reason that at least one of its domain and codomain is zero. The assertion about the collapse of the spectral sequence over $\Q$ follows from  \cite[Theorem 4.6 \& Remark 4.7]{Luck-Chern}, which is applicable here because the coefficient system $\mathcal{W}^\gamma$ has a Mackey structure (as verified in \cite[Theorem 4.2]{Dwyer}.)
\end{proof}

Combining Theorem \ref{thm:Bredon-spectral-sequence} with Theorem \ref{thm:Plancherel-2}, we find:

\begin{corollary}\label{cor:widetildeX-spectral-sequence}
Let $G$, $M$, $X$, $\sigma$, and $W$ be as in Section \ref{sec:p-adic-background}; let $C^*_r(G)_{(M,\sigma)}$ be the direct-summand of $C^*_r(G)$ defined in Definition \ref{def:Cstar-block}; let $\widetilde{X}$ be the space defined in Definition \ref{def:widetildeX}; and let $n=\dim\widetilde{X}$.  There is a spectral sequence $E$ converging to $K_*(C^*_r(G)_{(M,\sigma)})$, with 
\[
E^2_{p,q} = \begin{cases} H^{n-p+1}_W(\widetilde{X},\mathcal{W}^\gamma) & \text{if $q+n$ is odd,}\\ 0 & \text{if $q+n$ is even,}
\end{cases}
\]
and the differential $d^2$ is zero. This spectral sequence collapses rationally at the $E^2$ page, giving an isomorphism 
\[
K_*(C^*_r(G)_{(M,\sigma)})\otimes_{\Z}\Q \cong \bigoplus_{j\geq 0} H^{*+2j}_W(\widetilde{X},\mathcal{W}^\gamma)\otimes_{\Z}\Q.
\]
\hfill\qed
\end{corollary}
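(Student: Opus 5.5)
The plan is to obtain Corollary \ref{cor:widetildeX-spectral-sequence} by applying Theorem \ref{thm:Bredon-spectral-sequence} to $\mathcal{X}=\widetilde{X}$ and then transporting the result along the Morita equivalence of Theorem \ref{thm:Plancherel-2}.

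First I check that $\widetilde{X}$ is a legitimate input for Theorem \ref{thm:Bredon-spectral-sequence}, namely a finite $W$-CW-complex. A $W$-CW-structure on $X$ is fixed and lifted to a $\Lambda$-invariant structure on $\germ{a}_M^*$. By Lemma \ref{lem:p-CW-properties}, $X'$ is a subcomplex, so each closure of a component of $\germ{a}_M^*\setminus(\germ{a}_M^*)'$ is a closed subcomplex. Passing to the $\Lambda$-quotient gives a $W$-CW-structure on $\widetilde{X}$. It is finite because $\widetilde{X}$ is compact (Lemma \ref{lem:widetildeX-compact}). The $W$-action is cellular and an element stabilising a cell fixes it pointwise, since this holds upstairs and $\Pi$ is $W$-equivariant and maps open cells homeomorphically onto open cells. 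The twist is the constant cocycle $\gamma$, normalised as in \eqref{eq:gamma-inverse}. Theorem \ref{thm:Bredon-spectral-sequence} therefore gives a spectral sequence converging to $K_*(C(\widetilde{X})\rtimes_\gamma W)$, with the stated $E^2$ page, vanishing $d^2$, and rational collapse, where $n=\dim\widetilde{X}$ is the cellular dimension.

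Next I identify the target. By Theorem \ref{thm:Plancherel-2}, $C^*_r(G)_{(M,\sigma)}\cong C(\widetilde{X},\Compact(H))^W$, and this algebra is Morita equivalent to $C(\widetilde{X})\rtimes_\gamma W$ via the module $\widetilde{E}$. Both algebras are separable, since $H$ is separable ($G$ is second countable) and $\widetilde{X}$ is a compact metrizable space. So the Morita equivalence is stable by Brown--Green--Rieffel and induces an isomorphism $K_*(C(\widetilde{X})\rtimes_\gamma W)\cong K_*(C^*_r(G)_{(M,\sigma)})$. Composing this isomorphism with the abutment of the spectral sequence from Theorem \ref{thm:Bredon-spectral-sequence} gives a spectral sequence converging to $K_*(C^*_r(G)_{(M,\sigma)})$. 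The $E^2$ page and differentials are unchanged, and the rational statement follows by tensoring with $\Q$.

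The main point needing care is the first step: the CW-structure on $\widetilde{X}$ must be finite and $W$-equivariant in Bredon's strict sense. In particular, the $\Lambda$-quotient of a union of closed subcomplexes must be a genuine CW-complex, not one with identifications between cells. I expect this to be handled by properness of the $\Lambda$-action together with the fact that $\Lambda$ acts freely on cells of $\germ{a}_M^*$. Once that is settled, everything else is a direct citation of the two earlier results.
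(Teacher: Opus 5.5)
Your proposal is correct and matches the paper's argument: the paper also obtains this corollary by applying Theorem~\ref{thm:Bredon-spectral-sequence} to $\widetilde{X}$ and transporting the abutment along the Morita equivalence of Theorem~\ref{thm:Plancherel-2}. It uses the same finite $W$-CW-structure on $\widetilde{X}$, induced from $X$ via the $\Lambda$-quotient, that you describe; the paper sets this up just before Lemma~\ref{lem:Pi-bijection-on-orbits}. Your extra care about cellularity, finiteness, and the free proper $\Lambda$-action on cells only fills in details the paper asserts without proof.
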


\begin{corollary}\label{cor:dimension-2}
In the setting of Corollary \ref{cor:widetildeX-spectral-sequence}, suppose that the torus $X$ has dimension $\leq 2$ as a real manifold. Then we have
\[
\begin{aligned}
K_0(C^*_r(G)_{(M,\sigma)}) & \cong H^0_W(\widetilde{X},\mathcal{W}^\gamma) \oplus H^2_W(\widetilde{X},\mathcal{W}^\gamma), \\
K_1(C^*_r(G)_{(M,\sigma)})&\cong H^1_W(\widetilde{X},\mathcal{W}^\gamma).
\end{aligned}
\]
\end{corollary}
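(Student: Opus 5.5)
The plan is to use the spectral sequence of Corollary \ref{cor:widetildeX-spectral-sequence} and show that, when $\dim X\leq 2$, it degenerates at $E^2$ integrally and the resulting filtrations on $K_*$ split.

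The blow-up $\widetilde{X}$ is built from closed subcomplexes of the lift of the CW-structure of $X$ to $\germ{a}_M^*$, so $n=\dim\widetilde{X}=\dim X\leq 2$. The Bredon cohomology $H^j_W(\widetilde{X},\mathcal{W}^\gamma)$ therefore vanishes unless $0\leq j\leq n$. In the $E^2$ page, the entry $E^2_{p,q}$ is nonzero only when $j=n-p+1\in\{0,\dots,n\}$ and $q+n$ is odd. This confines the nonzero terms to at most three columns, with every other row vanishing.

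The differentials are handled as follows. The differential $d^r$ has bidegree $(-r,r-1)$. It is zero for $r=2$ by Corollary \ref{cor:widetildeX-spectral-sequence}, and more generally for all even $r$, since $r-1$ is then odd and the parity of $q$ changes, so one side is always zero. For $r\geq3$ the horizontal shift is at least $3$, which exceeds the width of at most three columns. Hence $E^2=E^\infty$.

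The main obstacle is the extension problem. Convergence gives a filtration on $K_0$ (respectively $K_1$) whose graded pieces are the surviving $E^\infty$ groups of total degree even (respectively odd). Matching total degrees through $j=n-p+1$ gives the following.
\begin{itemize}
\item For $K_1$, the only contribution is $H^1_W$, so $K_1\cong H^1_W(\widetilde{X},\mathcal{W}^\gamma)$ with no extension problem.
\item For $K_0$, the pieces are $H^0_W$ and $H^2_W$, giving a short exact sequence $0\to H^2_W\to K_0\to H^0_W\to 0$.
\end{itemize}
Here I would double-check the index bookkeeping against the Bott periodicity convention $\mathcal{H}^q\cong\mathcal{W}^\gamma$ for even $q$.

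This sequence splits because $H^0_W(\widetilde{X},\mathcal{W}^\gamma)$ is a subgroup of $C^0(\widetilde{X},\mathcal{W}^\gamma)^W$. That cochain group is a subgroup of the free abelian group $\bigoplus_z\Rep_\gamma(W_z)$, so it is free abelian, and any subgroup of it is free, hence projective. Alternatively, the splitting can be realised concretely by restricting to the $0$-skeleton: $K_0\to K_0(C(\widetilde{X}_0)\rtimes_\gamma W)$ has image $H^0_W$ and lands in a free group.

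Combining these gives $K_0\cong H^0_W\oplus H^2_W$ and $K_1\cong H^1_W$. Finally, $K_*(C^*_r(G)_{(M,\sigma)})\cong K_*(C(\widetilde{X})\rtimes_\gamma W)$ by Theorem \ref{thm:Plancherel-2}.
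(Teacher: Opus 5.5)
Your proof is correct and follows essentially the same route as the paper: because $\dim\widetilde X=\dim X\le 2$, every $d^r$ with $r\ge 3$ vanishes for degree reasons, so $E^2=E^\infty$, and the only extension problem, $0\to H^2_W\to K_0\to H^0_W\to 0$, splits because $H^0_W$ is a subgroup of the finitely generated free group $C^0(\widetilde X,\mathcal{W}^\gamma)^W$ (the paper phrases this as torsion-freeness). Your index bookkeeping is also right, both in assigning pieces to $K_0$ versus $K_1$ and in identifying which piece is the subgroup and which the quotient; note only that your ``concrete'' alternative via the $0$-skeleton identifies the quotient $H^0_W$ inside a free group but does not supply an explicit section, so it relies on the same freeness argument.
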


\begin{proof}
We have $\dim \widetilde{X}=\dim X$ as CW-complexes, so for obvious degree reasons we must have $H^i_W(\widetilde{X},\mathcal{W}^\gamma)=0$ for all $i\geq 3$. This implies, again for trivial degree reasons, that each of the differentials $d^r$ in the spectral sequence $E$ from Corollary \ref{cor:widetildeX-spectral-sequence}, for $r\geq 3$, must be zero, and thus the spectral sequence collapses at the $E^2$ page. The convergence of the spectral sequence then means that the filtration 
\[
F_{*,p}\coloneqq \image\left( K_*(\mathcal{C}_p) \xrightarrow{K_*(\text{inclusion})} K_*(\mathcal{C})\right)
\]
of $K_*(C^*_r(G)_{(M,\sigma)})$ induced by the skeletal filtration of $\widetilde{X}$ has 
\[
F_{p+q,p}/F_{p+q,p-1} \cong E^2_{p,q} \cong \begin{cases} H^{n-p+1}_W(\mathcal{X},\mathcal{W}^\gamma) & \text{if $q+n$ is odd,}\\ 0 & \text{if $q+n$ is even,}
\end{cases}
\] 
where $n=\dim X$. All of the assertions about $K_*(C^*_r(G)_{(M,\sigma)})$ follow immediately from this identity, and from the fact (clear from the definition) that $H^0_W(\widetilde{X},\mathcal{W}^\gamma)$ is always torsion-free.
\end{proof}

\subsection{\texorpdfstring{A spectral sequence for $K_*(C(X,E,W))$}{A spectral sequence for K*(C(X,E,W))}}\label{sec:KCXEW}

The spectral sequence obtained from the space $\widetilde{X}$ (Corollary \ref{cor:widetildeX-spectral-sequence}) can be used for practical computations of $K$-theory groups of the components $C^*_r(G)_{(M,\sigma)}$ of the $C^*$-algebra of a $p$-adic reductive group $G$: see the examples in the next section. In this section we will provide a second picture of this spectral sequence, based on the space $X$. This second picture is perhaps less well suited to practical computations, because it seems to depend on the normalisation of the intertwining operators in  a more delicate way than the first picture: the characters $\i_{w,\chi}$ appear in the definition. (Cf.~Remark \ref{rem:i=1}.) On the other hand, since this second picture is based directly on the torus $X$---which is closely related to an apartment in the Bruhat-Tits building of $G$---rather than on the modified space $\widetilde{X}$, this second picture of the spectral sequence may turn out to be useful in investigating the Baum-Connes isomorphism.

We continue with the notation $G$, $M$, $X$, $\sigma$, $W$, etc, established in Section \ref{sec:p-adic-background}. To compactify the notation we will write
\[
C\coloneqq C(X,E,W) = \left\{ \sum_{w\in W} b_w w\in C(X)\rtimes_{\gamma} W\ \middle|\ \begin{aligned} &\forall \chi\in X,\ \forall w'\in W_\chi',\ \forall w\in W:\\ & b_{w'w}(\chi)  = \i_{w',\chi}\gamma(w',w) b_w(\chi)\end{aligned} \right\}.
\]
We choose and fix a $W$-CW structure on the torus $X$, and let $n$ denote the dimension of $X$ as a real manifold. For each $p\in \{0,\ldots,n+1\}$. consider the open set $Y_p\coloneqq X\setminus X_{n-p}\subseteq X$. Setting $C_p\coloneqq C(Y_p, E_{Y_p}, W)$, we have an increasing sequence of ideals
\begin{equation}\label{eq:C-filtration}
0 = C_0 \subset C_1 \subset \cdots \subset C_{n+1}=C
\end{equation}
in $C$.

The filtration \eqref{eq:C-filtration} gives rise (again, by \cite[Theorem 2.1]{Schochet-I}) to a spectral sequence $(E^r(C),d^r)$, with $d^r: E^r_{p,q}(C) \to  E^r_{p-r,q+r-1}(C)$, which converges to $K_*(C)$ and has $E^1_{p,q}(C) = K_{p+q}( C_p/C_{p-1})$.

\begin{theorem}\label{thm:C-spectral-sequence}
The spectral sequence $E(C)$ has
\[
E^2_{p,q}(C) = \begin{cases} H^{n-p+1}_W(X,\mathcal{R}) & \text{if $q+n$ is odd,}\\ 0 & \text{if $q+n$ is even,}\end{cases}
\]
and the differential $d^2$ is zero. This spectral sequence collapses rationally at the $E^2$ page, giving an isomorphism
\[
K_*(C^*_r(G)_{(M,\sigma)})\otimes_{\Z}\Q \cong \bigoplus_{j\geq 0}H_W^{*+2j}(X,\mathcal{R})\otimes_{\Z}\Q.
\]
\end{theorem}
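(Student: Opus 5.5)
The strategy is to repeat, for the filtration \eqref{eq:C-filtration} of $C=C(X,E,W)$, the computation of the $E^1$ page that was carried out above for $C(\mathcal{X})\rtimes_\gamma W$, with the full crossed products replaced by the ideals $C(Y,E_Y,W)$. The rational collapse will then be deduced by comparing with the $\widetilde{X}$ spectral sequence.

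The first step identifies the subquotients. By Lemma \ref{lem:C-localisation}, applied to the closed $W$-invariant subset $Y_p\setminus Y_{p-1}$ of $Y_p$, we have
\[
C_p/C_{p-1}\cong C(X_j\setminus X_{j-1},E_{X_j\setminus X_{j-1}},W), \qquad j=n-p+1.
\]
The open-cell set $X_j\setminus X_{j-1}$ is $W$-homeomorphic to $B^j\times Z_j$ with $W$ acting trivially on $B^j$. By Lemma \ref{lem:p-CW-properties}, the groups $W'_\chi$, the scalars $\i_{w',\chi}$ and the stabilisers are constant along each open cell. The pointwise description of Theorem \ref{thm:p-Morita} (Lemma \ref{lem:C-completeness}) is therefore constant in the $B^j$ variable, which gives
\[
C_p/C_{p-1}\cong C_0(B^j)\otimes C(Z_j,E_{Z_j},W).
\]
Over each orbit $Wz\subseteq Z_j$, the proof of Lemma \ref{lem:C-completeness} shows that $C(Wz,E,W)=P_{\i,Wz}\bigl(C(Wz)\rtimes_\gamma W\bigr)$, a direct summand. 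Its $K_0$ corresponds under $\Psi_z$ in \eqref{eq:Phi_z} to $K_0\bigl(C^*_\gamma(W_z)P_{\i,z}\bigr)=\Rep_{\gamma,\overline{\i_z}}(W_z)=\mathcal{R}_z$, using $P_{\i,Wz}P_z=P_{\i,z}$. Its $K_1$ vanishes because the algebra is finite-dimensional. Bott periodicity then gives $E^1_{p,q}(C)\cong C^j(X,\mathcal{R})^W$ when $q+n$ is odd and $0$ otherwise, exactly as in \eqref{eq:E1widetildeC-iso}.

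The second step identifies $d^1$ with the Bredon coboundary $\partial$ for $\mathcal{R}$. Here I would use naturality. The inclusions $C_p\subseteq \mathcal{C}_p=C_0(Y_p)\rtimes_\gamma W$ give a morphism of filtered algebras, hence a map of spectral sequences $E(C)\to E(\mathcal{C})$. On $E^1$ this map is the inclusion $C^j(X,\mathcal{R})^W\hookrightarrow C^j(X,\mathcal{W}^\gamma)^W$ induced by the summand inclusions $\mathcal{R}_z\subseteq\mathcal{W}^\gamma_z$. Since $d^1$ on $E(\mathcal{C})$ is the Bredon coboundary (by the discussion preceding Theorem \ref{thm:Bredon-spectral-sequence}), and Lemma \ref{lem:R-def} shows that this coboundary preserves $\mathcal{R}$, the $d^1$ on $E(C)$ is the restricted coboundary. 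Hence $E^2_{p,q}(C)=H^{n-p+1}_W(X,\mathcal{R})$ for $q+n$ odd and $0$ otherwise. The differential $d^2$ shifts $q$ by one, so it changes the parity of $q+n$, and one of its domain or codomain is zero; thus $d^2=0$.

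The main obstacle is the rational collapse, because Lück's theorem needs a Mackey structure that $\mathcal{R}$, not being generic, may lack. My first attempt avoids this. $E(C)\otimes\Q$ converges to $K_*(C)\otimes\Q\cong K_*(C(\widetilde{X})\rtimes_\gamma W)\otimes\Q$, by Theorems \ref{thm:p-Morita} and \ref{thm:Plancherel-2}. That group has total rank $\sum_j\operatorname{rank}H^j_W(\widetilde{X},\mathcal{W}^\gamma)$, by Corollary \ref{cor:widetildeX-spectral-sequence}. It then suffices to show that
\[
H^j_W(\widetilde{X},\mathcal{W}^\gamma)\otimes\Q\cong H^j_W(X,\mathcal{R})\otimes\Q
\]
for every $j$, since a spectral sequence whose $E^2$ page has total rank equal to that of its abutment must collapse. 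I would prove the rank equality at the cochain level. By Lemma \ref{lem:Pi-bijection-on-orbits}, $W$-orbits of cells of $\widetilde{X}$ and of $X$ correspond. For a lift $\widetilde z$ of $z$, Lemma \ref{lem:W_chi-splitting} gives $W_z=W'_z\rtimes W_{\widetilde z}$, and restriction to $W_{\widetilde z}$ yields a bijection $\widehat{W_z}^{\gamma}_{\overline{\i_z}}\cong\widehat{W_{\widetilde z}}^{\gamma}$. This should give an isomorphism of cochain complexes $C^*(X,\mathcal{R})^W\cong C^*(\widetilde{X},\mathcal{W}^\gamma)^W$ compatible with the coboundaries, where the incidence numbers match because $\Pi$ is a cellular homeomorphism on each open cell. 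If the signs or restriction maps prove troublesome, the fallback is to transport Dwyer's Mackey structure to $\mathcal{R}$ via induction and multiplication by $P_{\i}$, and then apply \cite[Theorem 4.6]{Luck-Chern} directly.
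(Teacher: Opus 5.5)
Your first two steps match the paper's proof: localisation via Lemma~\ref{lem:C-localisation}, the central projection cutting $E^1(\mathcal{C})$ down to $C^j(X,\mathcal{R})^W$, naturality for $C\subseteq\mathcal{C}$ to transport the Bredon coboundary, and parity for $d^2$. The difference is in the rational collapse, and your route is valid but needs one correction.

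\textbf{The paper's route.} It uses the filtered homomorphism $\Psi\colon C\hookrightarrow C(X)\rtimes_\gamma W\xrightarrow{\Pi^*}C(\widetilde{X})\rtimes_\gamma W$. It computes $E^1(\Psi)$ on each orbit summand through the bimodule $\delta_{\widetilde z}(C(W\widetilde z)\rtimes_\gamma W)\Pi^*(\delta_z)\cong C^*_\gamma(W_z)$. This gives restriction $\Rep_{\gamma,\overline{\i_z}}(W_z)\to\Rep_\gamma(W_{\widetilde z})$, which is bijective by Lemmas~\ref{lem:Pi-bijection-on-orbits} and~\ref{lem:W_chi-splitting}. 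Hence $E(\Psi)$ is an isomorphism of spectral sequences, and rational collapse passes from $\widetilde{X}$ to $X$.

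\textbf{Your route.} You build the same cochain isomorphism by hand. You then count ranks against the abutment $K_*(C)\cong K_*(C(\widetilde X)\rtimes_\gamma W)$ supplied by Theorems~\ref{thm:p-Morita} and~\ref{thm:Plancherel-2}. The rank count works if you count over one Bott period (total degrees $0$ and $1$); over $\Q$, any nonzero $d^r$ lowers this total rank by twice its rank.

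\textbf{The correction.} Your route forces you to check compatibility with the coboundaries yourself, and the reason you give is not quite right. Incidence numbers need not match cell by cell. A cell $y\subseteq X'$ may be attached to $z$ from both sides of $X'$, and then $y$ has several lifts in $\partial\widetilde z$. For example, a $1$-cell can run across $X\setminus X'$ from a vertex $y\in X'$ back to $y$ from the other side. What holds, and suffices, is
$\sum_{\widetilde y\in\partial\widetilde z\cap\Pi^{-1}(y)}[\widetilde y:\widetilde z]=[y:z]$.
This follows from naturality of cellular chains for $\Pi$, using the lifted characteristic maps $\Pi\circ\theta_{\widetilde z}=\theta_z$.

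\textbf{Comparison.} The paper's route gets this compatibility for free. It also gives an integral isomorphism $E(C)\cong E(C(\widetilde X)\rtimes_\gamma W)$ of whole spectral sequences, and does not need the Morita equivalences for the collapse. Yours makes the combinatorics explicit and proves exactly what the statement asks. The Mackey-structure fallback is not needed.
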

Here $\mathcal{R}$ is the coefficient system $\mathcal{R}_z = \Rep_{\gamma,\overline{\i_z}}(W_z)$: see Lemma \ref{lem:R-def}.

\begin{proof}
Lemma \ref{lem:C-localisation} implies that for each $p$ we have a canonical isomorphism
\[
C_p/C_{p-1} \cong C(Y_p\setminus Y_{p-1}, E_{Y_p\setminus Y_{p-1}}, {W})=C(X_j\setminus X_{j-1}, E_{X_j\setminus X_{j-1}}, W),
\]
where $j\coloneqq n-p+1$. We again identify $X_j\setminus X_{j-1}$, the union of the open $j$-cells in $X$, with the product $B^j\times Z_j$. 

It is a straightforward matter to verify, using \eqref{eq:PO}, that 
\[
C(B^j\times Z_j,E_{B^j\times Z_j},W)=P_{\i,Z_j}\left(C_0(B^j\times Z_j)\rtimes_\gamma W\right),
\]
where $P_{\i,Z_j}$ is the projection in the centre of the multiplier algebra of $C_0(B^j\times Z_j)\rtimes_\gamma W$ defined by
\begin{equation}\label{eq:Pj}
P_{\i,Z_j} = \sum_{z\in Z_j} |{W}'_z|^{-1}\sum_{w\in W'_z} \i_{w,z} \delta_{B^j\times\{z\}} w,
\end{equation}
with $\delta_{B^j\times\{z\}}\in C_b(B^j\times Z_j)$ denoting the characteristic function of the open cell $e_z=B^j\times \{z\}$.

Now let $\mathcal{C}=C(X)\rtimes_\gamma W$, and let $E(\mathcal{C})$ denote the spectral sequence in $K$-theory arising, as explained above, from the filtration of $X$ by its skeleta. The inclusion of $C^*$-algebras $C\into \mathcal{C}$ is compatible with the filtrations \eqref{eq:mathcalC-filtration} and \eqref{eq:C-filtration}, so it induces a map of spectral sequences $E(C)\to E(\mathcal{C})$. The split-exactness of $K$-theory ensures that this map is injective on the $E^1$ page. 

The isomorphism $C_0(B^j\times Z_j)\rtimes_\gamma W\xrightarrow{\cong} C_0(B_j)\otimes\left( C(Z_j)\rtimes_\gamma W\right)$ sends $P_{\i,Z_j}$ to the projection $1_{C_0(B_j)}\otimes Q_{\i,Z_j}$, where $Q_{\i,Z_j}\in C(Z_j)\rtimes_\gamma W$ is given by
\[
Q_{\i,Z_j} = \sum_{z\in Z_j} |W'_{z}|^{-1}\sum_{w\in W'_z} \i_{w,z} \delta_z w.
\]
The isomorphism
\[
C(Z_j)\rtimes_\gamma W \xrightarrow{\cong} \bigoplus_{Wz\in W\backslash Z_j} C(Wz)\rtimes_\gamma W
\]
sends $Q_{\i,Z_j}$ to $\bigoplus_{Wz\in W\backslash Z_j} P_{\i,Wz}$, where $P_{\i,Wz}$ is as in \eqref{eq:PO-def}. We saw in the proof of Lemma \ref{lem:C-completeness} that the Morita equivalence $C(Wz)\rtimes_\gamma W\sim C^*_{\gamma}(W_z)$ sends the central projection $P_{\i,Wz}$ to the central projection $P_{\i,z}$, whose action on $K$-theory is to project $\Rep_{\gamma}(W_z)$ onto its direct-summand $\Rep_{\gamma,\overline{\i_z}}(W_z)$. Putting all of this together, we conclude that the image of $E^1(C)$ in $E^1(\mathcal{C})$ corresponds, under the isomorphism $E^1_{p,q}(\mathcal{C})\cong C^j(X,\mathcal{W}^\gamma)^W$ of \eqref{eq:E1widetildeC-iso} (where $q+n$ is odd), to the subcomplex $C^j(X,\mathcal{R})^W$. Taking cohomology, we obtain $E^2_{p,q}(C)\cong H^j_W(X,\mathcal{R})$ for $q+n$ odd. When $q+n$ is even, on the other hand, we have $E^1_{p,q}(C)\subseteq E^1_{p,q}(\widetilde{C})=0$, so $E^2_{p,q}(C)=0$ too. The differential $d^2$ is zero for the same reason as in Theorem \ref{thm:Bredon-spectral-sequence}.

We are left to prove that the spectral sequence collapses rationally at the $E^2$ page. We will do this by exhibiting an isomorphism between this spectral sequence and the one in Corollary \ref{cor:widetildeX-spectral-sequence}. 

Let $\widetilde{\mathcal{C}}\coloneqq C(\widetilde{X})\rtimes_\gamma W$. Recalling that $C=C(X,E,W)$ and $\mathcal{C}\coloneqq C(X)\rtimes_\gamma W$, we consider the composition of $C^*$-algebra homomorphisms
\[
\Psi : C \xrightarrow{\text{include}} \mathcal{C} \xrightarrow{\Pi^*} \widetilde{\mathcal{C}},
\]
where $\Pi^*:C(X)\rtimes_\gamma W\to C(\widetilde{X})\rtimes_\gamma W$ is the homomorphism induced by the $W$-equivariant gluing map $\Pi:\widetilde{X}\to X$. Equip $\widetilde{X}$ with the $W$-CW-structure induced by our chosen $W$-CW-structure on $X$. Since $\Pi$ maps open $j$-cells to open $j$-cells, the homomorphism $\Psi:C\to\widetilde{C}$ respects the filtrations by skeleta on either side, and thus induces a morphism of spectral sequences $E(\Psi):E(C)\to E(\widetilde{\mathcal{C}})$, where $E(\widetilde{\mathcal{C}})$ is the spectral sequence from Corollary \ref{cor:widetildeX-spectral-sequence}.

Computing this induced map on the $E^1$ page, one finds a diagram
\begin{equation}\label{eq:Phi-diagram}
\begin{gathered}
\xymatrix{
E^1_{p,q}(C) \ar[r]^-{E(\Psi)} & E^1_{p,q}(\widetilde{C}) \\
\displaystyle\bigoplus_{Wz\in W\backslash Z_j} \Rep_{\gamma,\overline{\i_z}}(W_z) \ar[r] \ar[u]^-{\cong} & \displaystyle\bigoplus_{W\widetilde{z}\in W\backslash \widetilde{Z}_j} \Rep_\gamma(W_{\widetilde{z}}) \ar[u]^-{\cong}
}
\end{gathered}
\end{equation}
in which the vertical arrows are the isomorphisms established above using Bott periodicity and the imprimitivity theorem, and where the bottom arrow is defined so as to make the diagram commute. 

To compute the bottom arrow in \eqref{eq:Phi-diagram}, we first note that Lemma \ref{lem:Pi-bijection-on-orbits} (which says that $\Pi$ induces a bijection $W\backslash Z_j \xrightarrow{\cong} W\backslash \widetilde{Z}_j$) reduces the computation to that of a single summand $\Rep_{\gamma,\overline{\i_z}}(W_z)\to \Rep_\gamma(W_{\widetilde{z}})$, where $\Pi(\widetilde{z})=z$. Recalling (from Example \ref{ex:imprimitivity}) that the Morita equivalence $C(Wz)\rtimes_\gamma W\sim C^*_\gamma(W_z)$ is implemented by the $C(Wz)\rtimes_\gamma W$-$C^*_\gamma(W_z)$ bimodule $(C(Wz)\rtimes_\gamma W)\delta_z$, and similarly for $\widetilde{z}$, we find that the bottom arrow in \eqref{eq:Phi-diagram} sends each $\gamma$-representation $H$ of $W_z$ (lying over $\overline{\i_z}$) to the $\gamma$-representation
\[
\delta_{\widetilde{z}}(C(W\widetilde z)\rtimes_\gamma W)\Pi^*(\delta_z) \otimes_{C^*_\gamma(W_z)} H
\]
of $W_{\widetilde z}$. We have 
\[
\delta_{\widetilde z}(C(W\widetilde z)\rtimes_\gamma W)\Pi_j^*(\delta_z)= \lspan\{ \delta_{\widetilde z}w\ |\ w\in W_z\},
\]
which is isomorphic to $C^*_\gamma(W_z)$ as a $C^*_\gamma(W_{\widetilde z})$-$C^*_\gamma(W_z)$ bimodule. So the bottom arrow in \eqref{eq:Phi-diagram} is given simply by restriction of $\gamma$-representations, from $W_z$ to its subgroup $W_{\widetilde{z}}$, for each $z$. 

Now Lemma \ref{lem:W_chi-splitting} implies that $W_z=W'_z\rtimes W_{\widetilde{z}}$, while Lemma \ref{lem:Wprime-normal} implies that the character $\overline{\i_z}$ of $W'_z$ is normalised by $W_z$. Thus the restriction of $\gamma$-representations from $W_z$ to $W_{\widetilde{z}}$ gives an isomorphism $\Rep_{\gamma,\overline{\i_z}}(W_z)\xrightarrow{\cong} \Rep_\gamma(W_{\widetilde{z}})$, and so the bottom arrow in \eqref{eq:Phi-diagram} is an isomorphism. This implies that the map of spectral sequences $E(\Psi)$ is an isomorphism on the $E^1$ pages, and therefore that $E(\Psi)$ is an isomorphism everywhere. Since we know that $E(\widetilde{\mathcal{C}})$ collapses over $\Q$ at the $E^2$ page, the same is true of $E(C)$.
\end{proof}

\subsection{Relation to the ABPS conjectures}\label{sec:ABPS}

In this section we will briefly indicate how the results of this paper are related to conjectures of Aubert, Baum, Plymen, and Solleveld, that have recently been verified by Solleveld. 

Let $M$ be a Levi subgroup of a $p$-adic reductive group $G$, let $\sigma$ be a unitary supercuspidal representation of $M$, and let $\germ{s}$ denote the Bernstein component in the tempered dual of $G$ corresponding to the pair $(M,\sigma)$. Thus $\germ{s}$ consists of all those isomorphism classes of irreducible tempered unitary representations of $G$ whose space of smooth vectors can be embedded as a subquotient of a representation parabolically induced from some possibly-non-unitary unramified twist of $\sigma$. (This construction goes back to \cite{Bernstein-centre}; see \cite{Renard}, for instance, for an exposition.) We let $C^*_r(G)_{\germ{s}}$ denote the direct summand of $C^*_r(G)$ corresponding to this Bernstein component. In terms of the Plancherel decomposition of $C^*_r(G)$ (Theorem \ref{thm:Plancherel}) we have
\begin{equation}\label{eq:ABPS-Cstar}
C^*_r(G)_{\germ{s}} = C^*_r(G)_{(M,\sigma)} \oplus \bigoplus_{[M_\tau,\tau]} C^*_r(G)_{(M_\tau,\tau)}
\end{equation}
where the sum is over  the (finite) set of equivalence classes of non-cuspidal discrete-series representations $\tau$ of Levi subgroups $M_\tau\subseteq G$ lying in the Bernstein component $\germ{s}$. Aubert, Baum, Plymen, and Solleveld gave a conjectural formula for $K_*(C^*_r(G)_{\germ{s}})$ in terms of twisted equivariant $K$-theory in \cite[Conjecture 5]{ABPS}, which has recently been verified in \cite[Corollary 6.26]{Solleveld-notes}. On the other hand, as we have seen in this section, our Theorem \ref{thm:Plancherel-2} gives a formula, again in terms of twisted equivariant $K$-theory, for each of the summands on the right-hand side of \eqref{eq:ABPS-Cstar}. Our $K$-theoretic result is, in this sense, a refinement of the ABPS formula; but the connection is not as straightforward as that characterisation might suggest since, as we shall now explain, it involves a nontrivial isomorphism. 

Let $X_M$ denote the torus of unitary unramified characters of $M$, and let $W_\sigma$ and $\gamma_\sigma$ be the finite group and $2$-cocycle associated to $\sigma$ in Definition \ref{def:W} and Theorem \ref{thm:pI-properties}. We deploy the corresponding notation $X_{M_\tau}$, $W_\tau$, and $\gamma_\tau$ for each pair $(M_\tau,\tau)$ appearing on the right-hand side of \eqref{eq:ABPS-Cstar}. Taking $K$-theory on both sides of \eqref{eq:ABPS-Cstar} and applying \cite[Corollary 6.26]{Solleveld-notes} on the left and Theorem \ref{thm:Plancherel-2} on the right, we obtain 
\begin{equation}\label{eq:ABPS-K-iso}
K^*_{W_\sigma,\gamma_\sigma}(X_M) \cong K^*_{W_\sigma,\gamma_\sigma}\left(\widetilde{X}_M\right) \oplus \bigoplus_{\tau} K^*_{W_\tau,\gamma_\tau}\left(\widetilde{X}_{M_\tau}\right).
\end{equation}
Here the tildes denote blow-ups along the points with nontrivial scalar intertwining groups, as in Definition \ref{def:widetildeX}.

The isomorphism \eqref{eq:ABPS-K-iso} reflects a lot of interesting representation theory. At the most superficial level, for instance, it shows that the Bernstein component $\germ{s}$ can contain a non-cuspidal discrete-series representation only if $(W_\sigma)_\chi'\neq 1$ for some $\chi\in X_M$. This last fact is of course already known, thanks to the highly non-trivial chain of equivalences connecting scalar normalised intertwining operators to zeros of the Plancherel density at unitary unramified characters, to poles of the Plancherel density at non-unitary unramified characters, to non-cuspidal discrete-series representations. (See \cite{Silberger-dimension,Silberger-special,Silberger-discrete,Heiermann} for the various links in this chain.) In its full depth the isomorphism \eqref{eq:ABPS-K-iso} describes, in terms of the geometric blow-up construction $X\mapsto \widetilde{X}$,  precisely how the connection between scalar intertwiners and non-cuspidal discrete series is manifested in $K$-theory. 

A corresponding description of how this connection plays out at the level of the tempered dual  is  given by comparing \cite[Conjecture 2]{ABPS}---which was verified in \cite[Theorem 9.9]{Solleveld-endomorphism}---with our Corollary \ref{cor:extended-quotient}. This comparison gives a bijection of twisted extended quotients
\begin{equation}\label{eq:ABPS-duals}
\left( {X}/\!\!/ W_\sigma\right)_{\gamma_\sigma} \cong \left(\widetilde{X}_M/\!\!/ W_\sigma\right)_{\gamma_\sigma}\sqcup \bigsqcup_{[M_\tau,\tau]} \left( \widetilde{X}_{M_\tau}/\!\!/ W_\tau\right)_{\gamma_\tau}.
\end{equation}
It should be possible, and it would certainly be interesting, to establish the isomorphism \eqref{eq:ABPS-K-iso} and the bijection \eqref{eq:ABPS-duals} directly, using known results from representation theory. The fact that these identifications can alternatively be obtained as a byproduct of computing the structure and $K$-theory of $C^*_r(G)$ in two different ways is noteworthy in its own right.

\section{Examples}\label{sec:examples}

\subsection{Discrete-series components}\label{sec:discrete-series}

Suppose that $M=G$, so that $X$ is the torus of unramified unitary characters of $G$, and $\sigma$ is a discrete-series representation of $G$. The Weyl group $W_G$ is trivial, so we have $W=X_\sigma = \{\psi\in X\ |\ \sigma\cong \sigma\otimes\psi\}$. Clearly $W$, as a subgroup of $X$, acts freely on $X$, and in particular the groups $W'_\chi$ are all trivial, so we have
\[
C^*_r(G)_{(G,\sigma)} \sim C(X,E,W) = C(X)\rtimes_\gamma X_\sigma.
\]
In this case we have $\widetilde{X}=X$, again because of the triviality of the groups $W'_\chi$.

If the cocycle $\gamma$ is trivial then the freeness of the action of $X_\sigma$ on $X$ implies that $C(X)\rtimes X_\sigma$ is Morita equivalent to $C(X/X_\sigma)$ (\cite{Rieffel-applications-transformation}), and so we have $K_*(C^*_r(G)_{(G,\sigma)})\cong K^*(X/X_\sigma)$.

\subsection{\texorpdfstring{Trivial $R$-groups}{Trivial R-groups}}\label{subsec:R-trivial}

Suppose that all of the induced representations $\Ind_P^G(\sigma\otimes\chi)$ are irreducible. Harish-Chandra's intertwining algebra theorem implies that this occurs if and only if $W'_\chi = W_\chi$ for every $\chi\in X$; and Lemma \ref{lem:W_chi-splitting} implies that the latter occurs if and only if the $W$-action on the modified parameter space $\widetilde{X}$ is free. We have, as always (by Theorem \ref{thm:Plancherel-2}), $C^*_r(G)_{(M,\sigma)} \sim C(\widetilde{X})\rtimes_\gamma W$. If the cocycle $\gamma$ is trivial then the freeness of the $W$-action implies that
\[
C^*_r(G)_{(M,\sigma)}\sim C(\widetilde{X}/W) \cong C(X/W).
\]
(A very similar result was noted by Plymen in \cite[2.7 Corollary]{Plymen-Reduced}.) If we don't assume that $\gamma$ is trivial, we can still conclude that $C^*_r(G)_{(M,\sigma)}$ is a continuous-trace algebra over $\widetilde{X}/W\cong X/W$ (\cite[Theorem 5, p.18]{Wassermann-thesis}; cf.~\cite[p.168]{Rosenberg-Mackey}).  In particular, if $H^3(X/W,\Z)$ is trivial then we can again conclude that $C^*_r(G)_{(M,\sigma)}\sim C(X/W)$.

An interesting example of this special case has recently been studied in  \cite{Aubert-Plymen-Klein}. (The example, which is due originally to Kutzko, has previously been studied from different angles in \cite[Section 4]{Roche-parabolic-induction}, \cite[11.8]{Goldberg-Roche-SLn}, and \cite{AFO}.) Here $G=\SL_8(F)$, $M$ is the block-diagonal Levi subgroup $(\GL_2(F)\times \GL_2(F)\times \GL_4(F))\cap G$, and $\sigma$ is a certain supercuspidal representation of $M$. Roche showed in \cite{Roche-parabolic-induction} that all of the parabolically induced representations $\Ind_P^G(\sigma\otimes\chi)$ are irreducible. The group $W$ in this example has order $2$, so the cocycle $\gamma$ can be trivialised (see \eqref{eq:gamma-inverse}), and we consequently have $C^*_r(G)_{(M,\sigma)} \sim C(X/W)$. Aubert and Plymen point out in \cite{Aubert-Plymen-Klein} that $X/W$ is  a Klein bottle, and so we have
\[
\begin{aligned}
K_0(C^*_r(G))_{(M,\sigma)} & \cong H^0(X/W,\Z)\oplus H^2(X/W,\Z) \cong \Z \oplus \Z/2\Z,\\
K_1(C^*_r(G))_{(M,\sigma)} & \cong H^1(X/W,\Z)\cong \Z.
\end{aligned}
\]

\subsection{\texorpdfstring{$\dim X=1$}{dim X = 1}}\label{sec:1d}

Suppose that the torus $X$ is one-dimensional (as a real manifold.) We have $W_M=\{1\}$ or $\{1,s\}$, where $s\chi=\chi^{-1}$ for each $\chi\in X$. Given a discrete-series representation $\sigma$ of $M$, we consider the following two cases:

\paragraph*{Case 1: either $W_M=\{1\}$, or $W_M=\{1,s\}$ where for every $\chi\in X$ we have ${}^s\sigma\not\cong \sigma\otimes\chi$.} Then we have $W=X_\sigma$, a finite subgroup of $X$. Since the $W$-action on $X$ is free we have $X'=\emptyset$, $\widetilde{X}=X$, and $H^*_W(X,\mathcal{W}^\gamma)=H^*(X/X_\sigma, \Z)$, the ordinary cohomology of the circle $X/X_\sigma$ with integer coefficients. Corollary \ref{cor:dimension-2} thus gives
\[
K_0(C^*_r(G)_{(M,\sigma)})\cong K_1(C^*_r(G)_{(M,\sigma)})\cong \Z
\]
in this case.

\paragraph*{Case 2: $W_M=\{1,s\}$, and ${}^s\sigma\cong \sigma\otimes\chi$ for some $\chi\in X$.} By replacing $\sigma$ with $\sigma\otimes\psi$, where $\psi\in X$ has $\psi^2=\chi$, we may in fact assume that ${}^s\sigma\cong \sigma$. We then have $W=X_\sigma\rtimes \langle s\rangle$, a dihedral group (including the degenerate cases $|X_\sigma|=1$ or $2$). The points in $X$ with nontrivial stabiliser for the $W$-action fall into two orbits, each permuted simply transitively by $X_\sigma$; let us call these two orbits $\mathcal{O}_\bullet$ and $\mathcal{O}_\circ$. To illustrate in the case where $|X_\sigma|=3$:

\begin{center}
$X$ : \quad \begin{tikzpicture}[baseline=(current bounding box.center)]
\draw (0,0) circle (1)
node[draw,circle, fill=black,inner sep=2pt] at (0:1) {}
node[draw,circle, fill=black,inner sep=2pt] at (120:1) {}
node[draw,circle, fill=black,inner sep=2pt] at (240:1) {}
node[draw,circle, fill=white,inner sep=2pt] at (60:1) {}
node[draw,circle, fill=white,inner sep=2pt] at (180:1) {}
node[draw,circle, fill=white,inner sep=2pt] at (300:1) {};

\end{tikzpicture}
\end{center}
The black vertices are the elements of $\mathcal{O}_{\bullet} = X_\sigma$, while the white vertices are the elements of $\mathcal{O}_{\circ}$. The group $W$ is the symmetry group of the triangle with vertices $\mathcal{O}_\bullet$ (or, equivalently, $\mathcal{O}_\circ$.)

Returning to the general case, there are three essentially different possibilities for the set $X'$:
(a) $X'=\emptyset$; (b) $X'=\mathcal{O}_\bullet$; or (c) $X'=\mathcal{O}_\bullet\cup \mathcal{O}_\circ$. (The fourth case, where $X'=\mathcal{O}_\circ$, is essentially identical to case (b).) 

We can visualise $\germ{a}_M^*$ as a line in which the preimages of the orbits $\mathcal{O}_\bullet$ and $\mathcal{O}_\circ$ under the quotient map $\germ{a}_M^*\to X$ are marked by $\bullet$s and $\circ$s, respectively:
\begin{center}
$\germ{a}_M^*$\quad :\quad 
\begin{tikzpicture}[scale=1.25] 
\draw[thick,dotted] (-3,0) -- (-2.75,0);
\draw (-2.75,0) -- (3.75,0);
\draw[thick, dotted] (3.75,0) -- (4,0);
\node[draw,circle, fill=white,inner sep=2pt] at (-2,0) {};
\node[draw,circle, fill=black,inner sep=2pt] at (-1,0) {};
\node[draw,circle, fill=white,inner sep=2pt] at (0,0) {};
\node[draw,circle, fill=black,inner sep=2pt] at (1,0) {};
\node[draw,circle, fill=white,inner sep=2pt] at (2,0) {};
\node[draw,circle, fill=black,inner sep=2pt] at (3,0) {};
\end{tikzpicture}
\end{center}

Referring to the cases (a), (b), and (c) defined above, the subset $(\germ{a}_M^*)'$ of $\germ{a}_M^*$ is either (a) $\emptyset$, (b) all the $\bullet$s, or (c) the $\bullet$s and the $\circ$s. To form $\widetilde{\germ{a}_M^*}$ we remove $(\germ{a}_M^*)'$ from $\germ{a}_M^*$, leaving a disjoint union of open intervals (one interval in case (a), countably many in cases (b) and (c).) The space $\widetilde{\germ{a}_M^*}$ is the disjoint union of the closure of these intervals:
\begin{enumerate}[\rm(a)]
\item \qquad 
\begin{tikzpicture}[scale=1.25] 
\draw[thick,dotted] (-3,0) -- (-2.75,0);
\draw (-3,0) -- (3.75,0);
\draw[thick, dotted] (3.75,0) -- (4,0);
\node[draw,circle, fill=white,inner sep=2pt] at (-2,0) {};
\node[draw,circle, fill=black,inner sep=2pt] at (-1,0) {};
\node[draw,circle, fill=white,inner sep=2pt] at (0,0) {};
\node[draw,circle, fill=black,inner sep=2pt] at (1,0) {};
\node[draw,circle, fill=white,inner sep=2pt] at (2,0) {};
\node[draw,circle, fill=black,inner sep=2pt] at (3,0) {};
\end{tikzpicture}

\item \qquad  
\begin{tikzpicture}[scale=1.25]
\def\t{.15}
\draw[thick,dotted] (-3,0) -- (-2.75,0);
\draw (-2.75,0) -- (-2,0);
\draw (-2,0) -- (-1-\t,0);
\draw (-1+\t,0) -- (0,0);
\draw (0,0) -- (1-\t,0);
\draw (1+\t,0) -- (2,0);
\draw (2,0) -- (3-\t,0);
\draw (3+\t,0) -- (3.75,0);
\draw[thick,dotted] (3.75,0) -- (4,0);

\node[draw,circle, fill=white,inner sep=2pt] at (-2,0) {};
\node[draw,circle, fill=black,inner sep=2pt] at (-1-\t,0) {};
\node[draw,circle, fill=black,inner sep=2pt] at (-1+\t,0) {};
\node[draw,circle, fill=white,inner sep=2pt] at (0,0) {};
\node[draw,circle, fill=black,inner sep=2pt] at (1-\t,0) {};
\node[draw,circle, fill=black,inner sep=2pt] at (1+\t,0) {};
\node[draw,circle, fill=white,inner sep=2pt] at (2,0) {};
\node[draw,circle, fill=black,inner sep=2pt] at (3-\t,0) {};
\node[draw,circle, fill=black,inner sep=2pt] at (3+\t,0) {};
\end{tikzpicture}

\item \qquad  
\begin{tikzpicture}[scale=1.25]
\def\t{.15}
\draw[thick,dotted] (-3,0) -- (-2.75,0);
\draw (-2.75,0) -- (-2-\t,0);
\draw (-2+\t,0) -- (-1-\t,0);
\draw (-1+\t,0) -- (0-\t,0);
\draw (0+\t,0) -- (1-\t,0);
\draw (1+\t,0) -- (2-\t,0);
\draw (2+\t,0) -- (3-\t,0);
\draw (3+\t,0) -- (3.75,0);
\draw[thick,dotted] (3.75,0) -- (4,0);

\node[draw,circle, fill=white,inner sep=2pt] at (-2-\t,0) {};
\node[draw,circle, fill=white,inner sep=2pt] at (-2+\t,0) {};
\node[draw,circle, fill=black,inner sep=2pt] at (-1-\t,0) {};
\node[draw,circle, fill=black,inner sep=2pt] at (-1+\t,0) {};
\node[draw,circle, fill=white,inner sep=2pt] at (0-\t,0) {};
\node[draw,circle, fill=white,inner sep=2pt] at (0+\t,0) {};
\node[draw,circle, fill=black,inner sep=2pt] at (1-\t,0) {};
\node[draw,circle, fill=black,inner sep=2pt] at (1+\t,0) {};
\node[draw,circle, fill=white,inner sep=2pt] at (2-\t,0) {};
\node[draw,circle, fill=white,inner sep=2pt] at (2+\t,0) {};
\node[draw,circle, fill=black,inner sep=2pt] at (3-\t,0) {};
\node[draw,circle, fill=black,inner sep=2pt] at (3+\t,0) {};
\end{tikzpicture}
\end{enumerate}

Taking the quotient by the lattice $\Lambda$, the space $\widetilde{X}$ is either a circle or a disjoint union of closed intervals. To illustrate in the case $|X_\sigma|=3$:

\begin{center}
\begingroup
\setlength{\tabcolsep}{20pt} 
\begin{tabular}{ccc}
Case (a) & Case (b) & Case (c) \\[5pt]

\begin{tikzpicture}
\draw (0,0) circle (1)
node[draw,circle, fill=black,inner sep=2pt] at (0:1) {}
node[draw,circle, fill=black,inner sep=2pt] at (120:1) {}
node[draw,circle, fill=black,inner sep=2pt] at (240:1) {}
node[draw,circle, fill=white,inner sep=2pt] at (60:1) {}
node[draw,circle, fill=white,inner sep=2pt] at (180:1) {}
node[draw,circle, fill=white,inner sep=2pt] at (300:1) {};

\end{tikzpicture}

&

\begin{tikzpicture}
\def\t{10}
\draw[domain=0+\t:120-\t] plot ({cos(\x)}, {sin(\x)});
\draw[domain=120+\t:240-\t] plot ({cos(\x)}, {sin(\x)});
\draw[domain=240+\t:360-\t] plot ({cos(\x)}, {sin(\x)})
node[draw,circle, fill=black,inner sep=2pt] at (\t:1) {}
node[draw,circle, fill=white,inner sep=2pt] at (60:1) {}
node[draw,circle, fill=black,inner sep=2pt] at (120-\t:1) {}
node[draw,circle, fill=black,inner sep=2pt] at (120+\t:1) {}
node[draw,circle, fill=white,inner sep=2pt] at (180:1) {}
node[draw,circle, fill=black,inner sep=2pt] at (240-\t:1) {}
node[draw,circle, fill=black,inner sep=2pt] at (240+\t:1) {}
node[draw,circle, fill=white,inner sep=2pt] at (300:1) {}
node[draw,circle, fill=black,inner sep=2pt] at (360-\t:1) {};

\end{tikzpicture}

&

\begin{tikzpicture}
\def\t{10}
\draw[domain=0+\t:60-\t] plot ({cos(\x)}, {sin(\x)});
\draw[domain=60+\t:120-\t] plot ({cos(\x)}, {sin(\x)});
\draw[domain=120+\t:180-\t] plot ({cos(\x)}, {sin(\x)});
\draw[domain=180+\t:240-\t] plot ({cos(\x)}, {sin(\x)});
\draw[domain=240+\t:300-\t] plot ({cos(\x)}, {sin(\x)});
\draw[domain=300+\t:360-\t] plot ({cos(\x)}, {sin(\x)})
node[draw,circle, fill=black,inner sep=2pt] at (\t:1) {}
node[draw,circle, fill=white,inner sep=2pt] at (60-\t:1) {}
node[draw,circle, fill=white,inner sep=2pt] at (60+\t:1) {}
node[draw,circle, fill=black,inner sep=2pt] at (120-\t:1) {}
node[draw,circle, fill=black,inner sep=2pt] at (120+\t:1) {}
node[draw,circle, fill=white,inner sep=2pt] at (180-\t:1) {}
node[draw,circle, fill=white,inner sep=2pt] at (180+\t:1) {}
node[draw,circle, fill=black,inner sep=2pt] at (240-\t:1) {}
node[draw,circle, fill=black,inner sep=2pt] at (240+\t:1) {}
node[draw,circle, fill=white,inner sep=2pt] at (300-\t:1) {}
node[draw,circle, fill=white,inner sep=2pt] at (300+\t:1) {}
node[draw,circle, fill=black,inner sep=2pt] at (360-\t:1) {};

\end{tikzpicture}

\end{tabular}
\endgroup

\end{center}

A fundamental domain for the $W$-action on $\widetilde{X}$ is a line segment \ \begin{tikzpicture} 
\draw (0,0) -- (1,0)
node[draw,circle, fill=white,inner sep=2pt] at (0,0) {}
node[draw,circle, fill=black,inner sep=2pt] at (1,0) {};
\end{tikzpicture},
whose interior has trivial stabiliser, and with the stabilisers of the two endpoints being:

\begin{center}
\begingroup
\setlength{\tabcolsep}{20pt} 
\begin{tabular}{ccc}
Case (a) & Case (b) & Case (c) \\[5pt]

\begin{tikzpicture}[scale=1.5]
\draw (0,0) -- (1,0)
node[draw,circle, fill=white,inner sep=2pt] at (0,0) {}
node[draw,circle, fill=black,inner sep=2pt] at (1,0) {};
\node at (0,.3) {$\{1,s_1\}$};
\node at (1,.3) {$\{1,s_2\}$};
\end{tikzpicture}

&

\begin{tikzpicture}[scale=1.5]
\draw (0,0) -- (1,0)
node[draw,circle, fill=white,inner sep=2pt] at (0,0) {}
node[draw,circle, fill=black,inner sep=2pt] at (1,0) {};
\node at (0,.3) {$\{1,s_1\}$};
\node at (1,.3) {$\{1\}$};
\end{tikzpicture}

&

\begin{tikzpicture}[scale=1.5]
\draw (0,0) -- (1,0)
node[draw,circle, fill=white,inner sep=2pt] at (0,0) {}
node[draw,circle, fill=black,inner sep=2pt] at (1,0) {};
\node at (0,.3) {$\{1\}$};
\node at (1,.3) {$\{1\}$};
\end{tikzpicture}

\end{tabular}
\endgroup

\end{center}
(Here $s_1$ and $s_2$ are involutions in $W$.)

Since the stabilisers of the points of $\widetilde{X}$ all have order $\leq 2$, we can arrange that the cocycle $\gamma$ is trivial on all of the stabilisers (see \eqref{eq:gamma-inverse}), and hence ignore $\gamma$ for this computation. In case (a), the Bredon cohomology $H^*_W(\widetilde{X},\mathcal{W}^\gamma)$ is the cohomology of the  cochain complex (concentrated in degrees $0$ and $1$)
\begin{equation}\label{eq:1d-case2a-complex}
\Rep \{1,s_1\} \oplus \Rep \{1,s_2\} \xrightarrow{\res \oplus -\res} \Rep \{1\},
\end{equation}
where $\res$ denotes restriction of representations. We can decompose $\Rep\{1,s_i\} = \Z(\triv) \oplus \Z(\triv-\sign)$, where $\triv$ and $\sign$ are respectively the trivial and the nontrivial characters. Since $\res(\triv-\sign)=0$, the complex \eqref{eq:1d-case2a-complex} decomposes into the direct sum of the complexes
\[
\Z \oplus \Z \xrightarrow{\id \oplus -\id} \Z \qquad \text{and}\qquad \Z\oplus \Z \xrightarrow{0} 0,
\]
or in other words, the ordinary cellular cohomology complex of the space
\begin{center}
\begin{tikzpicture}[scale=1.5]
\draw (0,0) -- (1,0)
node[draw,circle, fill=white,inner sep=2pt] at (0,0) {}
node[draw,circle, fill=black,inner sep=2pt] at (1,0) {};
\node[draw,circle, fill=white,inner sep=2pt] at (0,.2) {};
\node[draw,circle, fill=black,inner sep=2pt] at (1,.2) {};
\end{tikzpicture}
\end{center}
We conclude that in case (a) we have  $H^0_W(\widetilde{X},\mathcal{W}^\gamma)\cong \Z^3$ while $H^1_W(\widetilde{X},\mathcal{W}^\gamma)=0$. A similar computation shows that in case (b) we have
\[
H^*_W(\widetilde{X},\mathcal{W}^\gamma) = H^*\left(\begin{tikzpicture}
\draw (0,0) -- (1,0)
node[draw,circle, fill=white,inner sep=2pt] at (0,0) {}
node[draw,circle, fill=black,inner sep=2pt] at (1,0) {};
\node[draw,circle, fill=white,inner sep=2pt] at (0,.25 ) {};
\end{tikzpicture}
, \Z \right) = \begin{cases} \Z^2 & (\ast=0)\\ 0 & \text{(otherwise.)}\end{cases}
\]
In case (c) the $W$-action on $\widetilde{X}$ is free, so in this case
\[
H^*_W(\widetilde{X},\mathcal{W}^\gamma) = H^*\left(\begin{tikzpicture}
\draw (0,0) -- (1,0)
node[draw,circle, fill=white,inner sep=2pt] at (0,0) {}
node[draw,circle, fill=black,inner sep=2pt] at (1,0) {};
\end{tikzpicture}
, \Z \right) = \begin{cases} \Z & (\ast=0)\\ 0 & \text{(otherwise.)}\end{cases}
\]

In summary:

\begin{corollary}\label{cor:dim-1}
If the torus $X_M$ of unramified unitary characters of $M$ has dimension $1$, then for each discrete-series representation $\sigma$ of $M$ the pair of $K$-theory groups $(K_0(C^*_r(G)_{(M,\sigma)}), K_1(C^*_r(G)_{(M,\sigma)}))$ is one of the following:
\[
(\Z,\Z),\quad (\Z^3,0), \quad (\Z^2,0),\quad (\Z,0).
\]
\hfill\qed
\end{corollary}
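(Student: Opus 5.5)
The plan is to assemble the case analysis of Section \ref{sec:1d} using Corollary \ref{cor:dimension-2}, which applies since $\dim X=1\le 2$. That corollary gives
\[
K_0\cong H^0_W(\widetilde{X},\mathcal{W}^\gamma)\oplus H^2_W(\widetilde{X},\mathcal{W}^\gamma),\qquad K_1\cong H^1_W(\widetilde{X},\mathcal{W}^\gamma).
\]
Because $\widetilde{X}$ is a one-dimensional $W$-CW-complex, $H^2$ vanishes. So it suffices to compute $H^0_W$ and $H^1_W$ in each possible configuration.

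First I split according to whether some $s$-twist of $\sigma$ is an unramified twist of $\sigma$.

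In Case 1, $W=X_\sigma$ acts freely by translations. Hence $X'=\emptyset$, $\widetilde{X}=X$, and every stabiliser is trivial, so $\gamma$ restricts trivially to stabilisers. Bredon cohomology with the coefficient system $\Rep(\{1\})=\Z$ on a free complex is then ordinary cohomology of $X/X_\sigma$, which is a circle. This gives $(\Z,\Z)$.

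In Case 2, I twist $\sigma$ by a square root of $\chi$ so that ${}^s\sigma\cong\sigma$; this changes neither the summand nor its $K$-theory. Then $W=X_\sigma\rtimes\langle s\rangle$ is dihedral. Its non-free points form the two orbits $\mathcal{O}_\bullet,\mathcal{O}_\circ$, each with stabiliser of order $2$. Since $X'$ is $W$-invariant and contained in the non-free locus, it is a union of some of these orbits, giving subcases (a), (b), (c) up to symmetry. In each subcase, Lemma \ref{lem:Wprime-simply-transitive} and Lemma \ref{lem:W_chi-splitting} describe $\widetilde{X}$. A fundamental domain is a segment with free interior. An endpoint has stabiliser of order $2$ exactly when it lies over a point not in $X'$, and is free otherwise.

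Using \eqref{eq:gamma-inverse}, I may take $\gamma$ trivial on each order-$2$ stabiliser. Then $\Rep_\gamma$ of such a stabiliser is $\Z\triv\oplus\Z(\triv-\sign)$, and the class $\triv-\sign$ restricts to zero on the trivial group. The cellular Bredon complex therefore splits as the ordinary cochain complex of a segment plus one isolated $\Z$ for each non-free endpoint orbit. This yields $H^0=\Z^3,\Z^2,\Z$ and $H^1=0$ in cases (a), (b), (c) respectively.

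The main point needing care is the claim that these subcases are exhaustive and correctly describe $\widetilde{X}$. This includes the degenerate dihedral groups with $|X_\sigma|=1$ or $2$, where the two orbits might behave differently. I would check that here the orbits $\mathcal{O}_\bullet,\mathcal{O}_\circ$ are still distinct $X_\sigma$-torsors, namely the solutions of $\chi^2\in X_\sigma$ split into two cosets. Then the equivariant CW-structure, with two $0$-cell orbits and one free $1$-cell orbit, is unchanged, and the same computation goes through.
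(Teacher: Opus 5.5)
Your proposal is correct and follows essentially the same route as the paper. Like the paper, you split into the free case $W=X_\sigma$, which gives the circle $X/X_\sigma$ and hence $(\Z,\Z)$, and the dihedral case with subcases (a)--(c) according to which of $\mathcal{O}_\bullet,\mathcal{O}_\circ$ lie in $X'$, which gives $H^0=\Z^3,\Z^2,\Z$ and $H^1=0$ via $\Rep\{1,s_i\}=\Z(\triv)\oplus\Z(\triv-\sign)$, and you then conclude with Corollary~\ref{cor:dimension-2}. The only difference is that you obtain the endpoint stabilisers in $\widetilde{X}$ from Lemma~\ref{lem:W_chi-splitting}, whereas the paper reads them off its explicit pictures of $\widetilde{\germ{a}_M^*}$; your version is a slightly cleaner way to get the same data.
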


\subsection{\texorpdfstring{$\Sp_4(F)$}{Sp4(F)}}\label{sec:Sp4}

In this section we will compute $K_*(C^*_r(G)_{(M,\sigma)})$ for all possible pairs $(M,\sigma)$, where $G$ is the symplectic group $\Sp_4(F)$. We use the following coordinates for $G$:
\[
G = \{g\in \GL_4(F)\ |\ g^{\transpose}Jg=J\},
\]
where $J\smallbmat{a\\b\\c\\d}=\smallbmat{c\\d\\-a\\-b}$, and where $\transpose$ denotes the transpose. 

Up to conjugacy there are four possibilities for the Levi subgroup $M$, which we shall consider in turn: $M=G$; $M\cong \GL_2(F)$; $M\cong \SL_2(F)\times \GL_1(F)$; and $M\cong \GL_1(F)\times \GL_1(F)$. In each of these cases the torus $X_M$ has dimension $\leq 2$, so Corollary \ref{cor:dimension-2} applies: that is, the spectral sequence computing $K_*(C^*_r(G)_{(M,\sigma)})$ collapses (already over $\Z$) at the $E^2$ page, giving an identification of the $K$-theory groups in terms of the Bredon cohomology of the associated space $\widetilde{X}$. As explained in Remark \ref{rem:gamma}, we can (and will) assume in all of these computations that the $2$-cocycle $\gamma$ is trivial.

\subsubsection{\texorpdfstring{$M=\Sp_4(F)$}{M=Sp4(F)}}

Since the group $G$ is semisimple, its only unramified character is the trivial one. So for each discrete-series representation $\sigma$ of $G$ we have $C^*_r(G)_{(G,\sigma)}\cong \Compact(H_\sigma)$, and so $K_0(C^*_r(G)_{(G,\sigma)})\cong \Z$ and $K_1(C^*_r(G)_{(G,\sigma)})=0$.

\subsubsection{\texorpdfstring{$M\cong \GL_2(F)$ and $M\cong \SL_2(F)\times \GL_1(F)$}{M = GL2(F) and M = SL2(F) x GL1(F)}}

In each of these cases we have $X\cong \T$, so the analysis of Section \ref{sec:1d} shows that the pair $(K_0,K_1)$ for the $C^*$-algebra $C^*_r(G)_{(M,\sigma)}$ is $(\Z,\Z)$, $(\Z^3,0)$, $(\Z^2,0)$, or $(\Z,0)$.

\subsubsection{\texorpdfstring{$M\cong \GL_1(F)\times \GL_1(F)$}{M = GL1(F) x GL1(F)}}

Let
\[
M = \left\{ g_{(a,b)}\coloneqq \smallbmat{a & & & \\ & b & & \\ & &  a^{-1} & \\ & & & b^{-1}}\ \middle|\ a,b\in F^\times\right\}.
\]
(Empty spaces in matrices indicate zeros.) We choose a uniformiser $q\in F$, and let $\mathcal{O}$ denote the ring of integers of $F$. In this case we have $M=A_M$. The map sending $(n,m)\in \Z^2$ to the rational character $g_{(a,b)}\mapsto a^n b^m$ of $M$ gives an isomorphism $\Hom_F(M,F^\times)\cong \Z^2$. For notational convenience we will rescale the coordinates in $\germ{a}_M^*=\Hom_F(M,F^\times)\otimes_{\Z} \R\cong \R^2$ so that the covering map $\Theta:\germ{a}_M^*\to X$ sends each point $(x,y)\in \R^2$ to the unramified character
\[
\Theta(x,y): g_{(aq^n,bq^m)}\mapsto \exp(\pi i(xn+ym)),
\]
for $a,b\in \mathcal{O}^\times$ and $n,m\in \Z$. In these coordinates we have $\Lambda=(2\Z)^2 \subset \R^2$.

The Weyl group $W_M=N_G(M)/M$ is generated by the automorphisms
\[
s: g_{(a,b)}\mapsto g_{(b,a)}\qquad \text{and}\qquad t:g_{(a,b)}\mapsto g_{(a,b^{-1})}
\]
of $M$. Letting $-1$ denote the element $(st)^2\in W_M$, we have $W_M=\{\pm 1, \pm s, \pm t,\pm st\}$. The action of $W_M$ on $\germ{a}_M^*\cong \R^2$ is given by $s\mapsto \smallbmat{& 1\\ 1&}$ and $t\mapsto \smallbmat{1 & \\ & -1}$. Note that this mapping sends $-1\in W_M$ to $-\id_{\R^2}$. The root hyperplanes $V_\rho\subset \germ{a}_M^*$ are the lines $x=0$, $y=0$, $x=y$, and $x=-y$.

Since $M$ is abelian the discrete-series representations of $M$ are precisely the unitary characters $M\to \T$, and two such characters are equivalent if and only if they are equal. Thus we have
\[
E^2(M)=\left\{ \sigma=(\sigma_1,\sigma_2):g_{a,b}\mapsto \sigma_1(a)\sigma_2(b)\ \middle|\ \sigma_1,\sigma_2\in\widehat{F^\times}\right\}.
\]
In the action of $W_M$ on $E^2(M)$ the generators $s,t\in W_M$ act by ${}^s(\sigma_1,\sigma_2)=(\sigma_2,\sigma_1)$ and ${}^t(\sigma_1,\sigma_2)=(\sigma_1,\overline{\sigma_2})$. 

Every $X$-orbit in $E^2(M)$ contains a unique character from the set
\[
E^2(M)^\circ\coloneqq \{\sigma=(\sigma_1,\sigma_2)\in E^2(M)\ |\ \sigma_1(q)=\sigma_2(q)=1\} \cong \widehat{\mathcal{O}^\times}\times \widehat{\mathcal{O}^\times}.
\]
The action of $W_M$ on $E^2(M)$ stabilises the subset $E^2(M)^\circ$, and so for each $\sigma\in E^2(M)^\circ$ the group $W$ of Definition \ref{def:W} is given by
\[
W=(W_M)_\sigma = \{w\in W_M\ |\ {}^w\sigma=\sigma\}.
\]
Each $W_M$-orbit in $E^2(M)^\circ$ contains a character $\sigma$ falling into one of the following eight cases. (In this table `order' refers to the order of elements in the group $\widehat{\mathcal{O}^\times}$.)

\begin{center}
\def\arraystretch{1.2}
\begin{tabular}{c|l|c}
Case\# & Condition on $\sigma=(\sigma_1,\sigma_2)$ & $W$ \\
\hline
1 & $\sigma_1=\sigma_2=1$ & $W_M$ \\
2 & $\sigma_1=\sigma_2$, $\order(\sigma_i)=2$ & $W_M$  \\
3 & $\sigma_1=\sigma_2$, $\order(\sigma_i)>2$ & $\{1,s\}$  \\
4 & $\order(\sigma_1)=2$, $\sigma_2=1$   & $\{\pm 1,\pm t\}$ \\
5 & $\sigma_1\neq \sigma_2$, $\order(\sigma_i)=2$ & $\{\pm 1,\pm t\}$  \\
6 & $\order(\sigma_1)>2$, $\sigma_2=1$   & $\{1,t\}$  \\
7 & $\order(\sigma_1)>2$, $\order(\sigma_2)=2$ &  $\{1,t\}$ \\
8 & $\sigma_1\neq \sigma_2$, $\order(\sigma_i)>2$  & $\{1\}$
\end{tabular}
\end{center}

Now fix a $\sigma\in E^2(M)^\circ$. A theorem of Keys \cite[Theorem 1, p.364]{Keys} allows us to identify the group $W'_\chi$ for each unramified character $\chi\in X$. We write $\chi=(\chi_1,\chi_2)$, where $\chi_1,\chi_2\in F^\times\to \T$ are the unramified characters for which $\chi(g_{(a,b)})=\chi_1(a)\chi_2(b)$. The group $W'_\chi$ is generated by those reflections $\pm s,\pm t$ that it contains. Computing using Keys's result, we find the following criteria:

\begin{equation}\label{eq:r-table}
\def\arraystretch{1.2}
\begin{tabular}{c|l}
Reflection $r$ & $r\in W'_\chi$ if and only if: \\
\hline
$\phantom{-}s$ & $\sigma_1=\sigma_2$ and $\chi_1=\chi_2$ \\
$-s$ & $\sigma_1=\overline{\sigma_2}$ and $\chi_1=\overline{\chi_2}$\\
$\phantom{-}t$ & $\sigma_2=1$ and $\chi_2=1$\\
$-t$ & $\sigma_1=1$ and $\chi_1=1$\\
\end{tabular}
\end{equation}

To proceed further we examine each of the eight cases in turn; see the details below. The results are:

\begin{center}
\def\arraystretch{1.2}
\begin{tabular}{c|c|c|c}
Case\# & $H^0_W(\widetilde{X},\mathcal{W}^\gamma)$ & $H^1_W(\widetilde{X},\mathcal{W}^\gamma)$ & $H^2_W(\widetilde{X},\mathcal{W}^\gamma)$ \\
\hline
1 & $\Z^2$ & $0$ & $0$ \\
2 & $\Z^4$ & $0$ & $0$ \\
3 & $\Z\phantom{^0}$ & $\Z$ & $0$ \\
4 & $\Z^6$ & $0$ & $0$ \\
5 & $\Z^9$ & $0$ & $0$ \\
6 & $\Z^2$ & $\Z^2$ & $0$ \\
7 & $\Z^3$ & $\Z^3$ & $0$ \\
8 & $\Z\phantom{^0}$ & $\Z^2$ & $\Z$ 
\end{tabular}
\end{center}

Recall that corollary \ref{cor:dimension-2} implies that that 
\[
K_0(C^*_r(G)_{(M,\sigma)})\cong H^0_W(\widetilde{X},\mathcal{W}^\gamma)\oplus H^2_W(\widetilde{X},\mathcal{W}^\gamma),\quad  K_1(C^*_r(G)_{(M,\sigma)})\cong H^1_W(\widetilde{X},\mathcal{W}^\gamma),
\]
so the table above gives the complete list of possibilities for the $K$-theory of $C^*_r(G)_{(M,\sigma)}$ when $M$ is minimal.

\paragraph{Case 1: $\sigma$ is trivial.}

In this case we have $W=W_M$. To identify the space $\widetilde{X}$, note that all of the conditions on $\sigma$ in table \eqref{eq:r-table} are satisfied, and so $(\germ{a}_M^*)'$ is the union of the translates, by the lattice $\Lambda = (2\Z)^2$, of the root hyperplanes $x=0$, $y=0$, $x=y$, and $x=-y$ in $\germ{a}_M^*\cong \R^2$. Removing these lines from $\germ{a}_M^*$ leaves a disjoint union of open triangles, and $\widetilde{\germ{a}_M^*}$ is the disjoint union of the closures of these triangles. In a picture, showing the square $[-2,2]^2$ in $\R^2\cong \germ{a}_M^*$:

\begin{center}
\begin{tabular}{ccc}
$\germ{a}_M^*\setminus (\germ{a}_M^*)'$ &  & $\widetilde{\germ{a}_M^*}$ \\ \\
\begin{tikzpicture}[dotted,thick]
\draw (-2,-2) -- (-2,2);
\draw (0, -2) -- (0, 2);
\draw (2, -2) -- (2, 2);
\draw (-2,-2) -- (2, -2);
\draw (-2, 0) -- (2, 0);
\draw (-2,2) -- (2,2);

\draw (-2,-2) -- (2,2);
\draw (-2,0) -- (0,2);
\draw (0,-2) -- (2,0);

\draw (-2,2) -- (2,-2);
\draw (-2,0) -- (0,-2);
\draw (0,2) -- (2,0);
\end{tikzpicture}
&
\qquad\qquad
&
\begin{tikzpicture}
\def\t{.15}
\foreach \x in {-2,0}{
\foreach \y in {-2,0}{
\draw (\x+\t*0.9239,\y+\t*0.3827) -- (\x+1,\y+\t*0.3827+1-\t*0.9239) -- (\x+2-\t*0.9239 , \y+\t*0.3827) -- (\x+\t*0.9239,\y+\t*0.3827);

 \draw (\x+\t*0.3827,\y+\t*0.9239) -- (\x+\t*0.3827+1-\t*0.9239,\y+1) -- ( \x+\t*0.3827, \y+2-\t*0.9239 ) -- (\x+\t*0.3827, \y+\t*0.9239);

 \draw (\x+\t*0.9239,\y+2-\t*0.3827) -- (\x+1,\y+2-\t*0.3827-1+\t*0.9239) -- (\x+2-\t*0.9239 ,\y+ 2-\t*0.3827) -- (\x+\t*0.9239,\y+2-\t*0.3827);

 \draw (\x+2-\t*0.3827,\y+\t*0.9239) -- (\x+2-\t*0.3827-1+\t*0.9239,\y+1) -- ( \x+2-\t*0.3827, \y+2-\t*0.9239 ) -- (\x+2-\t*0.3827, \y+\t*0.9239);

}}

\end{tikzpicture}

\end{tabular}
\end{center}

The part of the picture  inside the square $[0,2]^2$ is a fundamental domain for the $\Lambda$ action, so we arrive at the following picture of $\widetilde{X}$:
\begin{equation}\label{eq:Xtilde-example}
\begin{tikzpicture}[baseline=(current bounding box.center),scale=1.5]
\def\t{.2}
\foreach \x in {0}{
\foreach \y in {0}{
\draw (\x+\t*0.9239,\y+\t*0.3827) -- (\x+1,\y+\t*0.3827+1-\t*0.9239) -- (\x+2-\t*0.9239 , \y+\t*0.3827) -- (\x+\t*0.9239,\y+\t*0.3827);

 \draw (\x+\t*0.3827,\y+\t*0.9239) -- (\x+\t*0.3827+1-\t*0.9239,\y+1) -- ( \x+\t*0.3827, \y+2-\t*0.9239 ) -- (\x+\t*0.3827, \y+\t*0.9239);

 \draw (\x+\t*0.9239,\y+2-\t*0.3827) -- (\x+1,\y+2-\t*0.3827-1+\t*0.9239) -- (\x+2-\t*0.9239 ,\y+ 2-\t*0.3827) -- (\x+\t*0.9239,\y+2-\t*0.3827);

 \draw (\x+2-\t*0.3827,\y+\t*0.9239) -- (\x+2-\t*0.3827-1+\t*0.9239,\y+1) -- ( \x+2-\t*0.3827, \y+2-\t*0.9239 ) -- (\x+2-\t*0.3827, \y+\t*0.9239);

}}

\end{tikzpicture}
\end{equation}
Note that none of the edges in this picture are glued together; $\widetilde{X}$ is the disjoint union of four $2$-simplices.

In the action of $W=W_M$ on $\widetilde{X}$, the element $s$ reflects across the diagonal axis from bottom-left to top-right in \eqref{eq:Xtilde-example}, while $t$ reflects across the horizontal axis through the centre of the picture. There is a natural $W$-invariant CW-structure:
\begin{equation}\label{eq:Xtilde-example2}
\begin{tikzpicture}[baseline=(current bounding box.center),scale=1.5]
\def\t{.3}

\draw (\t*0.9239,\t*0.3827) -- (1,\t*0.3827+1-\t*0.9239) -- (2-\t*0.9239 , \t*0.3827) -- (\t*0.9239,\t*0.3827);

 \draw (\t*0.3827,\t*0.9239) -- (\t*0.3827+1-\t*0.9239,1) -- ( \t*0.3827, 2-\t*0.9239 ) -- (\t*0.3827, \t*0.9239);

 \draw (\t*0.9239,2-\t*0.3827) -- (1,2-\t*0.3827-1+\t*0.9239) -- (2-\t*0.9239 , 2-\t*0.3827) -- (\t*0.9239,2-\t*0.3827);

 \draw (2-\t*0.3827,\t*0.9239) -- (2-\t*0.3827-1+\t*0.9239,1) -- ( 2-\t*0.3827, 2-\t*0.9239 ) -- (2-\t*0.3827, \t*0.9239);

\node[draw,circle,fill,inner sep=1.5pt] at (\t*0.9239,\t*0.3827) {};

\node[draw,circle,fill,inner sep=1.5pt] at (1,\t*0.3827+1-\t*0.9239) {};

\node[draw,circle,fill,inner sep=1.5pt] at (2-\t*0.9239 , \t*0.3827) {};

\node[draw,circle,fill,inner sep=1.5pt] at (1 , \t*0.3827) {};

\draw (1,\t*0.3827+1-\t*0.9239) -- (1 , \t*0.3827);

\node[draw,circle,fill,inner sep=1.5pt] at (\t*0.3827,\t*0.9239) {};

\node[draw,circle,fill,inner sep=1.5pt] at (\t*0.3827+1-\t*0.9239,1) {};

\node[draw,circle,fill,inner sep=1.5pt] at ( \t*0.3827, 2-\t*0.9239 ) {};

\node[draw,circle,fill,inner sep=1.5pt] at (\t*0.3827,1) {};

\draw (\t*0.3827+1-\t*0.9239,1) -- (\t*0.3827,1);

\node[draw,circle,fill,inner sep=1.5pt] at (\t*0.9239,2-\t*0.3827) {};

\node[draw,circle,fill,inner sep=1.5pt] at (1,2-\t*0.3827-1+\t*0.9239) {};

\node[draw,circle,fill,inner sep=1.5pt] at (2-\t*0.9239 , 2-\t*0.3827) {};

\node[draw,circle,fill,inner sep=1.5pt] at (1 , 2-\t*0.3827) {};

\draw (1,2-\t*0.3827-1+\t*0.9239) -- (1 , 2-\t*0.3827);

\node[draw,circle,fill,inner sep=1.5pt] at (2-\t*0.3827,\t*0.9239) {};

\node[draw,circle,fill,inner sep=1.5pt] at (2-\t*0.3827-1+\t*0.9239,1) {};

\node[draw,circle,fill,inner sep=1.5pt] at ( 2-\t*0.3827, 2-\t*0.9239 ) {};

\node[draw,circle,fill,inner sep=1.5pt] at (2-\t*0.3827,1) {};

\draw (2-\t*0.3827-1+\t*0.9239,1) -- (2-\t*0.3827,1);

\end{tikzpicture}
\end{equation}

The top half of the right-hand triangle in  \eqref{eq:Xtilde-example2} is a fundamental domain for the action of $W$. The stabilisers of the vertices and edges in this triangle are indicated in the picture below:

\begin{center}
\begin{tikzpicture}
\node[draw,fill,circle,inner sep=2pt] at (0,0) {};
\node[draw,fill,circle,inner sep=2pt] at (2,2) {};
\node[draw,fill,circle,inner sep=2pt] at (2,0) {};
\draw (0,0) -- (2,2) -- (2,0) -- (0,0);
\node at (-.75,0) {$\{1,t\}$};
\node at (2.6,2) {$\{1\}$};
\node at (2.75,0) {$\{1,t\}$};
\node at (2.6,1) {$\{1\}$};
\node at (1,-.3) {$\{1,t\}$};
\node at (.7,1.3) {$\{1\}$};
\end{tikzpicture}
\end{center}
(The stabiliser of the interior $2$-cell is $\{1\}$.)
Decomposing $\Rep\{1,t\} = \Z(\triv) \oplus \Z(\triv-\sign)$ as explained in Section \ref{sec:1d} gives an identification between the complex computing the Bredon cohomology of $\widetilde{X}$  and the complex of ordinary cellular cochains for the following space:
\begin{center}
\begin{tikzpicture}
\node[draw,fill,circle,inner sep=2pt] at (0,0) {};
\node[draw,fill,circle,inner sep=2pt] at (2,2) {};
\node[draw,fill,circle,inner sep=2pt] at (2,0) {};
\draw (0,0) -- (2,2) -- (2,0) -- (0,0);

\node[draw,fill,circle,inner sep=2pt] at (0,-.3) {};
\node[draw,fill,circle,inner sep=2pt] at (2,-.3) {};
\draw (0,-.3) -- (2,-.3);

\end{tikzpicture}
\end{center}
We conclude that $H^0_W(\widetilde{X},\mathcal{W})\cong \Z^2$, while $H^1_W(\widetilde{X},\mathcal{W}) = H^2_W(\widetilde{X},\mathcal{W})=0$.

\paragraph{Case 2: $\sigma_1=\sigma_2$ and $\order(\sigma_i)=2$.}

We have $W=W_M$. Consulting \eqref{eq:r-table} shows that $(\germ{a}_M^*)'$ is the union of the $(2\Z)^2$-translates of the root hyperplanes $y=x$ and $y=-x$ in the space $\germ{a}_M^*\cong \R^2$. Removing these lines from $\germ{a}_M^*$ leaves a disjoint union of open squares, and $\widetilde{\germ{a}_M^*}$ is the disjoint union of the closures of these squares:

\begin{center}
\begin{tabular}{ccc}
$\germ{a}_M^*\setminus (\germ{a}_M^*)'$ &  & $\widetilde{\germ{a}_M^*}$ \\ \\
\begin{tikzpicture}[dotted,thick]

\draw (-2,-2) -- (2,2);
\draw (-2,0) -- (0,2);
\draw (0,-2) -- (2,0);

\draw (-2,2) -- (2,-2);
\draw (-2,0) -- (0,-2);
\draw (0,2) -- (2,0);
\end{tikzpicture}
&
\qquad\qquad
&
\begin{tikzpicture}
\def\t{.15}
\draw[fill=gray] (0+\t,0) -- (1,1-\t) -- (2-\t,0) -- (1,-1+\t) -- (0+\t,0);
\draw (0,\t) -- (1-\t,1) -- (0,2-\t) -- (-1+\t,1) -- (0,\t);
\draw (-\t,0) -- (-1,1-\t) -- (-2+\t,0) -- (-1,-1+\t) -- (-\t,0);
\draw[fill=gray] (0,-\t) -- (1-\t,-1) -- (0,\t-2) -- (-1+\t,-1) -- (0, -\t);
\draw (2,\t) -- (1+\t,1) -- (2,2-\t);
\draw (\t,2) -- (1,1+\t) -- (2-\t,2);
\draw (-\t,2) -- (-1,1+\t) -- (\t-2,2);
\draw (-2,\t) -- (-1-\t,1) -- (-2,2-\t);
\draw (2,-\t) -- (1+\t,-1) -- (2,-2+\t);
\draw (\t,-2) -- (1,-1-\t) -- (2-\t,-2);
\draw (-\t,-2) -- (-1,-1-\t) -- (\t-2,-2);
\draw (-2,-\t) -- (-1-\t,-1) -- (-2,-2+\t);

\end{tikzpicture}

\end{tabular}
\end{center}

The two shaded squares in the picture of $\widetilde{\germ{a}_M^*}$ constitute a fundamental domain for the $\Lambda$-action, so we can identify $\widetilde{X}$ with those two squares. 

In the action of $W=\{\pm 1, \pm s, \pm t, \pm st\}$ on $\widetilde{X}$, the elements $\pm s$ and $\pm st$ exchange the two squares, while the group $\{\pm 1,\pm t\}$ acts on each square: $-1$ by a $180^\circ$ rotation, $t$ by reflecting across the horizontal axis, and $-t$ by reflecting across the vertical axis. The upper-right-hand quadrant of one of the squares is a fundamental domain for the $W$-action. This fundamental domain is a $2$-simplex, whose vertex- and edge-stabiliser groups are indicated below:

\begin{center}
\begin{tikzpicture}
\node[draw,fill,circle,inner sep=2pt] at (0,0) {};
\node[draw,fill,circle,inner sep=2pt] at (0,2) {};
\node[draw,fill,circle,inner sep=2pt] at (2,0) {};
\draw (0,0) -- (0,2) -- (2,0) -- (0,0);
\node at (-1,0) {$\{\pm 1,\pm t\}$};
\node at (-.85,2) {$\{1,-t\}$};
\node at (2.75,0) {$\{1,t\}$};
\node at (-0.85,1) {$\{1,-t\}$};
\node at (1,-.3) {$\{1,t\}$};
\node at (1.3,1.3) {$\{1\}$};
\end{tikzpicture}
\end{center}
(The stabiliser of the interior $2$-cell is $\{1\}$.)

We write $\widehat{\{\pm 1,\pm t\}}= \{\triv,\sign,\rho_+,\rho_-\}$, where $\triv$ is the trivial character, $\sign(\pm t)=-1$, $\rho_+(\pm t)=\pm 1$, and $\rho_-(\pm t)=\mp 1$. We  decompose $\Rep\{\pm1,\pm t\}$ as follows:
\[
\Rep\{\pm 1,\pm t\}=\Z(\triv) \oplus \Z(\triv-\rho_+)\oplus \Z(\triv-\rho_-) \oplus \Z(\triv+\sign-\rho_+ - \rho_-).
\]
This decomposition, together with the decomposition of $\Rep\{1,t\}$ and $\Rep\{1,-t\}$ used in Case 1, gives an identification between the Bredon cohomology complex of $\widetilde{X}$ and the simplicial cohomology complex of the following space:

\begin{center}
\begin{tikzpicture}
\node[draw,fill,circle,inner sep=2pt] at (0,0) {};
\node[draw,fill,circle,inner sep=2pt] at (0,2) {};
\node[draw,fill,circle,inner sep=2pt] at (2,0) {};
\draw (0,0) -- (0,2) -- (2,0) -- (0,0);

\node[draw,fill,circle,inner sep=2pt] at (0,-.3) {};
\node[draw,fill,circle,inner sep=2pt] at (2,-.3) {};
\draw (0,-.3) -- (2,-.3);

\node[draw,fill,circle,inner sep=2pt] at (-.3,0) {};
\node[draw,fill,circle,inner sep=2pt] at (-.3,2) {};
\draw (-.3,0) -- (-.3,2);
\node[draw,fill,circle,inner sep=2pt] at (-.3,-.3) {};
\end{tikzpicture}
\end{center}
We thus conclude that the cohomology is isomorphic to $\Z^4$ in degree zero, and $0$ in degree $\geq 1$.

\paragraph{Case 3: $\sigma_1=\sigma_2$ and $\order(\sigma_i)>2$.}

We have $W=\{1,s\}$. Consulting \eqref{eq:r-table} shows that $(\germ{a}_M^*)'$ is the union of the $(2\Z)^2$-translates of the root hyperplane $y=x$ in the space $\germ{a}_M^*\cong \R^2$. Removing these lines from $\germ{a}_M^*$ leaves a disjoint union of open diagonal strips, and $\widetilde{\germ{a}_M^*}$ is the disjoint union of the closures of these strips.

The lattice $\Lambda=(2\Z)^2$ can be decomposed as $\Z(2,0)\oplus \Z(2,2)$, where the generator $(2,0)$ acts on $\widetilde{\germ{a}_M^*}$ by moving each diagonal strip onto a neighboring strip, while the generator $(2,2)$ acts by a translation in each strip. The quotient space $\widetilde{X}=\widetilde{\germ{a}_M^*}/\Lambda$ is thus a cylinder. The nontrivial element $s\in W$ acts  by rotating this cylinder $180^\circ$ around its axis of rotational symmetry, and then reflecting in the plane perpendicular to that axis. This operation has no fixed points, so in this case the action of $W$ on $\widetilde{X}$ is free, and the Bredon cohomology of $\widetilde{X}$ is  the ordinary cellular cohomology of the quotient $\widetilde{X}/W$. This quotient is a M\"obius strip, and so we conclude that the Bredon cohomology of $\widetilde{X}$ is isomorphic to $\Z$ in degrees $0$ and $1$, and is $0$ in higher degrees.

\paragraph{Case 4: $\order(\sigma_1)=2$, $\sigma_2=1$.}

We have $W=\{\pm 1,\pm t\}$. Consulting \eqref{eq:r-table} shows that $(\germ{a}_M^*)'$ is the union of the lines $y=2n$, for $n\in \Z$, in the space $\germ{a}_M^*\cong \R^2$. Removing these lines from $\germ{a}_M^*$ leaves a disjoint union of open horizontal strips, and $\widetilde{\germ{a}_M^*}$ is the disjoint union of the closures of these strips. The quotient $\widetilde{X}=\widetilde{\germ{a}_M^*}/\Lambda$ is a closed cylinder, obtained by gluing together the two vertical sides of the square $[0,2]^2$. The elements $t$ and $-t$ in $W$ act on $\widetilde{X}$ by reflection through the lines $y=1$ and $x=1$, respectively. The (image in the cylinder of the) square $[0,1]^2$ is a fundamental domain for the this action. The stabilisers of the vertices and edges of this fundamental domain are indicated below:
\begin{center}
\begin{tikzpicture}[scale=1]
\node[draw,circle,fill,inner sep=2pt] at (0,0) {};
\node[draw,circle,fill,inner sep=2pt] at (0,2) {};
\node[draw,circle,fill,inner sep=2pt] at (2,0) {};
\node[draw,circle,fill,inner sep=2pt] at (2,2) {};

\draw (0,0) -- (0,2) -- (2,2) -- (2,0) -- (0,0);

\node at (-1,0) {$\{1,-t\}$};
\node at (-1,2) {$\{\pm 1,\pm t\}$};
\node at (-1,1) {$\{1,-t\}$};
\node at (3,0) {$\{1,-t\}$};
\node at (3,1) {$\{1,-t\}$};
\node at (3,2) {$\{\pm 1,\pm t\}$};
\node at (1,-.5) {$\{1\}$};
\node at (1,2.5) {$\{1,t\}$};

\end{tikzpicture}
\end{center}
(The stabiliser of the interior $2$-cell is $\{1\}$.)

Decomposing $\Rep\{1,t\}$, $\Rep\{1,-t\}$, and $\Rep\{\pm 1,\pm t\}$ as in Case 2 gives an identification between the Bredon cohomology complex for this square, and the ordinary cellular cohomology complex for the space

\begin{center}
\begin{tikzpicture}[scale=1]
\node[draw,circle,fill,inner sep=2pt] at (0,0) {};
\node[draw,circle,fill,inner sep=2pt] at (0,2) {};
\node[draw,circle,fill,inner sep=2pt] at (2,0) {};
\node[draw,circle,fill,inner sep=2pt] at (2,2) {};
\draw (0,0) -- (0,2) -- (2,2) -- (2,0) -- (0,0);

\node[draw,circle,fill,inner sep=2pt] at (-.3,0) {};
\node[draw,circle,fill,inner sep=2pt] at (-.3,2) {};
\draw (-.3,0) -- (-.3,2);

\node[draw,circle,fill,inner sep=2pt] at (0,2.3) {};
\node[draw,circle,fill,inner sep=2pt] at (2,2.3) {};
\draw (0,2.3) -- (2,2.3);

\node[draw,circle,fill,inner sep=2pt] at (2.3,2) {};
\node[draw,circle,fill,inner sep=2pt] at (2.3,0) {};
\draw (2.3,2) -- (2.3,0);

\node[draw,circle,fill,inner sep=2pt] at (-.3,2.3) {};

\node[draw,circle,fill,inner sep=2pt] at (2.3,2.3) {};

\end{tikzpicture} 

\end{center}

The Bredon cohomology is thus isomorphic to $\Z^6$ in degree $0$, and $0$ in higher degrees.

\paragraph{Case 5: $\sigma_1\neq \sigma_2$ and $\order(\sigma_1)=\order(\sigma_2)=2$.}

We have $W=\{\pm 1,\pm t\}$. None of the conditions on $\sigma$ in \eqref{eq:r-table} are satisfied, and so we have $(\germ{a}_M^*)'=\emptyset$, $\widetilde{\germ{a}_M^*}=\germ{a}_M^*$, and $\widetilde{X}=X$. Realising the torus $X$ as the result of identifying opposite sides in the square $[0,2]^2\subseteq \R^2\cong \germ{a}_M^*$, the symmetry $t\in W$ acts by reflection across the line $y=1$, while the symmetry $-t\in W$ acts by reflection across the line $x=1$. The (image in the torus of the) square $[0,1]^2$ is a fundamental domain for this action. Labelling each vertex and edge of this square by its stabiliser in $W$, we have

\begin{center}
\begin{tikzpicture}[scale=1]
\node[draw,circle,fill,inner sep=2pt] at (0,0) {};
\node[draw,circle,fill,inner sep=2pt] at (0,2) {};
\node[draw,circle,fill,inner sep=2pt] at (2,0) {};
\node[draw,circle,fill,inner sep=2pt] at (2,2) {};

\draw (0,0) -- (0,2) -- (2,2) -- (2,0) -- (0,0);

\node at (-1,0) {$\{\pm1,\pm t\}$};
\node at (-1,2) {$\{\pm 1,\pm t\}$};
\node at (-1,1) {$\{1,-t\}$};
\node at (3,0) {$\{\pm 1,\pm t\}$};
\node at (3,1) {$\{1,-t\}$};
\node at (3,2) {$\{\pm 1,\pm t\}$};
\node at (1,-.5) {$\{1,t\}$};
\node at (1,2.5) {$\{1,t\}$};

\end{tikzpicture}
\end{center}
(The stabiliser of the interior $2$-cell is $\{1\}$.) Decomposing $\Rep\{1,t\}$, $\Rep\{1,-t\}$, and $\Rep\{\pm 1,\pm t\}$ as in Case 4 identifies the Bredon cohomology complex with the ordinary cellular cohomology complex of the space

\begin{center}
\begin{tikzpicture}[scale=1]
\node[draw,circle,fill,inner sep=2pt] at (0,0) {};
\node[draw,circle,fill,inner sep=2pt] at (0,2) {};
\node[draw,circle,fill,inner sep=2pt] at (2,0) {};
\node[draw,circle,fill,inner sep=2pt] at (2,2) {};
\draw (0,0) -- (0,2) -- (2,2) -- (2,0) -- (0,0);

\node[draw,circle,fill,inner sep=2pt] at (-.3,0) {};
\node[draw,circle,fill,inner sep=2pt] at (-.3,2) {};
\draw (-.3,0) -- (-.3,2);

\node[draw,circle,fill,inner sep=2pt] at (0,2.3) {};
\node[draw,circle,fill,inner sep=2pt] at (2,2.3) {};
\draw (0,2.3) -- (2,2.3);

\node[draw,circle,fill,inner sep=2pt] at (2.3,2) {};
\node[draw,circle,fill,inner sep=2pt] at (2.3,0) {};
\draw (2.3,2) -- (2.3,0);

\node[draw,circle,fill,inner sep=2pt] at (0,-.3) {};
\node[draw,circle,fill,inner sep=2pt] at (2,-.3) {};
\draw (0,-.3) -- (2,-.3);

\node[draw,circle,fill,inner sep=2pt] at (-.3,2.3) {};
\node[draw,circle,fill,inner sep=2pt] at (2.3,2.3) {};
\node[draw,circle,fill,inner sep=2pt] at (-.3,-.3) {};
\node[draw,circle,fill,inner sep=2pt] at (2.3,-.3) {};

\end{tikzpicture} 
\end{center}

The Bredon cohomology is thus $\Z^9$ in degree $0$, and $0$ in higher degrees.

\paragraph{Case 6: $\order(\sigma_1)>2$, $\sigma_2=1$.}

We have $W=\{1,t\}$. The space $\widetilde{\germ{a}_M^*}$ is the same as in Case 4: a union of horizontal strips. The quotient $\widetilde{X}=\widetilde{\germ{a}_M^*}/\Lambda$ is a cylinder, obtained by gluing together the two vertical sides of the square $[0,2]^2$. 

The nontrivial element $t\in W$ acts on the cylinder by reflection through a plane orthogonal to the axis of rotational symmetry. The top half of the cylinder constitutes a fundamental domain for this action; the stabliser of the middle circle is $\{1,t\}$, while the other points in the cylinder have trivial stabiliser. Using the same decomposition of $\Rep\{1,t\}$ as we did in Case 1, we get an identification between the Bredon cohomology of $\widetilde{X}$  and the ordinary cellular cohomology of the disjoint union of a cylinder and a circle. Thus the Bredon cohomology is $\Z^2$ in degrees $0$ and $1$, and $0$ in higher degrees.

\paragraph{Case 7: $\order(\sigma_1)>2$ and $\order(\sigma_2)=2$.}

We have $W=\{1,t\}$, and since none of the conditions on $\sigma$ in \eqref{eq:r-table} are satisfied we have $(\germ{a}_M^*)'=\emptyset$ and $\widetilde{X}=X$. The symmetry $t$ acts on the torus $X$ by a reflection as in Case 6, and a fundamental domain for this action is a cylinder, in which the two boundary circles have stabiliser $\{1,t\}$ while the other points have trivial stabiliser. Decomposing $\Rep\{1,t\}$ as in Case 1 identifies the Bredon cohomology complex of $X$ with the ordinary cohomology complex of the disjoint union of a cylinder and two circles, and so we find that the Bredon cohomology is isomorphic to $\Z^3$ in degrees $0$ and $1$, and $0$ in higher degrees.

\paragraph{Case 8: $\sigma_1\neq\sigma_2$ and $\order(\sigma_i)>2$.}

Here we have $W=\{1\}$, $(\germ{a}_M^*)'=\emptyset$ and $\widetilde{X}=X$. So the Bredon cohomology of $X$ is, in this case, just the ordinary cellular cohomology of the $2$-torus $X$; thus the cohomology is isomorphic to $\Z$ in degrees $0$ and $2$, isomorphic to $\Z^2$ in degree $1$, and $0$ in higher degrees.

 \bibliographystyle{alpha}
 \bibliography{p-adic.bib}

\end{document}